\newcommand{\addresseshere}{\enddoc@text\let\enddoc@text\relax}
\DeclareMathOperator{\bun}{Bun}
\DeclareMathOperator{\op}{op}
\DeclareMathOperator{\im}{Im}
\DeclareMathOperator{\CCS}{CCS}
\DeclareMathOperator{\Aut}{Aut}
\DeclareMathOperator{\todd}{Todd}
\DeclareMathOperator{\cok}{cok}
\DeclareMathOperator{\ind}{ind}
\DeclareMathOperator{\vol}{vol}
\DeclareMathOperator{\CS}{CS}
\DeclareMathOperator{\odd}{odd}
\DeclareMathOperator{\id}{id}
\DeclareMathOperator{\ch}{ch}
\DeclareMathOperator{\GL}{GL}
\DeclareMathOperator{\End}{End}
\DeclareMathOperator{\ho}{Hom}
\DeclareMathOperator{\rk}{rank}
\DeclareMathOperator{\str}{str}
\DeclareMathOperator{\tr}{tr}
\begin{document}
\theoremstyle{definition}
\newtheorem{coro}{Corollary}
\newtheorem{thm}{Theorem}
\newtheorem{defi}{Definition}
\newtheorem{lemma}{Lemma}
\newtheorem*{claim}{Claim}
\newtheorem{exam}{Example}
\newtheorem{prop}{Proposition}
\newtheorem{remark}{Remark}
\newtheorem{thmx}{Theorem}
\newtheorem{propx}{Proposition}
\renewcommand{\thethmx}{\Alph{thmx}}
\renewcommand{\thepropx}{\Alph{propx}}
\newcommand{\hto}{\hookrightarrow}
\newcommand{\wt}[1]{{\widetilde{#1}}}
\newcommand{\ov}[1]{{\overline{#1}}}
\newcommand{\un}[1]{{\underline{#1}}}
\newcommand{\wh}[1]{{\widehat{#1}}}
\newcommand{\br}[1]{{\breve{#1}}}
\newcommand{\wo}{\mathbin{\wh{\otimes}}}
\newcommand{\deff}[1]{{\bf\emph{#1}}}
\newcommand{\boo}[1]{\boldsymbol{#1}}
\newcommand{\abs}[1]{\lvert#1\rvert}
\newcommand{\norm}[1]{\lVert#1\rVert}
\newcommand{\inner}[1]{\langle#1\rangle}
\newcommand{\poisson}[1]{\{#1\}}
\newcommand{\biginner}[1]{\Big\langle#1\Big\rangle}
\newcommand{\set}[1]{\{#1\}}
\newcommand{\Bigset}[1]{\Big\{#1\Big\}}
\newcommand{\BBigset}[1]{\bigg\{#1\bigg\}}
\newcommand{\dis}[1]{$\displaystyle#1$}
\newcommand{\KER}{\mathbf{Ker}}
\newcommand{\EE}{\mathbf{E}}
\newcommand{\FF}{\mathbf{F}}
\newcommand{\HH}{\mathbf{H}}
\newcommand{\KK}{\mathbf{K}}
\newcommand{\LL}{\mathbf{L}}
\newcommand{\VV}{\mathbf{V}}
\newcommand{\WW}{\mathbf{W}}
\newcommand{\CC}{\mathbf{C}}
\newcommand{\R}{\mathbb{R}}
\newcommand{\N}{\mathbb{N}}
\newcommand{\Z}{\mathbb{Z}}
\newcommand{\Q}{\mathbb{Q}}
\newcommand{\C}{\mathbb{C}}
\newcommand{\aaa}{\mathbb{A}}
\newcommand{\bbb}{\mathbb{B}}
\newcommand{\ttt}{\mathbb{T}}
\newcommand{\sss}{\mathbb{S}}
\newcommand{\E}{\mathcal{E}}
\newcommand{\T}{\mathcal{T}}
\newcommand{\G}{\mathcal{G}}
\newcommand{\F}{\mathcal{F}}
\newcommand{\I}{\mathcal{I}}
\newcommand{\V}{\mathcal{V}}
\newcommand{\W}{\mathcal{W}}
\newcommand{\A}{\mathcal{A}}
\newcommand{\HHH}{\mathcal{H}}
\newcommand{\PP}{\mathcal{P}}
\newcommand{\K}{\mathcal{K}}
\newcommand{\z}{\mathcal{Z}}
\newcommand{\RRR}{\mathscr{R}}
\newcommand{\DDD}{\mathscr{D}}
\newcommand{\KKK}{\mathscr{K}}
\newcommand{\LLL}{\mathscr{L}}
\newcommand{\JJJ}{\mathscr{J}}
\newcommand{\WWW}{\mathscr{W}}
\newcommand{\e}{\mathscr{E}}
\newcommand{\hh}{\mathscr{H}}
\newcommand{\kk}{\mathscr{K}}
\newcommand{\jj}{\mathscr{J}}
\newcommand{\w}{\mathscr{W}}
\newcommand{\f}{\mathscr{F}}
\newcommand{\g}{\mathscr{G}}
\newcommand{\so}{\mathfrak{so}}
\newcommand{\gl}{\mathfrak{gl}}
\newcommand{\DD}{\mathsf{D}}
\newcommand{\ff}{\mathsf{F}}
\newcommand{\bpi}{\boldsymbol{\pi}}
\newcommand{\bee}{\boldsymbol{\e}}
\newcommand{\bhh}{\boldsymbol{\hh}}
\newcommand{\bkk}{\boldsymbol{\kk}}
\newcommand{\bjj}{\boldsymbol{\jj}}
\newcommand{\bLLL}{\boldsymbol{\LLL}}
\newcommand{\bRRR}{\boldsymbol{\RRR}}
\newcommand{\bWWW}{\boldsymbol{\WWW}}
\newcommand{\bSSS}{\boldsymbol{S}}
\numberwithin{equation}{section}
\numberwithin{thm}{section}
\numberwithin{lemma}{section}
\numberwithin{coro}{section}
\numberwithin{prop}{section}
\numberwithin{remark}{section}
\numberwithin{defi}{section}
\normalsize
\title[]{Local index theory and $\Z/k\Z$ $K$-theory}
\author{Man-Ho Ho}
\email{homanho@bu.edu}
\address{Hong Kong}
\subjclass[2020]{Primary 19K56, 19L10; Secondary 19L50, 58J20}
\keywords{Riemann--Roch--Grothendieck theorem, analytic index, Bismut--Cheeger eta form, $\Z/k\Z$ $K$-theory}

\nocite{*}
\begin{abstract}
For any given submersion $\pi:X\to B$ with closed, oriented and spin$^c$ fibers of even dimension, equipped with a Riemannian and differential spin$^c$ structure, we apply the Atiyah--Singer--Gorokhovsky--Lott approach to the local family index theorem without the kernel bundle assumption to construct an analytic index $\ind^a_k$ in odd $\Z/k\Z$ $K$-theory at the cocycle level. This is achieved by associating to every cocycle $(\EE, \FF, \alpha)$ of the odd $\Z/k\Z$ $K$-theory group of $X$ a cocycle $\ind^a_k(\EE, \FF, \alpha)$ of the odd $\Z/k\Z$ $K$-theory group of $B$. We also prove a Riemann--Roch--Grothendieck-type formula in odd $\Z/k\Z$ $K$-theory, which expresses the Cheeger--Chern--Simons form of $\ind^a_k(\EE, \FF, \alpha)$ in terms of that of $(\EE, \FF, \alpha)$. Furthermore, we show that the analytic index $\ind^a_k$ and the Riemann--Roch--Grothendieck-type formula in odd $\Z/k\Z$ $K$-theory refine the underlying geometric bundle of the analytic index and the Riemann--Roch--Grothendieck theorem in $\R/\Z$ $K$-theory, respectively.
\end{abstract}
\maketitle
\section{Introduction}\label{s 1}

\subsection{Main results}\label{s 1.1}

Let $k$ be a positive integer. Atiyah--Patodi--Singer define the $\Z/k\Z$ $K$-theory group \cite[\S5]{APS75} as
$$K(X; \Z/k\Z):=K(X\times M_k; X\times\set{\ast}),$$
where $M_k$ is the homotopy cofiber of the map $f_k:\sss^1\to\sss^1$ given by $f_k(z)=z^k$. Elements of the odd $\Z/k\Z$ $K$-theory group $K^{-1}(X; \Z/k\Z)$ can be represented by triples of the form
$$(E, F, \alpha),$$
where $E\to X$ and $F\to X$ are complex vector bundles and $\alpha:kE\to kF$ is a bundle isomorphism \cite[Proposition 5.5]{APS75}. Alternatively, the odd $\Z/k\Z$ $K$-theory group can also be defined as the $K$-theory group of a functor due to Karoubi \cite[6.18 of Chapter II]{K08}. Denote by $\bun(X)$ the category of complex vector bundles over $X$ with morphisms bundle homomorphisms. Define a functor $\varphi_k:\bun(X)\to\bun(X)$ by
$$\varphi_k(E)=kE,\qquad\varphi_k(T)=kT,$$
where $E\to X$ is a complex vector bundle and $T:E\to F$ is a bundle homomorphism. The odd $\Z/k\Z$ $K$-theory group $K^{-1}(X; \Z/k\Z)$ is defined to be $K(\varphi_k)$ \cite[2.13 of Chapter II]{K08}.

In this paper, for any given submersion $\pi:X\to B$ with closed, oriented and spin$^c$ fibers of even dimension, equipped with a Riemannian and differential spin$^c$ structure, we construct an analytic index in odd $\Z/k\Z$ $K$-theory at the cocycle level. That is, for any triple $(E, F, \alpha)$ representing an element of $K^{-1}(X; \Z/k\Z)$, we define an analytic index $\ind^a_k(E, F, \alpha)$ as an element of $K^{-1}(B; \Z/k\Z)$.

To define an analytic index in odd $\Z/k\Z$ $K$-theory using techniques in local index theory, we need to consider an odd $\Z/k\Z$ $K$-theory group defined in terms of a category of complex vector bundles which takes metrics and connections into account (see also Remark \ref{remark 6.1}). For this purpose, we consider a category $\bun_\nabla(X)$ of geometric bundles $\EE=(E, g^E, \nabla^E)$ over $X$, where $E\to X$ is a complex vector bundle equipped with a Hermitian metric $g^E$ and a unitary connection $\nabla^E$. Given any two geometric bundles $\EE$ and $\FF$, a morphism $T:\EE\to\FF$ in $\bun_\nabla(X)$ is an isometric isomorphism $T:(E, g^E)\to(F, g^F)$ of the underlying Hermitian bundles. Define a functor $\varphi_{k, \nabla}:\bun_\nabla(X)\to\bun_\nabla(X)$ by
$$\varphi_{k, \nabla}(\EE)=k\EE,\qquad\varphi_{k, \nabla}(T)=kT,$$
where $\EE$ is a geometric bundle and $T:\EE\to\FF$ is a morphism. Throughout this paper, we work with the odd $\Z/k\Z$ $K$-theory group defined by $K(\varphi_{k, \nabla})$, and still denote it as $K^{-1}(X; \Z/k\Z)$. Elements of $K^{-1}(X; \Z/k\Z)$ are represented by triples $(\EE, \FF, \alpha)$, which are called $k$-cocycles over $X$, where $\alpha:k\EE\to k\FF$ is a morphism. If $\EE$ and $\FF$ are $\Z_2$-graded and $\alpha$ is even, we say the $k$-cocycle $(\EE, \FF, \alpha)$ is $\Z_2$-graded.

Let $\pi:X\to B$ be a submersion with closed, oriented and spin$^c$ fibers of even dimension, equipped with a Riemannian and differential spin$^c$ structure $\bpi$. In this paper, by employing the Atiyah--Singer--Gorokhovsky--Lott (ASGL) approach to the local family index theorem (FIT) without the kernel bundle assumption \cite[\S4, 5]{GL18} (the kernel bundle assumption refers to the family of kernels $\set{\ker(\DD^{S\otimes E}_b)}_{b\in B}$ of the twisted spin$^c$ Dirac operator $\DD^{S\otimes E}$ is assumed to form a ($\Z_2$-graded) complex vector bundle $\ker(\DD^{S\otimes E})\to B$), we associate to each $k$-cocycle $(\EE, \FF, \alpha)$ over $X$ a $\Z_2$-graded $k$-cocycle over $B$:
\begin{equation}\label{eq 1.1}
\ind^a_k(\EE, \FF, \alpha):=\big(\KER\big(\DD^{S\otimes E; \wt{\Delta}_E}\big), \KER\big(\DD^{S\otimes F; \wt{\Delta}_F}\big), \wt{h}\big),
\end{equation}
where
\begin{itemize}
  \item $\DD^{S\otimes E}$ and $\DD^{S\otimes F}$ are the twisted spin$^c$ Dirac operators associated to $(\EE, \bpi)$ and $(\FF, \bpi)$, respectively, which are not assumed to satisfy the kernel bundle assumption,
  \item $\KER\big(\DD^{S\otimes E; \wt{\Delta}_E}\big)$ and $\KER\big(\DD^{S\otimes F; \wt{\Delta}_F}\big)$ are $\Z_2$-graded geometric bundles over $B$, called geometric kernel bundles in this paper, whose underlying bundles are bundle representatives of the analytic indexes of $[E]$ and $[F]$ in $K$-theory, respectively, and
  \item $\wt{h}:k\KER\big(\DD^{S\otimes E; \wt{\Delta}_E}\big)\to k\KER\big(\DD^{S\otimes F; \wt{\Delta}_F}\big)$ is a $\Z_2$-graded morphism.
\end{itemize}
We call $\ind^a_k(\EE, \FF, \alpha)$ the analytic index of $(\EE, \FF, \alpha)$. The details are given in \S\ref{s 6.2}.

We would like to think of $\ind^a_k$ as the analytic index in odd $\Z/k\Z$ $K$-theory at the cocycle level. However, we do not claim the analytic index $\ind^a_k$ (\ref{eq 1.1}) descends to a well defined group homomorphism
$$\ind^a_{\Z/k\Z}:K^{-1}(X; \Z/k\Z)\to K^{-1}(B; \Z/k\Z),$$
which was the original purpose of this paper. See Remark \ref{remark 6.2} for a detailed discussion.

We associate to every $k$-cocycle $(\EE, \FF, \alpha)$ a differential form $\CCS_k(\EE, \FF, \alpha)$ of odd degree, called the Cheeger--Chern--Simons form of $(\EE, \FF, \alpha)$. The first main result is a Riemann--Roch--Grothendieck (RRG) type formula in odd $\Z/k\Z$ $K$-theory (Theorem \ref{thm 6.1}), which expresses the Cheeger--Chern--Simons form of the analytic index $\ind^a_k(\EE, \FF, \alpha)$ in terms of that of $(\EE, \FF, \alpha)$.
\begin{thmx}\label{thm A}
Let $\pi:X\to B$ be a submersion with closed, oriented and spin$^c$ fibers of even dimension, equipped with a Riemannian and differential spin$^c$ structure. For every $k$-cocycle $(\EE, \FF, \alpha)$ over $X$,
\begin{displaymath}
\begin{split}
\CCS_k\big(\ind^a_k(\EE, \FF, \alpha)\big)&=\int_{X/B}\todd(\nabla^{T^VX})\wedge\CCS_k(\EE, \FF, \alpha)\\
&\qquad+\wt{\eta}(\EE, \bpi, \HH_E, s_E)-\wt{\eta}(\FF, \bpi, \HH_F, s_F)
\end{split}
\end{displaymath}
in $\frac{\Omega^{\odd}(B)}{\im(d)}$, where $\wt{\eta}(\FF, \bpi, \HH_F, s_F)$ and $\wt{\eta}(\EE, \bpi, \HH_E, s_E)$ are the corresponding Bismut--Cheeger eta forms.
\end{thmx}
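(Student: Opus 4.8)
The plan is to reduce the identity to a single Chern--Simons transgression formula on $B$ and to establish that formula by applying the Atiyah--Singer--Gorokhovsky--Lott local family index theorem without the kernel bundle assumption to an interpolating family over $B\times[0,1]$. Unwinding the definition of $\CCS_k$ from \S\ref{s 6.2}, the Cheeger--Chern--Simons form of a $k$-cocycle is, modulo $\im(d)$, equal to $\frac1k$ times the Chern--Simons transgression relating the $k$-fold connection to the connection pulled back along the structure morphism; hence $\CCS_k(\EE, \FF, \alpha)\equiv\frac1k\CS\big(k\nabla^E, \alpha^*(k\nabla^F)\big)$ and, writing $\KER_E:=\KER(\DD^{S\otimes E;\wt{\Delta}_E})$, $\KER_F:=\KER(\DD^{S\otimes F;\wt{\Delta}_F})$, $\CCS_k\big(\ind^a_k(\EE, \FF, \alpha)\big)\equiv\frac1k\CS\big(k\nabla^{\KER_E}, \wt{h}^*(k\nabla^{\KER_F})\big)$, with $\ch$ and $\CS$ taken with the supertrace on the $\Z_2$-graded geometric kernel bundles. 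Since fiber integration against the closed form $\todd(\nabla^{T^VX})$ commutes with $d$ over the closed fibers, the statement is equivalent, after multiplying by $k$, to
\begin{displaymath}
\begin{split}
\CS\big(k\nabla^{\KER_E}, \wt{h}^*(k\nabla^{\KER_F})\big)&\equiv\int_{X/B}\todd(\nabla^{T^VX})\wedge\CS\big(k\nabla^E, \alpha^*(k\nabla^F)\big)\\
&\quad+k\,\wt{\eta}(\EE, \bpi, \HH_E, s_E)-k\,\wt{\eta}(\FF, \bpi, \HH_F, s_F)
\end{split}
\end{displaymath}
in $\Omega^{\odd}(B)/\im(d)$. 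That both sides have the same differential follows from conjugation invariance of $\ch$ and two applications of the eta-form transgression $\ch(\nabla^{\KER_E})=\int_{X/B}\todd(\nabla^{T^VX})\wedge\ch(\nabla^E)-d\,\wt{\eta}(\EE, \bpi, \HH_E, s_E)$ established earlier in the paper; but equality in $\Omega^{\odd}(B)/\im(d)$ is stronger than equality of differentials, so this only orients the argument, and the real task is to exhibit a primitive.

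To that end, let $\pi_I:X\times[0,1]\to B\times[0,1]$ be obtained from $\pi$ by crossing with $[0,1]$, carrying the product Riemannian and differential spin$^c$ structure $\bpi_I$, so that $\todd(\nabla^{T^V(X\times[0,1])})$ is pulled back from $\todd(\nabla^{T^VX})$. Over $X\times[0,1]$ let $\G$ be the geometric bundle with underlying bundle the pullback of $kE$ and connection the linear path $(1-t)\,k\nabla^E+t\,\alpha^*(k\nabla^F)$; it is $(kE, k\nabla^E)$ at $t=0$ and is isometric via $\alpha$ to $(kF, k\nabla^F)$ at $t=1$. I would choose the ASGL perturbation data $(\HH_\G, s_\G)$ for $(\G, \bpi_I)$ so that at $t=0$ it recovers the data defining the geometric kernel bundle of $k\EE$ (built from $(\HH_E, s_E)$) and at $t=1$ the $\alpha$-transport of the data defining that of $k\FF$ (built from $(\HH_F, s_F)$); this can be arranged, after a homotopy of perturbations contributing only exact forms, so that the resulting $\Z_2$-graded geometric kernel bundle $\KER_\G:=\KER(\DD^{S\otimes\G;\wt{\Delta}_\G})$ over $B\times[0,1]$ restricts to $k\KER_E$ at $t=0$, to $k\KER_F$ at $t=1$, and so that the induced endpoint identification is exactly the morphism $\wt{h}$ used in \S\ref{s 6.2} to define $\ind^a_k(\EE, \FF, \alpha)$. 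The local family index theorem without the kernel bundle assumption applied to $(\G, \bpi_I)$ then gives, in $\Omega^{\even}(B\times[0,1])$,
\begin{displaymath}
\ch(\nabla^{\KER_\G})=\int_{(X\times[0,1])/(B\times[0,1])}\todd(\nabla^{T^V(X\times[0,1])})\wedge\ch(\nabla^\G)-d\,\wt{\eta}(\G, \bpi_I, \HH_\G, s_\G).
\end{displaymath}

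It then remains to integrate this identity over the fiber $[0,1]$ of $B\times[0,1]\to B$ and match the three resulting terms, using $\int_{[0,1]}d\omega\equiv\omega|_{t=1}-\omega|_{t=0}\pmod{\im(d)}$. By the standard transgression identity — the $dt$-component of the Chern character of a path of connections is the Chern--Simons form — together with the endpoint identification, the fiber integral of $\ch(\nabla^{\KER_\G})$ is congruent to $\CS\big(k\nabla^{\KER_E}, \wt{h}^*(k\nabla^{\KER_F})\big)$; by Fubini and the same identity applied fiberwise over $X$ (the Todd form being pulled back from $X$), the fiber integral of the middle term equals $\int_{X/B}\todd(\nabla^{T^VX})\wedge\CS\big(k\nabla^E, \alpha^*(k\nabla^F)\big)$; and the fiber integral of $d\,\wt{\eta}(\G, \bpi_I, \HH_\G, s_\G)$ is congruent to $\wt{\eta}|_{t=1}-\wt{\eta}|_{t=0}=k\,\wt{\eta}(\FF, \bpi, \HH_F, s_F)-k\,\wt{\eta}(\EE, \bpi, \HH_E, s_E)$, by additivity of the Bismut--Cheeger eta form under $k$-fold sums and its naturality under the isometry $\alpha$. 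Combining these three (with the sign conventions of the paper) gives the Chern--Simons identity displayed above; dividing by $k$ and recombining with $\todd(\nabla^{T^VX})$ yields Theorem \ref{thm A}.

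The step I expect to be the main obstacle is the coherent choice of perturbation data along the interpolation. The ASGL perturbations $(\HH_E, s_E)$ and $(\HH_F, s_F)$ turning the two families of fiberwise kernels into honest bundles over $B$ are chosen independently, so the data $(\HH_\G, s_\G)$ for the family over $B\times[0,1]$ must be built so that (i) its restrictions to the two ends are precisely the data entering the two Bismut--Cheeger eta forms in the statement, (ii) the induced endpoint identification of geometric kernel bundles is precisely the morphism $\wt{h}$ defining $\ind^a_k(\EE, \FF, \alpha)$ in \S\ref{s 6.2}, and (iii) any intermediate homotopies of perturbations contribute only exact forms, so the identity persists in $\Omega^{\odd}(B)/\im(d)$. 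Making (i)--(iii) precise requires the constructions of \S\ref{s 6.2}, in particular that of $\wt{h}$, to be phrased in a way compatible with the form level; once that is done, the rest is a routine application of Stokes' theorem to the Chern character of the interpolating kernel bundle.
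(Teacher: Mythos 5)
Your overall strategy — interpolate over $B\times I$, apply the local FIT without the kernel bundle assumption to the interpolating family, and fiber-integrate over $I$ — is exactly the engine behind the paper's proof. The paper packages it as Proposition \ref{prop 5.4} (itself assembled from Propositions \ref{prop 5.2} and \ref{prop 5.3}), applies it to $\alpha:k\EE\to k\FF$ together with the additivity statement (Proposition \ref{prop 5.5}) and the stabilization Lemma \ref{lemma 5.1}, and then divides by $k$; your reduction to the single Chern--Simons identity after multiplying by $k$ matches (\ref{eq 6.23}).

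The genuine gap is at the step you yourself flag as the main obstacle, and your resolution of it is asserted rather than proved. You claim the perturbation data $(\HH_\G, s_\G)$ over $B\times I$ "can be arranged" so that its restrictions at $t=0,1$ are precisely the data built from $(\HH_E,s_E)$ and $(\HH_F,s_F)$ and so that the induced endpoint identification is exactly the morphism $\wt{h}$ of \S\ref{s 6.2}. The paper explicitly does \emph{not} do this, and Remark \ref{remark 6.2} explains why: given the two endpoint perturbations there does not appear to be a canonical path of pairs satisfying the AS property joining them, and different admissible choices a priori yield different endpoint identifications. What the paper does instead is choose an \emph{independent} pair $(\kk,\wt{r})$ for the interpolating family, restrict it to obtain pairs $(K_0,r_0)$, $(K_1,r_1)$ at the endpoints, and then compare these with the given $(H_E,s_E)$, $(H_F,s_F)$ via Proposition \ref{prop 5.1} — which is itself another interpolation, over the affine path $S_t=(1-t)(s_0\oplus 0)+t(0\oplus s_1)$ of stabilized perturbations on $H_0\oplus H_1$, whose surjectivity must be (and is) checked. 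The resulting Chern--Simons correction terms do not vanish; they telescope, as in (\ref{eq 5.46}) and (\ref{eq 6.22}), precisely because $\wt{h}$ is \emph{defined} in \S\ref{s 6.2} as the composite of the endpoint-comparison morphisms $P_{s_0r_0}$, $P_{01}$, $P_{s_1r_1}^{-1}$ together with $\wh{h}_E$, $\wh{h}_F$, $\wh{\alpha}$ and the trivial padding $\wh{\CC}^{m+\ell}$, $\wh{\CC}^\ell$ of Lemma \ref{lemma 5.1}. So to make your argument complete you cannot treat $\wt{h}$ as something your interpolation happens to produce; you must unwind its actual definition and show that each factor contributes the matching Chern--Simons term (the padding contributing zero by (\ref{eq 2.12})), which is the content of (\ref{eq 6.19})--(\ref{eq 6.22}). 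A secondary omission: your "homotopy of perturbations contributing only exact forms" needs the surjectivity of the interpolated perturbation along the whole homotopy, which is where the stabilization $H_0\oplus H_1$ enters and is not automatic for an arbitrary path.
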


One motivation of defining the analytic index $\ind^a_k$ (\ref{eq 1.1}) and proving the RRG-type formula in odd $\Z/k\Z$ $K$-theory (Theorem \ref{thm A}) comes from the analytic index and the RRG theorem in $\R/\Z$ $K$-theory and the fact that $\R/\Z$ $K$-theory is defined in terms of $\Z/k\Z$ $K$-theory \cite[p.88]{APS76}. To put these results into context, we recall the analytic index and the RRG theorem in $\R/\Z$ $K$-theory. The geometric model of $\R/\Z$ $K$-theory $K^{-1}(X; \R/\Z)$ given by Lott \cite[Definition 7]{L94} can be defined in terms of $\Z_2$-graded generators of the form $\E=(\EE, \omega)$, where $\EE$ is a $\Z_2$-graded geometric bundle and $\omega\in\frac{\Omega^{\odd}(X)}{\im(d)}$ satisfies
\begin{equation}\label{eq 1.2}
\ch(\nabla^{E, +})-\ch(\nabla^{E, -})=-d\omega.
\end{equation}
Moreover, for any given submersion $\pi:X\to B$ with closed, oriented and spin$^c$ fibers of even dimension, equipped with a Riemannian and differential spin$^c$ structure $\bpi$, Lott defines an analytic index
$$\ind^a_{\R/\Z}:K^{-1}(X; \R/\Z)\to K^{-1}(B; \R/\Z)$$
in $\R/\Z$ $K$-theory \cite[Definition 14]{L94} by
$$\ind^a_{\R/\Z}(\E)=\bigg(\KER(\DD^{S\otimes E}), \int_{X/B}\todd(\nabla^{T^VX})\wedge\omega+\wh{\eta}(\EE, \bpi)\bigg),$$
where, for the purpose of illustration, the twisted spin$^c$ Dirac operator $\DD^{S\otimes E}$ associated to $(\EE, \bpi)$ is assumed to satisfy the kernel bundle assumption and $\wh{\eta}(\EE, \bpi)$ is the corresponding Bismut--Cheeger eta form. He then proves the FIT in $\R/\Z$ $K$-theory \cite[Corollary 3]{L94}, namely,
$$\ind^a_{\R/\Z}=\ind^t_{\R/\Z},$$
where $\ind^t_{\R/\Z}:K^{-1}(X; \R/\Z)\to K^{-1}(B; \R/\Z)$ is the topological index in $\R/\Z$ $K$-theory. By applying the Chern character $\ch_{\R/\Q}:K^{-1}(B; \R/\Z)\to H^{\odd}(B; \R/\Q)$ in $\R/\Z$ $K$-theory \cite[Definition 9]{L94} to the above FIT, he obtains the RRG theorem in $\R/\Z$ $K$-theory \cite[Corollary 4]{L94}, which states that
\begin{equation}\label{eq 1.3}
\ch_{\R/\Q}\big(\ind^a_{\R/\Z}(\E)\big)=\int_{X/B}\todd(T^VX)\cup\ch_{\R/\Q}(\E)
\end{equation}
in $H^{\odd}(B; \R/\Q)$.

Another motivation comes from our proof of the RRG theorem in $\R/\Z$ $K$-theory \cite[Theorem 1.5]{H23}. We wonder whether the analytic index and the RRG-type formula in odd $\Z/k\Z$ $K$-theory refine their analogues in $\R/\Z$ $K$-theory in a certain sense. The second main result is an affirmative answer to this question. 

We associate to each ($\Z_2$-graded) $k$-cocycle $(\EE, \FF, \alpha)$ in $K^{-1}(X; \Z/k\Z)$ a $\Z_2$-graded generator $T(\EE, \FF, \alpha)$ in $K^{-1}(X; \R/\Z)$ (see (\ref{eq 6.2}) and (\ref{eq 6.3}) for the details). We prove that $T$ descends to a group homomorphism (Proposition \ref{prop 6.1})
$$T:K^{-1}(X; \Z/k\Z)\to K^{-1}(X; \R/\Z).$$ 
The second main result (Propositions \ref{prop 6.2} and \ref{prop 6.3}) can now be stated more precisely as follows.
\begin{propx}\label{prop A}
Let $\pi:X\to B$ be a submersion with closed, oriented and spin$^c$ fibers of even dimension, equipped with a Riemannian and differential spin$^c$ structure.
\begin{enumerate}
\item For every $k$-cocycle $(\EE, \FF, \alpha)$ over $X$,
    $$T\big(\ind^a_k(\EE, \FF, \alpha)\big)=\ind^a_{\R/\Z}\big(T(\EE, \FF, \alpha)\big).$$
\item Let $\E=(\EE, \omega)$ be a $\Z_2$-graded generator of $K^{-1}(X; \R/\Z)$. Fix a positive integer $k$ satisfying $k\EE^+\cong k\EE^-$ and a morphism $\alpha:k\EE^+\to k\EE^-$ so that $(\EE^+, \EE^-, \alpha)$ is a $k$-cocycle. The RRG-type formula in odd $\Z/k\Z$ $K$-theory for $(\EE^+, \EE^-, \alpha)$ is a refinement of the RRG theorem in $\R/\Z$ $K$-theory for $\E$ at the differential form level.
\end{enumerate}
\end{propx}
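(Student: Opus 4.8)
The plan is to deduce part~(1) from Theorem~A together with the additivity of the geometric kernel bundles and of the Bismut--Cheeger eta forms under direct sums, and then to obtain part~(2) by transporting part~(1) and Theorem~A along the $\R/\Z$ Chern character $\ch_{\R/\Q}$ and comparing with Lott's Riemann--Roch--Grothendieck theorem (\ref{eq 1.3}).

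For part~(1), the first step is to unwind the definition of $T$ from (\ref{eq 6.2})--(\ref{eq 6.3}): the generator $T(\EE,\FF,\alpha)\in K^{-1}(X;\R/\Z)$ has underlying $\Z_2$-graded geometric bundle $\EE\oplus\FF^\op$ (with $\FF$ entering with reversed $\Z_2$-grading, so in odd degree when $\FF$ is ungraded) and correction form $\CCS_k(\EE,\FF,\alpha)$, which satisfies $\ch(\nabla^E)-\ch(\nabla^F)=-d\,\CCS_k(\EE,\FF,\alpha)$ (and the evident variant when $\EE,\FF$ are themselves $\Z_2$-graded). Applying $T$ to the index cocycle (\ref{eq 1.1}) yields the generator over $B$ with $\Z_2$-graded bundle $\KER(\DD^{S\otimes E;\wt\Delta_E})\oplus\KER(\DD^{S\otimes F;\wt\Delta_F})^\op$ and correction form $\CCS_k\big(\ind^a_k(\EE,\FF,\alpha)\big)$. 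On the other side, evaluating Lott's formula for $\ind^a_{\R/\Z}$ in the ASGL (no kernel bundle) form on $T(\EE,\FF,\alpha)$ gives the generator whose underlying bundle is the geometric kernel bundle of the twisted spin$^c$ Dirac operator associated to $(\EE\oplus\FF^\op,\bpi)$ and whose correction form is $\int_{X/B}\todd(\nabla^{T^VX})\wedge\CCS_k(\EE,\FF,\alpha)+\wh\eta(\EE\oplus\FF^\op,\bpi,\dots)$. The comparison now rests on two additivity facts: (i) the geometric kernel bundle of the Dirac operator twisted by $\EE\oplus\FF^\op$ is $\KER(\DD^{S\otimes E;\wt\Delta_E})\oplus\KER(\DD^{S\otimes F;\wt\Delta_F})^\op$, once the perturbation datum for $\EE\oplus\FF^\op$ is chosen as the direct sum of those for $\EE$ and for $\FF$; and (ii) the Bismut--Cheeger eta form is additive, $\wh\eta(\EE\oplus\FF^\op,\bpi,\dots)=\wt\eta(\EE,\bpi,\HH_E,s_E)-\wt\eta(\FF,\bpi,\HH_F,s_F)$, for the same compatible choices. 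Granting~(i), the two underlying $\Z_2$-graded bundles coincide on the nose; granting~(ii), the equality of the correction forms in $\Omega^{\odd}(B)/\im(d)$ is exactly Theorem~A. Finally, since the class of $\ind^a_{\R/\Z}(\,\cdot\,)$ does not depend on the auxiliary perturbation data (as one must show in order to make $\ind^a_{\R/\Z}$ well defined), one may pass from compatible choices to arbitrary ones, so the two sides represent the same class in $K^{-1}(B;\R/\Z)$.

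For part~(2), the point is that Theorem~A is an identity in $\Omega^{\odd}(B)/\im(d)$ while (\ref{eq 1.3}) is an identity in $H^{\odd}(B;\R/\Q)$, and the former maps onto the latter. The step to isolate is that $\ch_{\R/\Q}\circ T$ is computed by the Cheeger--Chern--Simons form reduced modulo $\Q$ --- i.e.\ the $\Z/k\Z$ Chern character made explicit at the cocycle level --- which one reads off from the definitions of $\ch_{\R/\Q}$ and of $T$, using that $\alpha$ itself furnishes the $k$-fold isomorphism $kE\cong kF$ witnessing that $[E]-[F]$ is $K^0$-torsion and that, accordingly, $\CCS_k(\EE,\FF,\alpha)$ is built from a Chern--Simons form of the pair $(\nabla^{kE},\alpha^*\nabla^{kF})$. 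Applying this reduction over $X$ and over $B$ to the RRG-type formula of Theorem~A for $(\EE^+,\EE^-,\alpha)$ and invoking part~(1), the identity becomes
\[
\ch_{\R/\Q}\big(\ind^a_{\R/\Z}(T(\EE^+,\EE^-,\alpha))\big)=\int_{X/B}\todd(T^VX)\cup\ch_{\R/\Q}\big(T(\EE^+,\EE^-,\alpha)\big),
\]
which is precisely (\ref{eq 1.3}) for the generator $T(\EE^+,\EE^-,\alpha)$. Since $\E=(\EE,\omega)$ and $T(\EE^+,\EE^-,\alpha)=(\EE,\CCS_k(\EE^+,\EE^-,\alpha))$ have the \emph{same} underlying $\Z_2$-graded geometric bundle and correction forms differing only by a closed odd form, the two sides of (\ref{eq 1.3}) change by the same quantity when $T(\EE^+,\EE^-,\alpha)$ is replaced by $\E$; hence (\ref{eq 1.3}) holds for $\E$ as well (as it must, Lott's theorem being valid for every generator), and the form-level identity of Theorem~A projects onto, and is strictly finer than, the cohomological identity (\ref{eq 1.3}) for $\E$. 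This is the assertion of part~(2).

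I expect the main obstacle to lie in the bookkeeping behind (i) and (ii) of part~(1): one must check that the geometric kernel bundle construction and the construction of $\wt\eta$ in the ASGL setting are genuinely additive under direct sums for the \emph{same} fibrewise perturbation datum, and that the reversed $\Z_2$-grading on the $\FF$-summand is carried correctly through every ingredient of the geometric kernel bundle --- the chiral splitting $\DD^\pm$, the signs in the super-trace, and the induced metric and connection --- so that it really enters with the minus sign in~(ii). Once this additivity is secured, both parts follow formally from Theorem~A, from Lott's Riemann--Roch--Grothendieck theorem, and from the independence statements already established for $\ind^a_{\R/\Z}$.
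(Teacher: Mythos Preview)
Your proposal is essentially correct and follows the same overall strategy as the paper: for part~(1), compute both $T(\ind^a_k(\EE,\FF,\alpha))$ and $\ind^a_{\R/\Z}(T(\EE,\FF,\alpha))$ explicitly and match them via Theorem~A; for part~(2), recognise the RRG-type formula as a form-level lift of~(\ref{eq 1.3}).

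There is one place where you are a bit quick. When you write ``evaluating Lott's formula for $\ind^a_{\R/\Z}$ in the ASGL (no kernel bundle) form'', you are implicitly using Corollary~\ref{coro 6.1}, whose proof requires Proposition~\ref{prop B} (the equivalence $\wt{\ind}^a_{\wh{K}}=\ind^a_{\wh{K}}$). Lott's $\ind^a_{\R/\Z}$ is defined via the MFFL approach, so before your additivity facts (i)--(ii) (which are Proposition~\ref{prop 5.5}) become relevant, you must first know that the ASGL expression computes the same class. The paper routes this through Proposition~\ref{prop 4.1} rather than through Proposition~\ref{prop 5.5} alone; your argument needs this ingredient too. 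A second, minor bookkeeping point: $\ind^a_k(\EE,\FF,\alpha)$ involves the stabilised kernel bundles $\KER(\DD^{S\otimes E;\wt\Delta_E})=\KER(\DD^{S\otimes E;\Delta_E})\oplus\wh{\CC}^{m+\ell}$ rather than $\KER(\DD^{S\otimes E;\Delta_E})$ itself, so the underlying bundles of the two sides agree only after absorbing elementary summands $(\wh{\CC}^r,0)$ --- exactly as the paper does in (\ref{eq 6.30}).

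Your treatment of part~(2) differs mildly from the paper's. The paper proves Proposition~\ref{prop 6.2} directly by adding and subtracting $\int_{X/B}\todd(\nabla^{T^VX})\wedge\omega$ to the RRG-type formula and identifying the resulting blocks with the definition (\ref{eq 6.26}) of $\ch_{\R/\Q}$; it does not invoke part~(1). You instead first establish~(\ref{eq 1.3}) for the auxiliary generator $T(\EE^+,\EE^-,\alpha)$ via part~(1), and then transfer to $\E$ using that $\omega-\CCS_k(\EE^+,\EE^-,\alpha)$ is closed. Both routes are valid and yield the same conclusion; the paper's is slightly more concrete (it exhibits the actual differential-form representatives), while yours makes the logical dependence on Theorem~A and part~(1) more transparent.
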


Since $\Z/k\Z$ $K$-theory is a 2-periodic cohomology theory \cite[p.428]{APS75}, it follows from \cite[Chapter 1D]{D69} that there is a topological index 
$$\ind^t_{\Z/k\Z}:K^{-1}(X; \Z/k\Z)\to K^{-1}(B; \Z/k\Z).$$ 
It would be interesting to prove the FIT in odd $\Z/k\Z$ $K$-theory, i.e. 
$$\ind^a_k=\ind^t_{\Z/k\Z}.$$ 
In particular, this would imply the analytic index $\ind^a_k$ is a well defined group homomorphism.

\subsection{Method of proof}\label{s 1.2}

In this subsection, we make some remarks on the method we use to construct the analytic index $\ind^a_k$ in odd $\Z/k\Z$ $K$-theory, namely, the ASGL approach to the local FIT without the kernel bundle assumption.

In this paper, we employ the ASGL approach to construct the analytic index $\ind^a_k$ and prove the RRG-type formula in odd $\Z/k\Z$ $K$-theory, while we employ the Mi\v s\v cenko--Fomenko--Freed--Lott (MFFL) approach \cite[\S7]{FL10} to derive the main results in \cite{H20, H23}. One might ask why we adopt the ASGL approach instead of the MFFL approach in this paper. The answer turns out to be somewhat subtle.

First, as an intermediate result (Proposition \ref{prop 4.1}), we give a proof that the two approaches are equivalent in differential $K$-theory.
\begin{propx}\label{prop B}
Let $\pi:X\to B$ be a submersion with closed, oriented and spin$^c$ fibers of even dimension, equipped with a Riemannian and differential spin$^c$ structure. Denote by $\wt{\ind}^a_{\wh{K}}$ and $\ind^a_{\wh{K}}$ the analytic indexes in differential $K$-theory defined by the ASGL and MFFL approaches, respectively. Then
$$\wt{\ind}^a_{\wh{K}}=\ind^a_{\wh{K}}:\wh{K}(X)\to\wh{K}(B),$$
where $\wh{K}$ is the geometric model of differential $K$-theory given by Freed--Lott \cite[Definition 2.16]{FL10}.
\end{propx}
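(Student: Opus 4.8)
The plan is to show that both analytic indices, when applied to a generator of $\wh{K}(X)$, yield differential $K$-theory classes on $B$ that agree. A generator of $\wh{K}(X)$ in the Freed--Lott model is a triple $(\EE, \phi)$ with $\EE$ a geometric bundle and $\phi \in \Omega^{\even}(B)/\im(d)$ (after pushing to $B$; over $X$ it is $\Omega^{\odd}(X)/\im(d)$), so it suffices to compare the two constructions on such a triple. The MFFL approach produces $\ind^a_{\wh{K}}(\EE, \phi)$ by embedding $X \hto B \times \sss^N$, taking the associated push-forward geometric bundle together with a Mi\v s\v cenko--Fomenko correction, and recording the Bismut--Cheeger eta form as the differential-form part; the ASGL approach instead works directly with the infinite-dimensional bundle of fiberwise Dirac operators, uses Gorokhovsky--Lott's perturbation $\wt{\Delta}_E$ to replace the kernel family by an honest $\Z_2$-graded geometric bundle $\KER(\DD^{S\otimes E;\wt{\Delta}_E})$, and records a rescaled eta-type form. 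The first step is to recall both definitions precisely from \cite{FL10, GL18} and to fix the comparison object: a \emph{relative} geometric data, i.e. a path of Bismut superconnections interpolating the two constructions.

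The key steps, in order, are as follows. First I would show that the two underlying geometric bundles over $B$ — the MFFL push-forward bundle and the ASGL geometric kernel bundle $\KER(\DD^{S\otimes E;\wt{\Delta}_E})$ — represent the same class in topological $K$-theory of $B$; this is standard, since both compute the analytic index $\ind(\DD^{S\otimes E}) \in K^0(B)$. Second, and this is the crux, I would compare the differential-form refinements: one must show that the difference of the two eta-type forms (the Bismut--Cheeger eta form appearing in MFFL versus the Gorokhovsky--Lott rescaled eta form appearing in ASGL), together with the Chern--Simons form measuring the discrepancy between the two choices of metric and connection on the (topologically identified) index bundles, is exact — i.e. vanishes in $\Omega^{\odd}(B)/\im(d)$. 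The natural tool here is a transgression / variation argument: interpolate between the two superconnection setups by a one-parameter family, apply the transgression formula for the Bismut superconnection Chern character, and integrate. Third, I would invoke the uniqueness of the index in differential $K$-theory — the fact that a differential $K$-theory class over $B$ is pinned down by its underlying topological class together with its Chern character form, modulo the image of the odd forms — to conclude that the two differential $K$-theory classes coincide. Since both $\wt{\ind}^a_{\wh{K}}$ and $\ind^a_{\wh{K}}$ are additive and natural, agreement on generators upgrades to equality of homomorphisms $\wh{K}(X) \to \wh{K}(B)$.

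The main obstacle I anticipate is the second step: matching the Gorokhovsky--Lott rescaled eta form against the classical Bismut--Cheeger eta form at the level of forms, not just cohomology classes. The subtlety is that Gorokhovsky--Lott's construction introduces an auxiliary finite-dimensional datum (the subspace $\wt{\Delta}_E$ of sections on which the perturbed operator is invertible) and a rescaling parameter, and the resulting eta form a priori differs from the honest Bismut--Cheeger eta form by a Chern--Simons-type term coming from the perturbation; one must verify that this term is precisely compensated by the difference in the connections on the two models of the index bundle. Concretely, I expect to need the anomaly formula for the Bismut--Cheeger eta form under change of the fiberwise operator together with Gorokhovsky--Lott's own comparison of their eta form with the Bismut--Cheeger one (which should be available in \cite[\S5]{GL18}), reducing the problem to a finite-dimensional Chern--Simons computation. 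A secondary technical point is to ensure all the transgression formulas are valid without the kernel bundle assumption, which is exactly the setting \cite{GL18} is designed for, so this should not be a genuine obstruction but will require care in keeping track of the perturbed operators throughout.
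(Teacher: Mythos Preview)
Your general strategy --- compare the underlying $K$-classes, then match the eta-type forms via a one-parameter transgression --- is in the right spirit, and is indeed what the paper does. But two concrete errors would derail your argument as written.

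First, your description of the MFFL analytic index is wrong. There is no embedding $X\hookrightarrow B\times\sss^N$ in the MFFL construction; you are confusing it with a topological-index construction. In this paper (see (\ref{eq 4.3}) and \S\ref{s 3.3}) the MFFL index is
\[
\ind^a_{\wh{K}}(\E)=\bigg(\LL,\ \int_{X/B}\todd(\nabla^{T^VX})\wedge\omega+\wh{\eta}(\EE,\bpi,\LL)\bigg),
\]
where $\LL$ comes from a Mi\v s\v cenko--Fomenko splitting $(\pi_*E)^\pm=K^\pm\oplus L^\pm$ of the infinite-rank pushforward bundle itself. So the objects you need to compare are the ASGL geometric kernel bundle $\KER\big(\DD^{S\otimes E;\Delta}\big)$ together with $\wt\eta(\EE,\bpi,\HH,s)$, versus $\LL$ together with $\wh\eta(\EE,\bpi,\LL)$. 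Also, a generator of $\wh K(X)$ is a pair $(\EE,\omega)$ with $\omega\in\Omega^{\odd}(X)/\im(d)$; there is no ``pushing to $B$'' of $\omega$ at the level of generators.

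Second, your third step invokes a false principle. A differential $K$-class is \emph{not} determined by its underlying topological class and its Chern character form: for fixed $(E,g,\nabla)$ there is a whole $\Omega^{\odd}/\im(d)$ worth of classes, and the kernel of $\Omega^{\odd}/\im(d)\to\wh K$ is the image of $\ch$ on $K^{-1}$, not something you control by the even Chern form. What you must actually produce (this is the Freed--Lott equivalence relation, see (\ref{eq 4.1})) is a $\Z_2$-graded morphism
\[
\wt h:\LL\oplus\wh\VV_0\longrightarrow\KER\big(\DD^{S\otimes E;\Delta}\big)\oplus\wh\VV_1
\]
together with the exact identity
\[
\wh\eta(\EE,\bpi,\LL)-\wt\eta(\EE,\bpi,\HH,s)\equiv\CS\big(\nabla^L\oplus\nabla^{\wh V_0},\,\wt h^*(\nabla^{\ker(\DD^{S\otimes E;\Delta})}\oplus\nabla^{\wh V_1})\big)
\]
in $\Omega^{\odd}(B)/\im(d)$. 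This is the content of (\ref{eq 4.7}).

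The paper builds $\wt h$ and proves that identity as follows. Pull everything back to $\wt X=X\times I$, $\wt B=B\times I$, and interpolate the perturbation by $\wt\Delta(x,t)=\chi(t)\Delta(x)$ for a cutoff $\chi$, so that the restriction of $\DD^{S\otimes\e;\wt\Delta}$ at $t=0$ is $\DD^{S\otimes E}\oplus 0$ and at $t=1$ is $\DD^{S\otimes E;\Delta}$. Apply Mi\v s\v cenko--Fomenko over $\wt B$ to this single interpolated operator to get one $\Z_2$-graded bundle $\LLL\to\wt B$; its restrictions at $t=0,1$ then satisfy the MF property for the two endpoint operators, which (after stabilising by elementary $\wh V_j$) yields the required morphism $\wt h$. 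The eta-form identity then drops out of Stokes' theorem for integration over $\wt B/B$ applied to the local index formula for the superconnection on $\wt B$, together with a short lemma (Lemma~\ref{lemma 4.1}) showing $\wh\eta\big(\EE,\bpi,\KER(\DD^{S\otimes E;\Delta})\big)\equiv\wt\eta(\EE,\bpi,\HH,s)$. The point you identified as the ``main obstacle'' --- matching the two eta forms --- is thus not handled by an off-the-shelf anomaly formula from \cite{GL18}, but by this explicit interpolation over $B\times I$ combined with Lemma~\ref{lemma 4.1}.
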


Recall that Gorokhovsky--Lott define a geometric model $\wh{K}_{\GL}$ of differential $K$-theory and an analytic index $\pi_*$ on $\wh{K}_{\GL}$ \cite[Definitions 6 and 7]{GL18}, and prove that
\begin{equation}\label{eq 1.4}
\pi_*=\ind^a_{\wh{K}}
\end{equation}
when $\pi_*$ is restricted to $\wh{K}$ \cite[Proposition 9]{GL18}. Note that Proposition \ref{prop B} is a special case of (\ref{eq 1.4}). They give a detailed proof of (\ref{eq 1.4}) under the kernel bundle assumption. For the general case of (\ref{eq 1.4}), they reduce it to the case where the kernel bundle assumption holds. We give a direct proof of Proposition \ref{prop B} without the kernel bundle assumption. As an immediate consequence of Proposition \ref{prop B}, we express the analytic index $\ind^a_{\R/\Z}$ in $\R/\Z$ $K$-theory via the ASGL approach (Corollary \ref{coro 6.1}), which enables us to prove Proposition \ref{prop A}.

Despite Proposition \ref{prop B}, there is a subtle difference between the ASGL and MFFL approaches from the perspective of the construction of the analytic index $\ind^a_k$. This is exactly the reason we adopt the former approach in this paper. To clarify the reason, we first briefly review the two approaches.

For any given twisted spin$^c$ Dirac operator $\DD^{S\otimes E}$, Atiyah--Singer prove that there always exist a finite rank complex vector bundle $H\to B$ and a linear map $s:H\to(\pi_*E)^-$ such that $\DD^{S\otimes E}_++s:(\pi_*E)^+\oplus H\to(\pi_*E)^-$ is surjective \cite[Proposition 2.2]{AS71} (see also \cite[Lemma 8.4 of Chapter III]{LM89} and \cite[Lemma 9.30]{BGV}). Moreover, $H\to B$ can always be taken as a trivial bundle. In this case, the analytic index of $[E]$ in $K$-theory is given by
$$\ind^a([E])=\big[\ker\big(\DD^{S\otimes E; \Delta}\big)\big]-[H],$$
where $\Delta$ is an odd self-adjoint map associated to $(H, s)$ and $\DD^{S\otimes E; \Delta}$ is a perturbation of $\DD^{S\otimes E}$. Based on the Atiyah--Singer approach, Gorokhovsky–Lott define a corresponding Bismut superconnection $\wt{\bbb}^{E; \Delta}$ in the sense of Quillen \cite{D85}. The local FIT in the ASGL approach states that
$$d\wt{\eta}(\EE, \bpi, \HH, s)=\int_{X/B}\todd(\nabla^{T^VX})\wedge\ch(\nabla^E)-\ch(\nabla^{\ker(\DD^{S\otimes E; \Delta})}),$$
where $\HH$ is a geometric bundle associated to $H\to B$ and $\wt{\eta}(\EE, \bpi, \HH, s)$ is the Bismut--Cheeger eta form associated to $\wt{\bbb}^{E; \Delta}$.

On the other hand, Mi\v s\v cenko--Fomenko prove that there always exist a finite rank $\Z_2$-graded subbundle $L\to B$ of $\pi_*E\to B$ and a $\Z_2$-graded closed complementary subbundle $K\to B$ of $L\to B$ such that
\begin{equation}\label{eq 1.5}
(\pi_*E)^+=K^+\oplus L^+\quad\textrm{ and }\quad(\pi_*E)^-=K^-\oplus L^-,
\end{equation}
$\DD^{S\otimes E}_+:(\pi_*E)^+\to(\pi_*E)^-$ is block diagonal with respect to (\ref{eq 1.5}) and $\DD^{S\otimes E}_+:K_+\to K_-$ is an isomorphism \cite[p.96-97]{MF79}. In this case, the analytic index of $[E]$ in $K$-theory is given by
$$\ind^a([E])=[L^+]-[L^-].$$
Based on the Mi\v s\v cenko--Fomenko approach, Freed–Lott define a corresponding Bismut superconnection $\wh{\bbb}^E$. The local FIT in the MFFL approach states that
$$d\wh{\eta}(\EE, \bpi, \LL)=\int_{X/B}\todd(\nabla^{T^VX})\wedge\ch(\nabla^E)-\ch(\nabla^L),$$
where $\wh{\eta}(\EE, \bpi, \LL)$ is the Bismut--Cheeger eta form associated to $\wh{\bbb}^E$.

The main difficulty in the construction of the analytic index $\ind^a_k$ is to define a $\Z_2$-graded morphism  between geometric kernel bundles. To do so, we first apply the extended variational formula for the Bismut--Cheeger eta form in the ASGL approach (Proposition \ref{prop 5.4}) to the $k$-cocycle $(\EE, \FF, \alpha)$ and the additivity of geometric kernel bundles and of the Bismut--Cheeger eta form (Proposition \ref{prop 5.5}) to obtain a $\Z_2$-graded morphism
\begin{equation}\label{eq 1.6}
h:k\KER\big(\DD^{S\otimes E; \Delta_E}\big)\oplus\wh{\VV}_0\to k\KER\big(\DD^{S\otimes F; \Delta_F}\big)\oplus\wh{\VV}_1,
\end{equation}
where $\wh{\VV}_0$ and $\wh{\VV}_1$ are $\Z_2$-graded geometric bundles. In fact, $h$ is defined to be the composition of the $\Z_2$-graded morphisms in the vertical columns and the bottom horizontal row of the following diagram.
\begin{center}
\begin{tikzcd}
k\KER\big(\DD^{S\otimes E; \Delta_E}\big)\oplus\wh{\VV}_0 \arrow{d} \arrow[rr, dashed]{}{h} & & k\KER\big(\DD^{S\otimes F; \Delta_F}\big)\oplus\wh{\VV}_1 \\ \KER\big(\DD^{S\otimes kE; k\Delta_E}\big)\oplus\wh{\VV}_0 \arrow{r} & \KER\big(\DD^{S\otimes kE; \Delta_\alpha}_\alpha\big)\oplus\wh{\VV}_1 \arrow{r} & \KER\big(\DD^{S\otimes kF; k\Delta_F}\big)\oplus\wh{\VV}_1 \arrow{u} & 
\end{tikzcd}
\end{center}
See (\ref{eq 6.10}) for the details. When defining the analytic indexes of $\DD^{S\otimes E}$ and $\DD^{S\otimes F}$ in $K$-theory via the Atiyah--Singer approach, the fact that we can take $H_E\to B$ and $H_F\to B$ to be trivial bundles and our proofs of Propositions \ref{prop 5.1} to \ref{prop 5.3} show that the $\wh{\VV}_0$ and $\wh{\VV}_1$ in (\ref{eq 1.6}) are trivial (see (2) of Remark \ref{remark 5.1} for the details). This fact enables us to take the direct sum of the domain and the image of $h$ (\ref{eq 1.6}) with $\Z_2$-graded trivial geometric bundles $\wh{\CC}^r$ for some appropriate rank $r$, whose purpose is to make the ranks of both the domain and the image of $h$ (\ref{eq 1.6}) to be some multiple of $k$. Then by using Lemma \ref{lemma 5.1}, the resulting domain and image of $h$ (\ref{eq 1.6}) can be written as the direct sum of $k$ copies of the geometric kernel bundles appearing on the right-hand side of (\ref{eq 1.1}). See \S\ref{s 6.2} for the details. Thus the triple defining $\ind^a_k(\EE, \FF, \alpha)$ (\ref{eq 1.1}) is a $\Z_2$-graded $k$-cocycle over $B$. In contrast, we are unable to obtain a $\Z_2$-graded cocycle of the form (\ref{eq 1.1}) by employing the Mi\v s\v cenko--Fomenko approach since, in general, the corresponding $\wh{\VV}_0$ and $\wh{\VV}_1$ cannot be made trivial to begin with.

If our goal was just to obtain the extended variational formula for the Bismut--Cheeger eta form in the ASGL approach, we could have applied Proposition \ref{prop B} and \cite[Theorem 3.4]{H23}. However, in such a case, the $\wh{\VV}_0$ and $\wh{\VV}_1$ in (\ref{eq 1.6}) are not trivial in general. This necessitates proving the extended variational formula for the Bismut--Cheeger eta form in the ASGL approach (Proposition \ref{prop 5.4}) along the lines similar to those in \cite[\S3]{H23}.

\subsection{Outline}\label{s 1.3}

The paper is organized as follows. In Section \ref{s 2}, we fix terminology and notation and review the background material, including the category $\bun_\nabla(X)$ and properties of the Chern--Simons form. In Section \ref{s 3}, we review the ASGL and MFFL approaches to the local FIT without the kernel bundle assumption.

We prove the equivalence of the analytic indexes in differential $K$-theory defined by the ASGL and MFFL approaches in Section \ref{s 4}. In Section \ref{s 5}, we prove an extended variational formula for the Bismut--Cheeger eta form in the ASGL approach. In Section \ref{s 6}, we first define the odd $\Z/k\Z$ $K$-theory group by applying Karoubi's construction to the category $\bun_\nabla(X)$ and the functor $\varphi_{k, \nabla}$. We then construct the analytic index $\ind^a_k$ and prove the RRG-type formula in odd $\Z/k\Z$ $K$-theory. Finally, we show that the analytic index $\ind^a_k$ and the RRG-type formula in odd $\Z/k\Z$ $K$-theory refine their analogues in $\R/\Z$ $K$-theory.

\section*{Acknowledgement}

We thank Steve Rosenberg for his invaluable advice, Thomas Schick for his inspiring comments and suggestions on $\Z/k\Z$ $K$-theory, and Steven Costenoble for generously answering many questions. We also thank the referee for the comments and suggestions to improve the paper significantly.

\section{Preliminaries}\label{s 2}

We review the background material in this section. We introduce the category $\bun_\nabla(X)$, recall the notion of split quadruple and a morphism between geometric bundles in \S\ref{s 2.1}. In \S\ref{s 2.2}, we recall the definition and some properties of Chern--Simons form, and we refer the readers to \cite[\S B.5]{MM07} for the details.

\subsection{Background material}\label{s 2.1}

In this paper, $X$ and $B$ are closed manifolds and $I$ is the closed interval $[0, 1]$. Given a manifold $X$, define $\wt{X}=X\times I$. Given $t\in I$, define a map $i_{X, t}:X\to\wt{X}$ by $i_{X, t}(x)=(x, t)$. Denote by $p_X:\wt{X}\to X$ the standard projection map. For any differential forms $\omega$ and $\eta$, we write $\omega\equiv\eta$ if $\omega-\eta\in\im(d)$.

Let $M$ be a smooth manifold and $\pi:M\to B$ a smooth fiber bundle with compact fibers of dimension $n$ which satisfies certain orientability assumptions. Then
$$\int_{M/B}\pi^*\alpha\wedge\beta=\alpha\wedge\bigg(\int_{M/B}\beta\bigg)$$
for any $\alpha\in\Omega^\bullet(B)$ and $\beta\in\Omega^\bullet(M)$. If $M$ has nonempty boundary, then Stokes' theorem for integration along the fibers \cite[Problem 4 of Chapter VII]{GHV} states that for any $\omega\in\Omega^k(M)$,
\begin{equation}\label{eq 2.1}
(-1)^{k-n+1}\int_{\partial M/B}i^*\omega=\int_{M/B}d^M\omega-d^B\int_{M/B}\omega,
\end{equation}
where $i:\partial M\hto M$ is the inclusion map.

Let $\bun_\nabla(X)$ be the category of geometric bundles $\EE=(E, g^E, \nabla^E)$, where $E\to X$ is a complex vector bundle equipped with a Hermitian metric $g^E$ and a unitary connection $\nabla^E$. Given any two geometric bundles $\EE$ and $\FF$, a morphism $\alpha:\EE\to\FF$ is an isometric isomorphism $\alpha:(E, g^E)\to(F, g^F)$ of the underlying Hermitian bundles. Throughout this paper, a morphism $T:\EE\to\FF$ between geometric bundles always means the one in $\bun_\nabla(X)$, but not the morphism between the underlying bundles in $\bun(X)$. Since every morphism in $\bun_\nabla(X)$ is an isomorphism, $\bun_\nabla(X)$ is a groupoid. But we will not use this fact. A geometric bundle $\EE=(E, g^E, \nabla^E)$ is said to be $\Z_2$-graded if $E\to X$, $g^E$ and $\nabla^E$ are $\Z_2$-graded. For a $\Z_2$-graded geometric bundle $\EE$, denote by $\EE^\pm$ the even and the odd part of $\EE$, respectively, and by $\EE^{\op}$ the $\Z_2$-graded geometric bundle whose $\Z_2$-grading is opposite to that of $\EE$. Given a geometric bundle $\EE$, denote by $\wh{\EE}$ the associated $\Z_2$-graded geometric bundle defined by $\wh{\EE}^\pm=\EE$. Such $\Z_2$-graded geometric bundles are said to be elementary. For any $\ell\in\N$, write $\CC^\ell=(\C^\ell, g^\ell, d^\ell)$, where $\C^\ell\to X$ is the trivial complex vector bundle of rank $\ell$, and $g^\ell$ and $d^\ell$ are the standard metric and the standard connection on $\C^\ell\to X$, respectively.

We now recall the definition of a split quadruple \cite[Definition 2.9]{B96}. Let $\WW$ be a geometric bundle over $B$. The map $s_{\wh{W}}:\wh{W}\to\wh{W}$ defined by
\begin{equation}\label{eq 2.2}
s_{\wh{W}}=\begin{pmatrix} 0 & \id \\ \id & 0 \end{pmatrix}
\end{equation}
is odd self-adjoint. Then $(\wh{\WW}, s_{\wh{W}})$ is an example of split quadruple. Fix $a\in(0, 1)$. Let $\alpha:[0, \infty)\to I$ be a smooth function that satisfies $\alpha(t)=0$ for all $t\leq a$ and $\alpha(t)=1$ for all $t\geq 1$. Define a rescaled superconnection on $\wh{W}\to B$ by
\begin{equation}\label{eq 2.3}
\aaa^{\wh{W}}_t=\sqrt{t}\alpha(t)s_{\wh{W}}+\nabla^{\wh{W}}.
\end{equation}
Define the Bismut--Cheeger eta form associated to $\aaa^{\wh{W}}_t$ by
$$\wh{\eta}^{\wh{W}}=\frac{1}{\sqrt{\pi}}\int^\infty_0\str\bigg(\frac{d\aaa^{\wh{W}}_t}{dt}e^{-\frac{1}{2\pi i}(\aaa^{\wh{W}}_t)^2}\bigg)dt.$$
Since 
$$\str\bigg(\frac{d\aaa^{\wh{W}}_t}{dt}e^{-\frac{1}{2\pi i}(\aaa^{\wh{W}}_t)^2}\bigg)=0$$ 
by \cite[(2.24)]{B96}, it follows that
\begin{equation}\label{eq 2.4}
\wh{\eta}^{\wh{W}}=0.
\end{equation}

We now recall a morphism between geometric bundles defined by parallel transport. Let $\bee=(\e, g^\e, \nabla^\e)$ be a geometric bundle over $\wt{X}$. Let $j\in\set{0, 1}$ and define a geometric bundle $\EE_j$ over $X$ by $\EE_j=i_{X, j}^*\bee$. Note that for each $p\in X$,
$$(E_j)_p=(i_{X, j}^*\e)_p=\e_{(p, j)}.$$

For each fixed $p\in X$, define a smooth curve $c_p:I\to\wt{X}$ by $c_p(t)=(p, t)$. The linear map $P_{c_p}:\e_{(p, 0)}\to\e_{(p, 1)}$ given by the parallel transport associated to $\nabla^\e$ along $c_p$ is independent of the choices of reparameterizations of $c_p$ and the representation of $c_p$ as a concatenation of piecewise smooth curves. Write $\pi:E_0\to X$ for the bundle projection. The map
\begin{equation}\label{eq 2.5}
P_{01}:\EE_0\to\EE_1
\end{equation}
defined by
$$P_{01}(e):=P_{c_{\pi(e)}}(e)$$
is a morphism (see, for example, \cite[Remark 5.2]{AB07}) and is uniquely determined by $\bee$ and $c_p$.

\subsection{Chern--Simons form}\label{s 2.2}

Let $\EE=(E, g^E, \nabla^E)$ be a geometric bundle over $X$. Denote by $R^E$ the curvature of $\nabla^E$. The Chern character form of $\nabla^E$ is defined to be
$$\ch(\nabla^E)=\tr(e^{-\frac{1}{2\pi i}R^E}),$$
and the first Chern form of $\nabla^E$ is defined to be
$$c_1(\nabla^E)=-\frac{1}{2\pi i}\tr(R^E).$$

Let $\EE_0=(E, g^E_0, \nabla^E_0)$ and $\EE_1=(E, g^E_1, \nabla^E_1)$ be geometric bundles over $X$. By \cite[Theorem 8.8 of Chapter I]{K08}, there exists a unique $f\in\Aut(E)$ such that $g^E_0=f^*g^E_1$. Thus $f^*\nabla^E_1$ is unitary with respect to $g^E_0$. For each $t\in I$, let $f_t=(1-t)\id_E+tf^{-1}$. Then
\begin{equation}\label{eq 2.6}
g^E_t=f_t^*g^E_0\quad\textrm{ and }\quad\nabla^E_t=f_t^*\big((1-t)\nabla^E_0+tf^*\nabla^E_1\big)
\end{equation}
are smooth paths of Hermitian metrics and unitary connections from $g^E_0$ to $g^E_1$ and from $\nabla^E_0$ to $\nabla^E_1$, respectively. Note that $\nabla^E_t$ is unitary with respect to $g^E_t$ for each $t\in I$. Define a geometric bundle $\bee=(\e, g^\e, \nabla^\e)$ over $\wt{X}$ by $\e=p_X^*E$,
\begin{equation}\label{eq 2.7}
g^\e=p_X^*g^E_t\quad\textrm{ and }\quad\nabla^\e=dt\wedge\bigg(\frac{\partial}{\partial t}+\frac{1}{2}(g^E_t)^{-1}\frac{\partial}{\partial t}g^E_t\bigg)+\nabla^E_t.
\end{equation}
Note that $i_{X, j}^*\bee=\EE_j$ for $j\in\set{0, 1}$. The Chern--Simons form $\CS(\nabla^E_0, \nabla^E_1)\in\frac{\Omega^{\odd}(X)}{\im(d)}$ is defined by
$$\CS(\nabla^E_0, \nabla^E_1):=-\int_{\wt{X}/X}\ch(\nabla^\e)\mod\im(d).$$
Note that $\CS(\nabla^E_0, \nabla^E_1)$ does not depend on the choice of $(g^\e, \nabla^\e)$ satisfying $i_{X, j}^*(g^\e, \nabla^\e)=(g^E_j, \nabla^E_j)$, and satisfies the following transgression formula:
$$d\CS(\nabla^E_0, \nabla^E_1)=\ch(\nabla^E_1)-\ch(\nabla^E_0).$$
Equivalently, the Chern--Simons form can be defined as
\begin{equation}\label{eq 2.8}
\CS(\nabla^E_0, \nabla^E_1)=-\frac{1}{2\pi i}\int^1_0\tr\bigg(\frac{d\nabla^E_t}{dt}e^{-\frac{1}{2\pi i}R^E_t}\bigg)dt.
\end{equation}
The choices of 0 and 1 are immaterial. If $t<T$ are two fixed positive real numbers, then one can replace 0 by $t$ and 1 by $T$ in (\ref{eq 2.8}).

The Chern--Simons form satisfies the following properties:
\begin{align}
\CS(\nabla^E_1, \nabla^E_0)&\equiv-\CS(\nabla^E_0, \nabla^E_1),\label{eq 2.9}\\
\CS(\nabla^E_1, \nabla^E_0)&\equiv\CS(\nabla^E_1, \nabla^E_2)+\CS(\nabla^E_2, \nabla^E_0),\label{eq 2.10}\\
\CS(\nabla^E_1\oplus\nabla^F_1, \nabla^E_0\oplus\nabla^F_0)&\equiv\CS(\nabla^E_1, \nabla^E_0)+\CS(\nabla^F_1, \nabla^F_0),\label{eq 2.11}
\end{align}
where $\FF_0=(F, g^F_0, \nabla^F_0)$ and $\FF_1=(F, g^F_1, \nabla^F_1)$ are geometric bundles over $X$. In particular, (\ref{eq 2.9}) implies 
\begin{equation}\label{eq 2.12}
\CS(\nabla^E, \nabla^E)\equiv 0.
\end{equation}
If $\EE_j$ is $\Z_2$-graded, then
\begin{equation}\label{eq 2.13}
\CS(\nabla_0^E, \nabla^E_1)\equiv\CS(\nabla_0^{E, +}, \nabla_1^{E, +})-\CS(\nabla_0^{E, -}, \nabla_1^{E, -}).
\end{equation}

Now suppose $g^F_0=g^F_1$. Let $\alpha:\EE\to\FF_j$ be a morphism. The naturality of Chern--Simons form implies
\begin{equation}\label{eq 2.14}
\CS(\alpha^*\nabla^F_0, \alpha^*\nabla^F_1)\equiv\CS(\nabla^F_0, \nabla^F_1).
\end{equation}

Let $(H, g^H, \nabla^H)$ be a Riemannian bundle with a metric connection. Denote by $R^H$ the curvature of $\nabla^H$. The $\wh{A}$-genus form of $\nabla^H$ is defined to be
$$\wh{A}(\nabla^H)=\sqrt{\det\bigg(\frac{-\frac{1}{4\pi i}R^H}{\sinh(-\frac{1}{4\pi i}R^H)}\bigg)}\in\Omega^{4\bullet}(X).$$

\section{Local index theory for twisted spin$^c$ Dirac operators}\label{s 3}

In this section, we review the setup and the statement of the local FIT for twisted spin$^c$ Dirac operators without the kernel bundle assumption. We review the geometric setup on a given submersion $\pi:X\to B$ with closed, oriented and spin$^c$ fibers of even dimension in \S\ref{s 3.1}. In \S\ref{s 3.2} and \S\ref{s 3.3}, we review the local FIT without the kernel bundle assumption via the ASGL and MFFL approaches, respectively.

\subsection{Riemannian and differential spin$^c$ structures}\label{s 3.1}

Let $\pi:X\to B$ be a submersion with closed, oriented and spin$^c$ fibers of even dimension $n$. Denote by $T^VX\to X$ the vertical tangent bundle. Recall from \cite[p.918]{FL10} that a Riemannian structure $(T^HX, g^{T^VX})$ on $\pi:X\to B$ consists of a horizontal distribution $T^HX\to X$, i.e. $TX=T^VX\oplus T^HX$, and a metric $g^{T^VX}$ on $T^VX\to X$. Put a Riemannian metric $g^{TB}$ on $TB\to B$. Define a metric $g^{TX}$ on $TX\to X$ by
$$g^{TX}=g^{T^VX}\oplus\pi^*g^{TB}.$$
Denote by $\nabla^{TX}$ and $\nabla^{TB}$ the Levi-Civita connections on $TX\to X$ and $TB\to X$ associated to $g^{TX}$ and $g^{TB}$, respectively. Let $P^{T^VX}:TX\to T^VX$ be the projection map. Note that $\nabla^{T^VX}:=P^{T^VX}\circ\nabla^{TX}\circ P^{T^VX}$ is a metric connection on the Riemannian bundle $(T^VX, g^{T^VX})$.

Define a connection $\wt{\nabla}^{TX}$ on $TX\to X$ by
$$\wt{\nabla}^{TX}=\nabla^{T^VX}\oplus\pi^*\nabla^{TB}.$$
Then $S:=\nabla^{TX}-\wt{\nabla}^{TX}\in\Omega^1(X, \End(TX))$. By \cite[Theorem 1.9]{B86}, the $(3, 0)$ tensor $g^{TX}(S(\cdot)\cdot, \cdot)$ depends only on $(T^HX, g^{T^VX})$.
Let $\set{e_1, \ldots, e_n}$ be a local orthonormal frame for $T^VX\to X$. For any $U\in\Gamma(B, TB)$, denote by $U^H\in\Gamma(X, T^HX)$ its horizontal lift. Define a horizontal one-form $k$ on $X$ by
\begin{equation}\label{eq 3.1}
k(U^H)=-\sum_{k=1}^ng^{TX}(S(e_k)e_k, U^H).
\end{equation}
For any $U, V\in\Gamma(B, TB)$,
\begin{equation}\label{eq 3.2}
T(U, V):=-P^{T^VX}[U^H, V^H]
\end{equation}
is a horizontal two-form with values in $T^VX$ and is called the curvature of $\pi:X\to B$.

Denote by $d\vol(Z)$ the Riemannian volume element of the fiber $Z$, which is a section of $\Lambda^n(T^VX)^*\to X$.

Fix a topological spin$^c$ structure on $T^VX\to X$. This fixes a complex line bundle $\lambda\to X$ satisfying $w_2(T^VX)=c_1(\lambda)\mod 2$ \cite[p.397]{LM89}. The spinor bundle $S(T^VX)\to X$ associated to the chosen topological spin$^c$ structure of $T^VX\to X$ is given by
$$S(T^VX)=S_0(T^VX)\otimes\lambda^{\frac{1}{2}},$$
where $S_0(T^VX)$ is the spinor bundle for the locally existing spin structure of $T^VX\to X$ and $\lambda^{\frac{1}{2}}$ is the locally existing square root of $\lambda\to X$. Since $n$ is even, $S(T^VX)\to X$ is $\Z_2$-graded. Recall from \cite[p.918]{FL10} that a differential spin$^c$ structure on $\pi:X\to B$ is a geometric bundle $(\lambda, g^\lambda, \nabla^\lambda)$ over $X$.

Denote by
$$\bpi=(T^HX, g^{T^VX}, \lambda, g^\lambda, \nabla^\lambda)$$
a Riemannian and differential spin$^c$ structure on $\pi:X\to B$. Note that $\bpi$ induces a $\Z_2$-graded geometric bundle $\bSSS(T^VX)$ over $X$. Define the Todd form of $\nabla^{T^VX}$ by
$$\todd(\nabla^{T^VX})=\wh{A}(\nabla^{T^VX})\wedge e^{\frac{1}{2}c_1(\nabla^\lambda)}.$$

Let $\EE=(E, g^E, \nabla^E)$ be a geometric bundle over $X$. Consider the tensor product $\bSSS(T^VX)\otimes\EE$. The twisted spin$^c$ Dirac operator $\DD^{S\otimes E}:\Gamma(X, S(T^VX)\otimes E)\to\Gamma(X, S(T^VX)\otimes E)$ associated to $(\EE, \bpi)$ is defined by
$$\DD^{S\otimes E}=\sum_{k=1}^nc(e_k)\nabla_{e_k}^{S(T^VX)\otimes E},$$
where $c$ is the Clifford multiplication. Note that $\DD^{S\otimes E}$ is odd self-adjoint.

Define an infinite rank $\Z_2$-graded complex vector bundle $\pi_*E\to B$ whose fiber over $b\in B$ is given by
$$(\pi_*E)_b=\Gamma(Z_b, (S(T^VX)\otimes E)|_{Z_b}).$$
The space of sections of $\pi_*E\to B$ is defined to be
$$\Gamma(B, \pi_*E):=\Gamma(X, S(T^VX)\otimes E).$$
Define an $L^2$-metric on $\pi_*E\to B$ by
$$g^{\pi_*E}(s_1, s_2)(b)=\int_{Z_b}g^{S(T^VX)\otimes E}(s_1, s_2)d\vol(Z).$$
Define a connection on $\pi_*E\to B$ by
$$\nabla^{\pi_*E}_Us:=\nabla^{S(T^VX)\otimes E}_{U^H}s,$$
where $s\in\Gamma(B, \pi_*E)$ and $U\in\Gamma(B, TB)$. Then the connection on $\pi_*E\to B$ defined by
$$\nabla^{\pi_*E, u}:=\nabla^{\pi_*E}+\frac{1}{2}k,$$
where $k$ is given by (\ref{eq 3.1}), is $\Z_2$-graded and unitary with respect to $g^{\pi_*E}$. Write $\pi_*\EE$ for the $\Z_2$-graded geometric bundle $(\pi_*E, g^{\pi_*E}, \nabla^{\pi_*E, u})$.

\subsection{The Atiyah--Singer--Gorokhovsky--Lott approach}\label{s 3.2}

For the details of the ASGL approach to the local FIT without the kernel bundle assumption, we refer the readers to \cite[\S4, 5]{GL18} for the details.

Atiyah--Singer prove that there exist a finite rank complex vector bundle $H\to B$ and a linear map $s:H\to(\pi_*E)^-$ such that
$$\DD^{S\otimes E}_++s:(\pi_*E)^+\oplus H\to(\pi_*E)^-$$
is surjective \cite[Proposition 2.2]{AS71}. Here, $\DD^{S\otimes E}_++s$ is understood to be $(\DD^{S\otimes E}_+\oplus 0)+(0\oplus s)$. Note that $H\to B$ can always be taken as a trivial bundle. Such a pair $(H, s)$ is said to satisfy the AS property for $\DD^{S\otimes E}$.

Define maps $\Delta_\pm:(\pi_*E\oplus\wh{H})^\pm\to(\pi_*E\oplus\wh{H})^\mp$ by
$$\Delta_+=\begin{pmatrix} 0 & s \\ 0 & 0 \end{pmatrix}\quad\textrm{ and }\quad\Delta_-=\begin{pmatrix} 0 & 0 \\ s^* & 0 \end{pmatrix}.$$
The map $\Delta:\pi_*E\oplus\wh{H}\to\pi_*E\oplus\wh{H}$ defined by
\begin{equation}\label{eq 3.3}
\Delta=\begin{pmatrix} 0 & \Delta_- \\ \Delta_+ & 0 \end{pmatrix}
\end{equation}
is odd self-adjoint, and the same is true for the map
$$\DD^{S\otimes E; \Delta}:=(\DD^{S\otimes E}\oplus 0)+\Delta:\pi_*E\oplus\wh{H}\to\pi_*E\oplus\wh{H}.$$

With respect to the $\Z_2$-grading of $\pi_*E\oplus\wh{H}\to B$, $\DD^{S\otimes E; \Delta}$ can be written as
$$\DD^{S\otimes E; \Delta}=\begin{pmatrix} 0 & 0 & \DD^{S\otimes E}_- & 0 \\ 0 & 0 & s^* & 0 \\ \DD^{S\otimes E}_+ & s & 0 & 0 \\ 0 & 0 & 0 & 0 \end{pmatrix}.$$
In particular,
$$\DD^{S\otimes E; \Delta}_+=\begin{pmatrix} \DD^{S\otimes E}_+ & s \\ 0 & 0 \end{pmatrix}\quad\textrm{ and }\quad\DD^{S\otimes E; \Delta}_-=\begin{pmatrix} \DD^{S\otimes E}_- & 0 \\ s^* & 0 \end{pmatrix}.$$

Since $\DD^{S\otimes E}_++s$ is surjective, the vector bundle $\ker\big(\DD^{S\otimes E}_++s\big)\to B$ exists and is a finite rank subbundle of $(\pi_*E\oplus\wh{H})^+\to B$. Thus the vector bundle $\ker\big(\DD^{S\otimes E; \Delta}_+\big)\to B$ exists and
\begin{equation}\label{eq 3.4}
\ker\big(\DD^{S\otimes E; \Delta}_+\big)=\ker\big(\DD^{S\otimes E}_++s\big).
\end{equation}
On the other hand, the vector bundle $\ker\big(\DD^{S\otimes E; \Delta}_-\big)\to B$ exists and
\begin{equation}\label{eq 3.5}
\ker\big(\DD^{S\otimes E; \Delta}_-\big)\cong\cok\big(\DD^{S\otimes E; \Delta}_+\big)=\frac{(\pi_*E\oplus\wh{H})^-}{\im\big(\DD^{S\otimes E; \Delta}_+\big)}\cong\wh{H}^-.
\end{equation}
By (\ref{eq 3.4}) and (\ref{eq 3.5}), the vector bundle
$$\ker\big(\DD^{S\otimes E; \Delta}\big)\to B$$
exists and is a finite rank $\Z_2$-graded subbundle of $\pi_*E\oplus\wh{H}\to B$. There is an orthogonal decomposition
\begin{equation}\label{eq 3.6}
\pi_*E\oplus\wh{H}=\im\big(\DD^{S\otimes E; \Delta}\big)\oplus\ker\big(\DD^{S\otimes E; \Delta}\big).
\end{equation}
The analytic index $\ind^a([E])$ in $K$-theory is given by
$$\ind^a([E])=\big[\ker\big(\DD^{S\otimes E; \Delta}\big)^+\big]-\big[\ker\big(\DD^{S\otimes E; \Delta}\big)^-\big].$$

By putting a Hermitian metric $g^H$ and a unitary connection $\nabla^H$ on $H\to B$, we write $\HH$ for the geometric bundle $(H, g^H, \nabla^H)$. Denote by $g^{\ker(\DD^{S\otimes E; \Delta})}$ the $\Z_2$-graded Hermitian metric on $\ker\big(\DD^{S\otimes E; \Delta}\big)\to B$ inherited from $g^{\pi_*E}\oplus g^{\wh{H}}$.

Define a rescaled Bismut superconnection on $\pi_*E\oplus\wh{H}\to B$ by
\begin{equation}\label{eq 3.7}
\wt{\bbb}^{E; \Delta}_t=\sqrt{t}\DD^{S\otimes E; \Delta}+\big(\nabla^{\pi_*E, u}\oplus\nabla^{\wh{H}}\big)-\frac{c(T)}{4\sqrt{t}},
\end{equation}
where $T$ is given by (\ref{eq 3.2}) and the last term on the right-hand side of (\ref{eq 3.7}) is understood to be $\frac{c(T)}{4\sqrt{t}}\oplus 0$.\footnote{This convention applies to other Bismut--Cheeger eta forms.}

Let $P:\pi_*E\oplus\wh{H}\to\ker\big(\DD^{S\otimes E; \Delta}\big)$ be the even orthogonal projection with respect to (\ref{eq 3.6}). The connection on $\ker\big(\DD^{S\otimes E; \Delta}\big)\to B$ defined by
$$\nabla^{\ker(\DD^{S\otimes E; \Delta})}=P\circ\big(\nabla^{\pi_*E, u}\oplus\nabla^{\wh{H}}\big)\circ P$$
is $\Z_2$-graded and unitary with respect to $g^{\ker(\DD^{S\otimes E; \Delta})}$. Write $\KER\big(\DD^{S\otimes E; \Delta}\big)$ for the $\Z_2$-graded geometric bundle
$$\big(\ker\big(\DD^{S\otimes E; \Delta}\big), g^{\ker(\DD^{S\otimes E; \Delta})}, \nabla^{\ker(\DD^{S\otimes E; \Delta})}\big).$$
Henceforth, we call $\HH$ the geometric bundle defining $\KER\big(\DD^{S\otimes E; \Delta}\big)$.

By \cite[Theorem 10.23 and Corollary 9.22]{BGV},
\begin{align}
\lim_{t\to 0}\ch(\wt{\bbb}^{E; \Delta}_t)&=\int_{X/B}\todd(\nabla^{T^VX})\wedge\ch(\nabla^E),\label{eq 3.8}\\
\lim_{t\to\infty}\ch(\wt{\bbb}^{E; \Delta}_t)&=\ch(\nabla^{\ker(\DD^{S\otimes E; \Delta})}).\label{eq 3.9}
\end{align}
Define the Bismut--Cheeger eta form associated to $\wt{\bbb}^{E; \Delta}_t$ by
\begin{equation}\label{eq 3.10}
\wt{\eta}(\EE, \bpi, \HH, s)=\frac{1}{\sqrt{\pi}}\int^\infty_0\str\bigg(\frac{d\wt{\bbb}^{E; \Delta}_t}{dt}e^{-\frac{1}{2\pi i}(\wt{\bbb}^{E; \Delta}_t)^2}\bigg)dt.
\end{equation}
By (\ref{eq 3.8}) and (\ref{eq 3.9}), the local FIT for $\DD^{S\otimes E}$ in the ASGL approach states that
$$d\wt{\eta}(\EE, \bpi, \HH, s)=\int_{X/B}\todd(\nabla^{T^VX})\wedge\ch(\nabla^E)-\ch(\nabla^{\ker(\DD^{S\otimes E; \Delta})}).$$

\subsection{The Mi\v s\v cenko--Fomenko--Freed--Lott approach}\label{s 3.3}

For the details of the MFFL approach to the local FIT without the kernel bundle assumption, we refer the readers to \cite[\S7.12]{FL10} for the details.

Mi\v s\v cenko--Fomenko prove that there exist finite rank subbundles $L^\pm\to B$ and closed complementary subbundles $K^\pm\to B$ of $(\pi_*E)^\pm\to B$ such that
\begin{equation}\label{eq 3.11}
(\pi_*E)^+=K^+\oplus L^+,\qquad(\pi_*E)^-=K^-\oplus L^-,
\end{equation}
$\DD^{S\otimes E}_+:(\pi_*E)^+\to(\pi_*E)^-$ is block diagonal as a map with respect to (\ref{eq 3.11}) and $\DD^{S\otimes E}_+|_{K^+}:K^+\to K^-$ is an isomorphism \cite[p.96-97]{MF79} (see also \cite[Lemma 7.13]{FL10}).

Given complex vector bundles $L^\pm\to B$ satisfying the above conditions, we say the $\Z_2$-graded complex vector bundle $L\to B$, defined by $L=L^+\oplus L^-$, satisfies the MF property for $\DD^{S\otimes E}$. If $L\to B$ satisfies the MF property for $\DD^{S\otimes E}$, then the analytic index $\ind^a([E])$ in $K$-theory is defined to be
$$\ind^a([E])=[L^+]-[L^-].$$
It is proved that $\ind^a([E])$ does not depend on the choice of $L\to B$ satisfying the MF property for $\DD^{S\otimes E}$ \cite[p.96-97]{MF79}.

Let $g^L$ be the $\Z_2$-graded Hermitian metric on $L\to B$ inherited from $g^{\pi_*E}$. Denote by $P:\pi_*E\to L$ the $\Z_2$-graded projection map with respect to (\ref{eq 3.11}). The connection on $L\to B$ defined by
$$\nabla^L:=P\circ\nabla^{\pi_*E, u}\circ P.$$
is $\Z_2$-graded and unitary with respect to $g^L$. Write $\LL$ for the $\Z_2$-graded geometric bundle
$$(L, g^L, \nabla^L).$$

Given a $\Z_2$-graded complex vector bundle $L\to B$ satisfying the MF property for $\DD^{S\otimes E}$, consider the infinite rank $\Z_2$-graded complex vector bundle $\pi_*E\oplus L^{\op}\to B$. Let $i_-:L^-\to(\pi_*E)^-$ be the inclusion map and $z\in\C$. Define a map $\DDD^{S\otimes E}_+(z):(\pi_*E\oplus L^{\op})^+\to(\pi_*E\oplus L^{\op})^-$ by
\begin{equation}\label{eq 3.12}
\DDD^{S\otimes E}_+(z)=\begin{pmatrix} \DD^{S\otimes E}_+ & z i_- \\ z P_+ & 0 \end{pmatrix}.
\end{equation}
Note that $\DDD^{S\otimes E}_+(z)$ is invertible for all $z\neq 0$ \cite[Lemma 7.20]{FL10}. Define a map $\DDD^{S\otimes E}(z):\pi_*E\oplus L^{\op}\to\pi_*E\oplus L^{\op}$ by
\begin{equation}\label{eq 3.13}
\DDD^{S\otimes E}(z):=\begin{pmatrix} 0 & \DDD^{S\otimes E}_+(z)^* \\ \DDD^{S\otimes E}_+(z) & 0 \end{pmatrix}.
\end{equation}

Define a rescaled Bismut superconnection on $\pi_*E\oplus L^{\op}\to B$ by
\begin{equation}\label{eq 3.14}
\wh{\bbb}^E_t=\sqrt{t}\DDD^{S\otimes E}(\alpha(t))+\big(\nabla^{\pi_*E, u}\oplus\nabla^{L, \op}\big)-\frac{c(T)}{4\sqrt{t}},
\end{equation}
where $\alpha:[0, \infty)\to I$ is the smooth function given in \S\ref{s 2.1}. Since $\DDD^{S\otimes E}(\alpha(t))$ is invertible for $t\geq 1$,
\begin{equation}\label{eq 3.15}
\lim_{t\to\infty}\ch(\wh{\bbb}^E_t)=0.
\end{equation}
On the other hand, for $t\leq a$, $\wh{\bbb}^E_t$ decouples, i.e.
\begin{equation}\label{eq 3.16}
\wh{\bbb}^E_t=\bigg(\sqrt{t}\DD^{S\otimes E}+\nabla^{\pi_*E, u}-\frac{c(T)}{4\sqrt{t}}\bigg)\oplus\nabla^{L, \op}.
\end{equation}
By (\ref{eq 3.16}) and \cite[Theorem 10.23]{BGV},
\begin{equation}\label{eq 3.17}
\lim_{t\to 0}\ch(\wh{\bbb}^E_t)=\int_{X/B}\todd(\nabla^{T^VX})\wedge\ch(\nabla^E)-\ch(\nabla^L).
\end{equation}
Define the Bismut--Cheeger eta form associated to $\wh{\bbb}^E_t$ by
\begin{equation}\label{eq 3.18}
\wh{\eta}(\EE, \bpi, \LL)=\frac{1}{\sqrt{\pi}}\int^\infty_0\str\bigg(\frac{d\wh{\bbb}^E_t}{dt}e^{-\frac{1}{2\pi i}(\wh{\bbb}^E_t)^2}\bigg)dt.
\end{equation}
By (\ref{eq 3.15}) and (\ref{eq 3.17}), the local FIT for $\DD^{S\otimes E}$ in the MFFL approach states that
$$d\wh{\eta}(\EE, \bpi, \LL)=\int_{X/B}\todd(\nabla^{T^VX})\wedge\ch(\nabla^E)-\ch(\nabla^L).$$

We often employ the following special cases of (\ref{eq 3.13}), (\ref{eq 3.14}) and (\ref{eq 3.18}). Given a twisted spin$^c$ Dirac operator $\DD^{S\otimes E}$, let $(H, s)$ satisfy the AS property for $\DD^{S\otimes E}$ and $\HH$ a geometric bundle defining $\KER\big(\DD^{S\otimes E; \Delta}\big)$, where $\Delta$ is the odd self-adjoint map associated to $(H, s)$ as in (\ref{eq 3.3}). Analogous to (\ref{eq 3.12}), define a map
$$\DDD^{S\otimes E; \Delta}_+(z):\big(\pi_*E\oplus\wh{H}\oplus\ker\big(\DD^{S\otimes E; \Delta}\big)^{\op}\big)^+\to\big(\pi_*E\oplus\wh{H}\oplus\ker\big(\DD^{S\otimes E; \Delta}\big)^{\op}\big)^-$$
by
\begin{equation}\label{eq 3.19}
\DDD^{S\otimes E; \Delta}_+(z)=\begin{pmatrix} \DD^{S\otimes E; \Delta}_+ & zi_- \\ zP_+ & 0 \end{pmatrix},
\end{equation}
where $i_-:\ker\big(\DD^{S\otimes E; \Delta}\big)^-\to(\pi_*E\oplus\wh{H})^-$ and $P_+:(\pi_*E\oplus\wh{H})^+\to\ker\big(\DD^{S\otimes E; \Delta}\big)^+$ are the inclusion map and the even part of the projection map $P$ with respect to (\ref{eq 3.6}), respectively. Then the map
\begin{equation}\label{eq 3.20}
\DDD^{S\otimes E; \Delta}(z):\pi_*E\oplus\wh{H}\oplus\ker\big(\DD^{S\otimes E; \Delta}\big)^{\op}\to\pi_*E\oplus\wh{H}\oplus\ker\big(\DD^{S\otimes E; \Delta}\big)^{\op}
\end{equation}
defined by
$$\DDD^{S\otimes E; \Delta}(z)=\begin{pmatrix} 0 & \DDD^{S\otimes E; \Delta}_+(z)^* \\ \DDD^{S\otimes E; \Delta}_+(z) & 0 \end{pmatrix}$$
is odd self-adjoint. Define a rescaled Bismut superconnection on 
$$\pi_*E\oplus\wh{H}\oplus\ker\big(\DD^{S\otimes E; \Delta}\big)^{\op}\to B$$ 
by
\begin{equation}\label{eq 3.21}
\wh{\bbb}^{E; \Delta}_t=\sqrt{t}\DDD^{S\otimes E; \Delta}(\alpha(t))+\big(\nabla^{\pi_*E, u}\oplus\nabla^{\wh{H}}\oplus\nabla^{\ker(\DD^{S\otimes E; \Delta}), \op}\big)-\frac{c(T)}{4\sqrt{t}}.
\end{equation}
Similar to (\ref{eq 3.15}) and (\ref{eq 3.17}),
\begin{align}
\lim_{t\to\infty}\ch(\wh{\bbb}^{E; \Delta}_t)&=0,\label{eq 3.22}\\
\lim_{t\to 0}\ch(\wh{\bbb}^{E; \Delta}_t)&=\int_{X/B}\todd(\nabla^{T^VX})\wedge\ch(\nabla^E)-\ch(\nabla^{\ker(\DD^{S\otimes E; \Delta})})\label{eq 3.23}.
\end{align}
Define the Bismut--Cheeger eta form associated to $\wh{\bbb}^{E; \Delta}_t$ by
\begin{equation}\label{eq 3.24}
\wh{\eta}\big(\EE, \bpi, \KER\big(\DD^{S\otimes E; \Delta}\big)\big)=\frac{1}{\sqrt{\pi}}\int^\infty_0\str\bigg(\frac{d\wh{\bbb}^{E; \Delta}_t}{dt}e^{-\frac{1}{2\pi i}(\wh{\bbb}^{E; \Delta}_t)^2}\bigg)dt.
\end{equation}
By (\ref{eq 3.22}) and (\ref{eq 3.23}), the local FIT for $\DD^{S\otimes E; \Delta}$ in the MFFL approach states that
$$d\wh{\eta}\big(\EE, \bpi, \KER\big(\DD^{S\otimes E; \Delta}\big)\big)=\int_{X/B}\todd(\nabla^{T^VX})\wedge\ch(\nabla^E)-\ch(\nabla^{\ker(\DD^{S\otimes E; \Delta})}).$$

\section{ASGL meeting MFFL}\label{s 4}

In this section, we compare the ASGL and MFFL approaches to the local FIT without the kernel bundle assumption. We first recall the analytic indexes $\wt{\ind}^a_{\wh{K}}$ and $\ind^a_{\wh{K}}$ in differential $K$-theory defined by these approaches and prove a lemma relating the Bismut--Cheeger eta forms in these approaches in \S\ref{s 4.1}. We then prove that the analytic indexes $\wt{\ind}^a_{\wh{K}}$ and $\ind^a_{\wh{K}}$ coincide in \S\ref{s 4.2}.

\subsection{Analytic indexes in differential $K$-theory}\label{s 4.1}

The geometric model of the differential $K$-theory group $\wh{K}(X)$ given by Freed--Lott \cite[Definition 2.16]{FL10} is defined in terms of generators of the form $\E=(\EE, \omega)$, where $\EE$ is a geometric bundle over $X$ and $\omega\in\frac{\Omega^{\odd}(X)}{\im(d)}$. Two generators $\E$ and $\F$ are equal in $\wh{K}(X)$ if and only if there exist a geometric bundle $\VV$ and a morphism $\alpha:\EE\oplus\VV\to\FF\oplus\VV$ such that
$$\omega_E-\omega_F\equiv\CS\big(\nabla^E\oplus\nabla^V, \alpha^*(\nabla^F\oplus\nabla^V)\big).$$

Note that $\wh{K}(X)$ can also be described in terms of $\Z_2$-graded generators $\E=(\EE, \omega)$, where $\EE$ is $\Z_2$-graded. Two $\Z_2$-graded generators $\E$ and $\F$ are equal in $\wh{K}(X)$ if and if only there exist geometric bundles $\VV$ and $\WW$ and a $\Z_2$-graded morphism $\alpha:\EE\oplus\wh{\VV}\to\FF\oplus\wh{\WW}$ such that
\begin{equation}\label{eq 4.1}
\omega_E-\omega_F\equiv\CS\big(\nabla^E\oplus\nabla^{\wh{V}}, \alpha^*(\nabla^F\oplus\nabla^{\wh{W}})\big).
\end{equation}

Let $\pi:X\to B$ be a submersion with closed, oriented and spin$^c$ fibers of even dimension, equipped with a Riemannian and differential spin$^c$ structure $\bpi$. Let $\E=(\EE, \omega)$ be a generator of $\wh{K}(X)$. Denote by $\DD^{S\otimes E}$ the twisted spin$^c$ Dirac operator associated to $(\EE, \bpi)$. Let $(H, s)$ satisfy the AS property for $\DD^{S\otimes E}$ and $\HH$ a geometric bundle defining $\KER\big(\DD^{S\otimes E; \Delta}\big)$, where $\Delta$ is the odd self-adjoint map associated to $(H, s)$ as in (\ref{eq 3.3}). The element $\wt{\ind}^a_{\wh{K}}(\E)$ of $\wh{K}(B)$ defined by
\begin{equation}\label{eq 4.2}
\wt{\ind}^a_{\wh{K}}(\E)=\bigg(\KER\big(\DD^{S\otimes E; \Delta}\big), \int_{X/B}\todd(\nabla^{T^VX})\wedge\omega+\wt{\eta}(\EE, \bpi, \HH, s)\bigg)
\end{equation}
is a special case of \cite[Definition 7]{GL18}. \emph{A prior} $\wt{\ind}^a_{\wh{K}}(\E)$ depends on the choices of the pair $(H, s)$ satisfying the AS property for $\DD^{S\otimes E}$ and the geometric bundle $\HH$.

Recall that the analytic index $\ind^a_{\wh{K}}(\E)$ in differential $K$-theory \cite[(7.28)]{FL10} is defined to be
\begin{equation}\label{eq 4.3}
\ind^a_{\wh{K}}(\E)=\bigg(\LL, \int_{X/B}\todd(\nabla^{T^VX})\wedge\omega+\wh{\eta}(\EE, \bpi, \LL)\bigg),
\end{equation}
where $\LL=(L, g^L, \nabla^L)$ is a $\Z_2$-graded geometric bundle over $B$ so that $L\to B$ satisfies the MF property for $\DD^{S\otimes E}$. Note that $\ind^a_{\wh{K}}(\E)$ is independent of the choice of $\LL$ for which $L\to B$ satisfies the MF property for $\DD^{S\otimes E}$ \cite[(3) of Corollary 7.36]{FL10} (see also \cite[Corollary 1]{H20}). Moreover, $$\ind^a_{\wh{K}}:\wh{K}(X)\to\wh{K}(B)$$
is a well defined group homomorphism \cite[(2) of Corollary 7.36]{FL10} (see also \cite[Proposition 1.4]{H23}).

The following lemma roughly says the Bismut--Cheeger eta form $\wt{\eta}$ (\ref{eq 3.10}) in the ASGL approach is a special case of the Bismut--Cheeger eta form $\wh{\eta}$ (\ref{eq 3.24}) in the MFFL approach.
\begin{lemma}\label{lemma 4.1}
Let $\pi:X\to B$ be a submersion with closed, oriented and spin$^c$ fibers of even dimension, equipped with a Riemannian and differential spin$^c$ structure $\bpi$. Let $\EE=(E, g^E, \nabla^E)$ be a geometric bundle over $X$. Denote by $\DD^{S\otimes E}$ the twisted spin$^c$ Dirac operator associated to $(\EE, \bpi)$. Let $(H, s)$ satisfy the AS property for $\DD^{S\otimes E}$ and $\HH$ a geometric bundle defining $\KER\big(\DD^{S\otimes E; \Delta}\big)$, where $\Delta$ is the odd self-adjoint map associated to $(H, s)$ as in (\ref{eq 3.3}). Then
\begin{equation}\label{eq 4.4}
\wh{\eta}\big(\EE, \bpi, \KER\big(\DD^{S\otimes E; \Delta}\big)\big)\equiv\wt{\eta}(\EE, \bpi, \HH, s).
\end{equation}
\end{lemma}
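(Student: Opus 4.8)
The plan is to interpolate between the MFFL-type Bismut superconnection $\wh{\bbb}^{E;\Delta}_t$ in \eqref{eq 3.21}, whose associated eta form is $\wh{\eta}\big(\EE, \bpi, \KER\big(\DD^{S\otimes E; \Delta}\big)\big)$, and the ASGL-type superconnection $\wt{\bbb}^{E;\Delta}_t$ in \eqref{eq 3.7}, whose eta form is $\wt{\eta}(\EE, \bpi, \HH, s)$, and to show that the transgression term produced by the interpolation lies in $\im(d)$. Concretely, the key observation is that the operator $\DD^{S\otimes E;\Delta}$ already has the kernel bundle $\ker\big(\DD^{S\otimes E;\Delta}\big)\to B$ existing as a genuine $\Z_2$-graded bundle, thanks to \eqref{eq 3.4}, \eqref{eq 3.5}, \eqref{eq 3.6}; so the MFFL construction \eqref{eq 3.19}--\eqref{eq 3.21} applied to $\DD^{S\otimes E;\Delta}$ is precisely the MFFL construction in the \emph{kernel-bundle case} with $L=\ker\big(\DD^{S\otimes E;\Delta}\big)$ and $K=\im\big(\DD^{S\otimes E;\Delta}\big)$. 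In that case the equivalence of the two eta forms is essentially classical: it is the statement that, for a Dirac-type operator whose kernels form a bundle, the Bismut--Cheeger eta form built from the invertible perturbation $\DDD(z)$ and the one built from the unperturbed Bismut superconnection agree modulo exact forms.

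First I would set up a two-parameter family. On the bundle $\pi_*E\oplus\wh{H}\oplus\ker\big(\DD^{S\otimes E; \Delta}\big)^{\op}\to B$ consider, for $(s,t)\in[0,1]\times(0,\infty)$, a superconnection $\bbb_{s,t}$ that equals $\wh{\bbb}^{E;\Delta}_t\oplus\nabla^{\ker,\op}$-type data at $s=0$ and decouples as $\wt{\bbb}^{E;\Delta}_t\oplus(\text{something contractible})$ at $s=1$; the natural choice is to scale the coupling parameter in $\DDD^{S\otimes E;\Delta}_+(z)$ by $s$, i.e. replace $\alpha(t)$ by $s\,\alpha(t)$, so that at $s=0$ the extra summand $\ker^{\op}$ genuinely decouples with the connection $\nabla^{\ker,\op}$ whose Chern character and eta contribution are understood. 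The standard transgression identity (as in \cite[Ch.~9]{BGV}) gives
\[
\frac{\partial}{\partial s}\,\frac{1}{\sqrt{\pi}}\int_0^\infty \str\Big(\tfrac{d\bbb_{s,t}}{dt}\,e^{-\frac{1}{2\pi i}\bbb_{s,t}^2}\Big)\,dt
\ \equiv\ \text{(boundary contributions at $t\to0$ and $t\to\infty$)} ,
\]
where I would check that the $t\to 0$ limit of $\ch(\bbb_{s,t})$ is $s$-independent (it is always $\int_{X/B}\todd(\nabla^{T^VX})\wedge\ch(\nabla^E)$, since the $s$-dependent perturbation is a zeroth-order, fiberwise finite-rank term) and that the $t\to\infty$ limit is $0$ for $s>0$ (invertibility of $\DDD$) and is $\ch(\nabla^{\ker(\DD^{S\otimes E;\Delta})})-\ch(\nabla^{\ker(\DD^{S\otimes E;\Delta})})=0$ at $s=0$ by the decoupling. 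Hence the $s$-derivative of the eta form is exact, and integrating from $s=0$ to $s=1$ yields \eqref{eq 4.4} up to $\im(d)$; the opposite-grading summand $\ker^{\op}$ contributes a vanishing eta form by an argument parallel to \eqref{eq 2.4} (its superconnection is of split-quadruple type once the coupling is turned off).

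The main obstacle I anticipate is \emph{not} the formal transgression but the analytic bookkeeping of the $t\to 0$ and $t\to\infty$ limits \emph{uniformly in $s$}, together with the interchange of $\partial_s$ with $\int_0^\infty dt$. The $t\to\infty$ behavior is the delicate one: for $s>0$ the perturbed operator $\DDD^{S\otimes E;\Delta}(s\alpha(t))$ is invertible for $t\ge 1$, but the lower bound on its smallest eigenvalue degenerates as $s\to 0$, so the large-$t$ decay estimates on the heat-type operator are not uniform down to $s=0$; one must either (i) split the $s$-integral away from $s=0$ and handle a neighborhood of $s=0$ separately using the explicit decoupled form there, or (ii) rescale $t$ with $s$ to absorb the degeneration. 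I expect approach (i) to be cleanest: for $s$ bounded away from $0$ the estimates of \cite[Ch.~9--10]{BGV} apply verbatim, and near $s=0$ the superconnection is a small perturbation of the decoupled $\wt{\bbb}^{E;\Delta}_t\oplus\nabla^{\ker,\op}$, for which the limits are already recorded in \eqref{eq 3.8}, \eqref{eq 3.9}, \eqref{eq 2.4}. Assembling these pieces gives \eqref{eq 4.4}.
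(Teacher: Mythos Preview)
Your strategy is sound and would work, but the paper takes a shorter route that sidesteps precisely the uniformity issue you flag. The crucial observation you are not exploiting is that, by the very definition of the cutoff $\alpha$ (namely $\alpha(t)=0$ for $t\le a$), the two superconnections agree \emph{on the nose} for small $t$:
\[
\wh{\bbb}^{E;\Delta}_t \;=\; \wt{\bbb}^{E;\Delta}_t \oplus \nabla^{\ker(\DD^{S\otimes E;\Delta}),\op}
\qquad\text{for all }t\in(0,a).
\]
This is exactly \eqref{eq 4.5} in the paper. Once you have this, no two-parameter family is needed: for $t<a$ and $T>1$ the cocycle and additivity properties \eqref{eq 2.9}--\eqref{eq 2.12} of the Chern--Simons form give
\[
\CS\big(\wh{\bbb}^{E;\Delta}_t, \wh{\bbb}^{E;\Delta}_T\big)-\CS\big(\wt{\bbb}^{E;\Delta}_t, \wt{\bbb}^{E;\Delta}_T\big)
\;\equiv\;
-\CS\big(\wh{\bbb}^{E;\Delta}_T,\ \wt{\bbb}^{E;\Delta}_T\oplus\nabla^{\ker,\op}\big),
\]
and letting $t\to 0$, $T\to\infty$ turns the left side into $\wh{\eta}-\wt{\eta}$ while the right side tends to $0$ by the large-time estimates of \cite[\S9.3]{BGV} (both superconnections have the same large-$T$ limit $\nabla^{\ker}\oplus\nabla^{\ker,\op}$). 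That is the whole proof.

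In comparison, your interpolation $\alpha(t)\mapsto s\,\alpha(t)$ is the ``right'' homotopy, and your diagnosis of the difficulty (the spectral gap of $\DDD^{S\otimes E;\Delta}(s)$ collapsing as $s\to 0$) is accurate; either of your suggested fixes (i) or (ii) can be made to work. What the paper's argument buys is that it replaces the two-parameter transgression and its delicate uniform-in-$s$ estimates by a \emph{single} large-$T$ limit of one Chern--Simons form, for which the standard BGV asymptotics apply directly. Your approach buys a more conceptual ``homotopy invariance of the eta form'' statement, but at the cost of the extra analysis. A minor slip: in your first sentence describing $\bbb_{s,t}$ you have the roles of $s=0$ and $s=1$ reversed relative to your scaling $\alpha(t)\mapsto s\,\alpha(t)$; with that scaling it is $s=0$ that decouples to $\wt{\bbb}^{E;\Delta}_t\oplus\nabla^{\ker,\op}$ and $s=1$ that gives $\wh{\bbb}^{E;\Delta}_t$.
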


Lemma \ref{lemma 4.1} is used several times in the remainder of this paper.
\begin{proof}
Let $t\in(0, a)$. By (\ref{eq 3.7}), (\ref{eq 3.21}) and the definition of $\alpha$,
\begin{equation}\label{eq 4.5}
\wh{\bbb}^{E; \Delta}_t=\wt{\bbb}^{E; \Delta}_t\oplus\nabla^{\ker(\DD^{S\otimes E; \Delta}), \op}.
\end{equation}
Let $T\in(1, \infty)$. By (\ref{eq 2.9}) to (\ref{eq 2.12}) and (\ref{eq 4.5}),
\begin{displaymath}
\begin{split}
&\CS\big(\wh{\bbb}^{E; \Delta}_t, \wh{\bbb}^{E; \Delta}_T\big)-\CS\big(\wt{\bbb}^{E; \Delta}_t, \wt{\bbb}^{E; \Delta}_T\big)\\
&\equiv\CS\big(\wh{\bbb}^{E; \Delta}_t, \wh{\bbb}^{E; \Delta}_T\big)-\CS\big(\wt{\bbb}^{E; \Delta}_t, \wt{\bbb}^{E; \Delta}_T\big)-\CS\big(\nabla^{\ker(\DD^{S\otimes E; \Delta}), \op}, \nabla^{\ker(\DD^{S\otimes E; \Delta}), \op}\big)\\
&\equiv-\CS\big(\wh{\bbb}^{E; \Delta}_T, \wh{\bbb}^{E; \Delta}_t\big)-\CS\big(\wt{\bbb}^{E; \Delta}_t\oplus\nabla^{\ker(\DD^{S\otimes E; \Delta}), \op}, \wt{\bbb}^{E; \Delta}_T\oplus\nabla^{\ker(\DD^{S\otimes E; \Delta}), \op}\big)\\
&\equiv-\CS\big(\wh{\bbb}^{E; \Delta}_T, \wt{\bbb}^{E; \Delta}_T\oplus\nabla^{\ker(\DD^{S\otimes E; \Delta}), \op}\big).
\end{split}
\end{displaymath}
By letting $t\to 0$ and $T\to\infty$ in above,
$$\wh{\eta}\big(\EE, \bpi, \KER\big(\DD^{S\otimes E; \Delta}\big)\big)-\wt{\eta}(\EE, \bpi, \HH, s)\equiv-\lim_{T\to\infty}\CS\big(\wh{\bbb}^{E; \Delta}_T, \wt{\bbb}^{E; \Delta}_T\oplus\nabla^{\ker(\DD^{S\otimes E; \Delta}), \op}\big).$$
By the estimates in \cite[\S9.3]{BGV},
$$\lim_{T\to\infty}\CS\big(\wh{\bbb}^{E; \Delta}_T, \wt{\bbb}^{E; \Delta}_T\oplus\nabla^{\ker(\DD^{S\otimes E; \Delta}), \op}\big)=0.$$
The (\ref{eq 4.4}) holds.
\end{proof}

\subsection{The equivalence of the analytic indexes}\label{s 4.2}

We prove that the analytic indexes $\wt{\ind}^a_{\wh{K}}$ and $\ind^a_{\wh{K}}$ defined by the ASGL and MFFL approaches, respectively, are equal.
\begin{prop}\label{prop 4.1}
Let $\pi:X\to B$ be a submersion with closed, oriented and spin$^c$ fibers of even dimension, equipped with a Riemannian and differential spin$^c$ structure $\bpi$. Then
$$\wt{\ind}^a_{\wh{K}}=\ind^a_{\wh{K}}:\wh{K}(X)\to\wh{K}(B).$$
\end{prop}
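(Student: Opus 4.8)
The plan is to prove the equality of the two differential-$K$-theory indexes generator by generator, reducing everything to the comparison of Bismut--Cheeger eta forms established in Lemma~\ref{lemma 4.1}. Fix a generator $\E=(\EE,\omega)$ of $\wh{K}(X)$ and let $\DD^{S\otimes E}$ be the associated twisted spin$^c$ Dirac operator. The key observation is that a choice of a pair $(H,s)$ satisfying the AS property for $\DD^{S\otimes E}$ produces, via (\ref{eq 3.4})--(\ref{eq 3.6}), a $\Z_2$-graded geometric bundle $\KER\big(\DD^{S\otimes E;\Delta}\big)$ whose underlying bundle $L\to B$ satisfies the MF property for $\DD^{S\otimes E}$; indeed, with respect to the orthogonal decomposition $\pi_*E\oplus\wh{H}=\im\big(\DD^{S\otimes E;\Delta}\big)\oplus\ker\big(\DD^{S\otimes E;\Delta}\big)$, the map $\DD^{S\otimes E;\Delta}$ restricts to an isomorphism on the image part, which is exactly the MF condition (\ref{eq 3.11}) — up to the harmless extra summand $\wh{H}$ on the domain side, which is elementary (hence $\Z_2$-graded-trivial in $\wh{K}$). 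So I would first check that we may compute $\ind^a_{\wh{K}}(\E)$ using $\LL=\KER\big(\DD^{S\otimes E;\Delta}\big)$, invoking the independence of $\ind^a_{\wh{K}}$ from the choice of MF bundle (\cite[(3) of Corollary 7.36]{FL10}).

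With that choice in hand, both indexes have the \emph{same} geometric bundle component, namely $\KER\big(\DD^{S\otimes E;\Delta}\big)$, so the only discrepancy is in the differential-form component:
\begin{displaymath}
\wt{\ind}^a_{\wh{K}}(\E)-\ind^a_{\wh{K}}(\E)=\Big(0,\ \wt{\eta}(\EE,\bpi,\HH,s)-\wh{\eta}\big(\EE,\bpi,\KER\big(\DD^{S\otimes E;\Delta}\big)\big)\Big)
\end{displaymath}
in $\wh{K}(B)$, where the $\todd(\nabla^{T^VX})\wedge\omega$ terms cancel identically. By Lemma~\ref{lemma 4.1} the two eta forms agree modulo $\im(d)$, so this difference is $(0,0)$ in $\wh{K}(B)$ — using the flatness relation (\ref{eq 4.1}) with $\VV=\WW=0$ and $\alpha=\id$. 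Hence $\wt{\ind}^a_{\wh{K}}(\E)=\ind^a_{\wh{K}}(\E)$ for every generator. Since $\ind^a_{\wh{K}}$ is already known to be a well-defined group homomorphism, this forces $\wt{\ind}^a_{\wh{K}}$ to descend to the same homomorphism on all of $\wh{K}(X)$, giving the claimed equality $\wt{\ind}^a_{\wh{K}}=\ind^a_{\wh{K}}$.

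The main obstacle is the bookkeeping in the first step: one must verify carefully that the bundle $\ker\big(\DD^{S\otimes E;\Delta}\big)\to B$, which a priori lives inside $\pi_*E\oplus\wh{H}$ rather than inside $\pi_*E$, genuinely qualifies as an MF representative after accounting for the elementary summand $\wh{H}$, and that the metric and connection it inherits (via the projection $P$ in \S\ref{s 3.2}) match the ones prescribed by the MF construction up to an isometric isomorphism whose Chern--Simons correction is trivial by (\ref{eq 2.14}). A secondary point to confirm is well-definedness of $\wt{\ind}^a_{\wh{K}}(\E)$ itself — a priori it depends on $(H,s)$ and on $\HH$ — but this follows a posteriori once the equality with $\ind^a_{\wh{K}}(\E)$ is established, since the right-hand side is manifestly independent of those choices. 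No estimates beyond those already imported from \cite[\S9.3]{BGV} in the proof of Lemma~\ref{lemma 4.1} are needed.
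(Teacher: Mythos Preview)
There is a genuine gap in your first step, and it is not mere bookkeeping. The independence result you invoke, \cite[(3) of Corollary 7.36]{FL10}, compares different MF bundles for the \emph{same} operator $\DD^{S\otimes E}$ acting on $\pi_*E$. What you need is something stronger: that the MFFL-style construction applied to the \emph{perturbed} operator $\DD^{S\otimes E;\Delta}$ on the enlarged bundle $\pi_*E\oplus\wh{H}$, with $\ker\big(\DD^{S\otimes E;\Delta}\big)$ as the MF bundle and eta form given by (\ref{eq 3.24}), yields the same class in $\wh{K}(B)$ as the MFFL construction (\ref{eq 4.3}) for $\DD^{S\otimes E}$ on $\pi_*E$ with some genuine MF bundle $L\subset\pi_*E$ and eta form (\ref{eq 3.18}). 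These are different operators on different bundles, with different superconnections; the Freed--Lott independence statement says nothing about this comparison. Your suggestion that the Chern--Simons correction is trivial by (\ref{eq 2.14}) is not right either: there \emph{is} a nontrivial correction, and identifying it is exactly the content one must supply.

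The paper's proof fills this gap by a deformation argument. It builds a family over $\wt{B}=B\times I$ whose operator interpolates between $\DD^{S\otimes E}\oplus 0$ at $t=0$ and $\DD^{S\otimes E;\Delta}$ at $t=1$ (see (\ref{eq 4.10})), chooses a single MF bundle $\LLL\to\wt{B}$ for this entire family, and then compares the restrictions at the two endpoints. At $t=0$, the restriction $\wt{L}_0$ is MF for $\DD^{S\otimes E}\oplus 0$, hence comparable to $L\oplus\wh{H}$ via the genuine Freed--Lott independence; at $t=1$, $\wt{L}_1$ is MF for $\DD^{S\otimes E;\Delta}$, hence comparable to $\ker\big(\DD^{S\otimes E;\Delta}\big)$. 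Integrating the exterior derivative of the eta form over the interval produces the required Chern--Simons term relating the two endpoint indexes. Only after this is Lemma~\ref{lemma 4.1} invoked, at the $t=1$ endpoint, to pass from the MFFL eta form $\wh{\eta}\big(\EE,\bpi,\KER\big(\DD^{S\otimes E;\Delta}\big)\big)$ to the ASGL eta form $\wt{\eta}(\EE,\bpi,\HH,s)$. So Lemma~\ref{lemma 4.1} is one ingredient, but not the whole story; the deformation from the unperturbed to the perturbed operator is the missing piece in your outline.
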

\begin{proof}
Let $\E=(\EE, \omega)$ be a generator of $\wh{K}(X)$. Denote by $\DD^{S\otimes E}$ the twisted spin$^c$ Dirac operator associated to $(\EE, \bpi)$. Let $(H, s)$ satisfy the AS property for $\DD^{S\otimes E}$ and $\HH$ a geometric bundle defining $\KER\big(\DD^{S\otimes E; \Delta}\big)$, where $\Delta$ is the odd self-adjoint map associated to $(H, s)$ as in (\ref{eq 3.3}). We show that
\begin{equation}\label{eq 4.6}
\wt{\ind}^a_{\wh{K}}(\E)=\ind^a_{\wh{K}}(\E).
\end{equation}
Since $\ind^a_{\wh{K}}(\E)$ does not depend on the choices of $(H, s)$ and $\HH$, and $\ind^a_{\wh{K}}:\wh{K}(X)\to\wh{K}(B)$ is a well defined group homomorphism (by \cite[Proposition 1.4]{H23}), it follows from (\ref{eq 4.6}) that the same are true for $\wt{\ind}^a_{\wh{K}}$.

Let $\LL=(L, g^L, \nabla^L)$ be a $\Z_2$-graded geometric bundle over $B$ for which $L\to B$ satisfies the MF property for $\DD^{S\otimes E}$. To prove (\ref{eq 4.6}), by (\ref{eq 4.1}) to (\ref{eq 4.3}) it suffices to show that there exist geometric bundles $\VV_0$ and $\VV_1$ over $B$ and a $\Z_2$-graded morphism
$$\wt{h}:\LL\oplus\wh{\VV}_0\to\KER\big(\DD^{S\otimes E; \Delta}\big)\oplus\wh{\VV}_1$$
such that
\begin{equation}\label{eq 4.7}
\wh{\eta}(\EE, \bpi, \LL)-\wt{\eta}(\EE, \bpi, \HH, s)\equiv\CS\big(\nabla^L\oplus\nabla^{\wh{V}_0}, \wt{h}^*(\nabla^{\ker(\DD^{S\otimes E; \Delta})}\oplus\nabla^{\wh{V}_1})\big).
\end{equation}

Let $\wt{\bpi}$ be the Riemannian and differential spin$^c$ structure on $\wt{\pi}:\wt{X}\to\wt{B}$ obtained by pulling back $\bpi$ via $p_X$. Define a $\Z_2$-graded geometric bundle $\bee$ over $\wt{X}$ by $\bee=p_X^*\EE$. Denote by  $\DD^{S\otimes\e}$ the twisted spin$^c$ Dirac operator associated to $(\bee, \wt{\bpi})$. Define a geometric bundle $\bhh$ over $\wt{B}$ by $\bhh=p_B^*\HH$.

Fix a smooth increasing function $\chi:I\to I$ satisfying $\chi(t)=0$ for $t\in[0, \frac{1}{4}]$ and $\chi(t)=1$ for $t\in[\frac{3}{4}, 1]$. The map $\wt{\Delta}:\wt{\pi}_*\e\oplus\wh{\hh}\to\wt{\pi}_*\e\oplus\wh{\hh}$ defined by
$$\wt{\Delta}(x, t)=(\chi(t)\Delta(x), t)$$
is odd self-adjoint, and the same is true for $\DD^{S\otimes\e; \wt{\Delta}}:=\DD^{S\otimes\e}+\wt{\Delta}$.

By applying \cite[Lemma 2.3]{MF79} to $\DD^{S\otimes\e; \wt{\Delta}}$, let $\bLLL=(\LLL, g^\LLL, \nabla^\LLL)$ be a $\Z_2$-graded geometric bundle over $\wt{B}$ for which $\LLL\to\wt{B}$ satisfies the MF property for $\DD^{S\otimes\e; \wt{\Delta}}$, i.e. there exists a $\Z_2$-graded closed complementary subbundle $\KKK\to\wt{B}$ of $\wt{\pi}_*\e\oplus\wh{\hh}\to\wt{B}$ to $\LLL\to\wt{B}$:
\begin{equation}\label{eq 4.8}
\begin{split}
(\wt{\pi}_*\e\oplus\wh{\hh})^+&=\KKK^+\oplus\LLL^+,\\
(\wt{\pi}_*\e\oplus\wh{\hh})^-&=\KKK^-\oplus\LLL^-,
\end{split}
\end{equation}
$\DD^{S\otimes\e; \wt{\Delta}}_+:(\wt{\pi}_*\e\oplus\wh{\hh})^+\to(\wt{\pi}_*\e\oplus\wh{\hh})^-$ is block diagonal as a map with respect to (\ref{eq 4.8}) and the restriction $\DD^{S\otimes\e; \wt{\Delta}}_+|_{\KKK^+}:\KKK^+\to\KKK^-$ is an isomorphism. Note that
$$i_{B, 0}^*(\wt{\pi}_*\e\oplus\wh{\hh})\cong i_{B, 1}^*(\wt{\pi}_*\e\oplus\wh{\hh})=\pi_*E\oplus\wh{H}.$$
Let $j\in\set{0, 1}$. Define a $\Z_2$-graded geometric bundle $\wt{\LL}_j$ over $B$ by $\wt{\LL}_j=i_{B, j}^*\bLLL$ and a complex vector bundle $K_j\to B$ by $K_j=i_{B, j}^*\KKK$. Note that
\begin{equation}\label{eq 4.9}
\wt{L}_0=i_{B, 0}^*\LLL\cong i_{B, 1}^*\LLL=\wt{L}_1
\end{equation}
as $\Z_2$-graded complex vector bundles. Since
\begin{equation}\label{eq 4.10}
\begin{split}
\DD_0:=\DD^{S\otimes\e; \wt{\Delta}}|_{i_{B, 0}^*(\wt{\pi}_*\e\oplus\wh{\hh})}&=\DD^{S\otimes E}\oplus 0,\\
\DD_1:=\DD^{S\otimes\e; \wt{\Delta}}|_{i_{B, 1}^*(\wt{\pi}_*\e\oplus\wh{\hh})}&=\DD^{S\otimes E; \Delta},
\end{split}
\end{equation}
it follows from (\ref{eq 4.8}) to (\ref{eq 4.10}) that
\begin{equation}\label{eq 4.11}
(\pi_*E\oplus\wh{H})^+=K_j^+\oplus\wt{L}_j^+,\qquad(\pi_*E\oplus\wh{H})^-=K_j^-\oplus\wt{L}_j^-,
\end{equation}
$\DD_{j, +}:(\pi_*E\oplus\wh{H})^+\to(\pi_*E\oplus\wh{H})^-$ is block diagonal as a map with respect to (\ref{eq 4.11}) and the restriction $\DD_{j, +}|_{K_j^+}:K_j^+\to K_j^-$ is an isomorphism. Thus
\begin{align}
\wt{L}_0\to B&\textrm{ satisfies the MF property for }\DD_0=\DD^{S\otimes E}\oplus 0,\label{eq 4.12}\\
\wt{L}_1\to B&\textrm{ satisfies the MF property for }\DD_1=\DD^{S\otimes E; \Delta}.\label{eq 4.13}
\end{align}

Since $L\to B$ satisfies the MF property for $\DD^{S\otimes E}$, it follows that
\begin{equation}\label{eq 4.14}
L\oplus\wh{H}\to B\textrm{ satisfies the MF property for }\DD^{S\otimes E}\oplus 0.
\end{equation}
By (\ref{eq 4.12}), (\ref{eq 4.14}) and \cite[Remark 4.6]{H23}, there exist complex vector bundles $V_0\to B$ and $V_1\to B$ such that
\begin{equation}\label{eq 4.15}
(L\oplus\wh{H})\oplus\wh{V}_0\cong\wt{L}_0\oplus\wh{V}_1
\end{equation}
as $\Z_2$-graded complex vector bundles. On the other hand, since
$$\ker\big(\DD^{S\otimes E; \Delta}\big)\to B\textrm{ satisfies the MF property for }\DD^{S\otimes E; \Delta},$$
it follows from (\ref{eq 4.13})  and \cite[Remark 4.6]{H23} that there exist complex vector bundles $W_0\to B$ and $W_1\to B$ such that
\begin{equation}\label{eq 4.16}
\wt{L}_1\oplus\wh{W}_0\cong\ker\big(\DD^{S\otimes E; \Delta}\big)\oplus\wh{W}_1
\end{equation}
as $\Z_2$-graded complex vector bundles. By (\ref{eq 4.15}), (\ref{eq 4.9}) and (\ref{eq 4.16}),
\begin{displaymath}
\begin{split}
L\oplus\wh{H}\oplus\wh{V}_0\oplus\wh{W}_0&\cong\wt{L}_0\oplus\wh{V}_1\oplus\wh{W}_0\\
&\cong\wt{L}_1\oplus\wh{V}_1\oplus\wh{W}_0\\
&\cong\ker\big(\DD^{S\otimes E; \Delta}\big)\oplus\wh{W}_1\oplus\wh{V}_1
\end{split}
\end{displaymath}
as $\Z_2$-graded complex vector bundles. By writing
\begin{displaymath}
\begin{split}
V_0&\textrm{ for }H\oplus V_0\oplus W_0,\\
V_1&\textrm{ for }W_1\oplus V_1,\\
W&\textrm{ for }V_1\oplus W_0,
\end{split}
\end{displaymath}
the above isomorphisms become
\begin{equation}\label{eq 4.17}
L\oplus\wh{V}_0\cong\wt{L}_0\oplus\wh{W}\cong\wt{L}_1\oplus\wh{W}\cong\ker\big(\DD^{S\otimes E; \Delta}\big)\oplus\wh{V}_1.
\end{equation}

By (\ref{eq 4.9}) and \cite[(2) of Remark 2.1]{H23}, let
$$h_1:\wt{\LL}_0\to\wt{\LL}_1$$
be a $\Z_2$-graded morphism. Put Hermitian metrics and unitary connections on $V_j\to B$ and $W\to B$, and denote by $\VV_j$ and $\WW$ the resulting geometric bundles, respectively. By (\ref{eq 4.17}) and \cite[(2) of Remark 2.1]{H23}, let
\begin{displaymath}
\begin{split}
h_0&:\wt{\LL}_0\oplus\wh{\WW}\to\LL\oplus\wh{\VV}_0,\\ 
h_2&:\wt{\LL}_1\oplus\wh{\WW}\to\KER\big(\DD^{S\otimes E; \Delta}\big)\oplus\wh{\VV}_1
\end{split}
\end{displaymath}
be $\Z_2$-graded morphisms. Define a $\Z_2$-graded morphism
$$\wt{h}:\LL\oplus\wh{\VV}_0\to\KER\big(\DD^{S\otimes E; \Delta}\big)\oplus\wh{\VV}_1$$
to be the composition
\begin{center}
\begin{tikzcd}
\LL\oplus\wh{\VV}_0 \arrow{r}{h_0^{-1}} & \wt{\LL}_0\oplus\wh{\WW} \arrow{rr}{h_1\oplus\id_{\wh{\WW}}} & &  \wt{\LL}_1\oplus\wh{\WW} \arrow{r}{h_2} & \KER(\DD^{S\otimes E; \Delta})\oplus\wh{\VV}_1
\end{tikzcd}
\end{center}
Note that
\begin{equation}\label{eq 4.18}
h_0^*\big(\nabla^L\oplus\nabla^{\wh{V}_0}\big)\quad\textrm{ and }\quad h_0^*\wt{h}^*\big(\nabla^{\ker(\DD^{S\otimes E; \Delta})}\oplus\nabla^{\wh{V}_1}\big)
\end{equation}
are $\Z_2$-graded unitary connections on $\wt{L}_0\oplus\wh{W}\to B$.

Define a $\Z_2$-graded geometric bundle $\bRRR=(\RRR, g^\RRR, \nabla^\RRR)$ over $\wt{B}$ by
$$\RRR=p_B^*\big(\wt{L}_0\oplus\wh{W}\big),\qquad g^\RRR=p_B^*\big(g^{\wt{L}_0}\oplus g^{\wh{W}}\big)$$
and $\nabla^\RRR$ by (\ref{eq 2.6}) and (\ref{eq 2.7}) with respect to (\ref{eq 4.18}). Note that
\begin{equation}\label{eq 4.19}
i_{B, 0}^*\nabla^\RRR=h_0^*\big(\nabla^L\oplus\nabla^{\wh{V}_0}\big)\quad\textrm{ and }\quad i_{B, 1}^*\nabla^\RRR=h_0^*\wt{h}^*\big(\nabla^{\ker(\DD^{S\otimes E; \Delta})}\oplus\nabla^{\wh{V}_1}\big).
\end{equation}

Define a geometric bundle $\bWWW$ over $\wt{B}$ by $\bWWW=p_B^*\WW$. Since $i_{B, 0}\circ p_B:\wt{B}\to\wt{B}$ is smoothly homotopic to $\id_{\wt{B}}$, it follows that $p_B^*\wt{L}_0\cong\LLL$. Thus
$$\LLL\oplus\wh{\WWW}\cong p_B^*\big(\wt{L}_0\oplus\wh{W}\big)\cong\RRR$$
as $\Z_2$-graded complex vector bundles. By \cite[(2) of Remark 2.1]{H23}, let $\varphi:\bLLL\to p_B^*\wt{\LL}_0$ be a $\Z_2$-graded morphism. Define a $\Z_2$-graded morphism $\wt{\varphi}:\bLLL\oplus\wh{\bWWW}\to\bRRR$ by $\wt{\varphi}=\varphi\oplus\id_{\wh{\bWWW}}$ and a $\Z_2$-graded unitary connection on $\LLL\oplus\wh{\WWW}\to\wt{B}$ by
\begin{equation}\label{eq 4.20}
\wt{\nabla}^{\LLL\oplus\wh{\WWW}}=\wt{\varphi}^*\nabla^\RRR.
\end{equation}
Henceforth, $\bLLL\oplus\wh{\bWWW}$ is regarded as the $\Z_2$-graded geometric bundle
$$\big(\LLL\oplus\wh{\WWW}, g^\LLL\oplus g^{\wh{\WWW}}, \wt{\nabla}^{\LLL\oplus\wh{\WWW}}\big).$$

Let $\PP_+:(\wt{\pi}_*\e\oplus\wh{\hh})^+\to\LLL^+$ and $\I_-:\LLL^-\to(\wt{\pi}_*\e\oplus\wh{\hh})^-$ be the projection and the inclusion maps with respect to (\ref{eq 4.8}), respectively. Let $z\in\C$. As in (\ref{eq 3.19}), define a map $\DDD^{S\otimes\e; \wt{\Delta}}_+(z):(\wt{\pi}_*\e\oplus\wh{\hh}\oplus\LLL^{\op})^+\to(\wt{\pi}_*\e\oplus\wh{\hh}\oplus\LLL^{\op})^-$ by
$$\DDD^{S\otimes\e; \wt{\Delta}}_+(z)=\begin{pmatrix} \DD^{S\otimes\e; \wt{\Delta}}_+ & z\I_- \\ z\PP_+ & 0 \end{pmatrix}.$$
Then the map $\DDD^{S\otimes\e; \wt{\Delta}}(z):\wt{\pi}_*\e\oplus\wh{\hh}\oplus\LLL^{\op}\to\wt{\pi}_*\e\oplus\wh{\hh}\oplus\LLL^{\op}$ defined as in (\ref{eq 3.20}) is odd self-adjoint. Define a map $s_{\wh{\WWW}}:\wh{\WWW}\to\wh{\WWW}$ by (\ref{eq 2.2}) and a rescaled Bismut superconnection on $\wt{\pi}_*\e\oplus\wh{\hh}\oplus(\LLL\oplus\wh{\WWW})^{\op}\to\wt{B}$ by
\begin{equation}\label{eq 4.21}
\wh{\bbb}_t=\sqrt{t}\big(\DDD^{S\otimes\e; \wt{\Delta}}(\alpha(t))\oplus\alpha(t)s_{\wh{\WWW}}\big)+\big(\nabla^{\wt{\pi}_*\e, u}\oplus\nabla^{\wh{\hh}}\oplus\wt{\nabla}^{\LLL\oplus\wh{\WWW}, \op}\big)-\frac{c(\wt{T})}{4\sqrt{t}}.
\end{equation}
Since $(\wh{\bbb}_t)_{[0]}$ is invertible for $t\geq 1$,
\begin{equation}\label{eq 4.22}
\lim_{t\to\infty}\ch(\wh{\bbb}_t)=0.
\end{equation}
On the other hand, for $t\leq a$, $\wh{\bbb}^{\e; \wt{\Delta}}_t$ decouples, i.e.
\begin{displaymath}
\begin{split}
\wh{\bbb}_t&=\bigg(\sqrt{t}\DD^{S\otimes\e; \wt{\Delta}}+\big(\nabla^{\wt{\pi}_*\e, u}\oplus\nabla^{\wh{\hh}}\big)-\frac{c(\wt{T})}{4\sqrt{t}}\bigg)\oplus\wt{\nabla}^{\LLL\oplus\wh{\WWW}, \op}\\
&=\wt{\bbb}^{\e; \wt{\Delta}}_t\oplus\wt{\nabla}^{\LLL\oplus\wh{\WWW}, \op},
\end{split}
\end{displaymath}
where $\wt{\bbb}^{\e; \wt{\Delta}}_t$ is defined by (\ref{eq 3.7}). By (\ref{eq 3.23}) and (\ref{eq 4.20}),
\begin{equation}\label{eq 4.23}
\begin{aligned}[b]
\lim_{t\to 0}\ch(\wh{\bbb}_t)&=\lim_{t\to 0}\ch(\wt{\bbb}^{\e; \wt{\Delta}}_t)-\ch(\wt{\nabla}^{\LLL\oplus\wh{\WWW}})\\
&=\int_{\wt{X}/\wt{B}}\todd(\nabla^{T^V\wt{X}})\wedge\ch(\nabla^\e)-\ch(\nabla^\RRR).
\end{aligned}
\end{equation}
Denote by $\wh{\eta}(\bee, \wt{\bpi}, \bLLL\oplus\wh{\bWWW})$ the Bismut--Cheeger eta form associated to $\wh{\bbb}_t$. By (\ref{eq 4.22}) and (\ref{eq 4.23}),
\begin{equation}\label{eq 4.24}
d\wh{\eta}(\bee, \wt{\bpi}, \bLLL\oplus\wh{\bWWW})=\int_{\wt{X}/\wt{B}}\todd(\nabla^{T^V\wt{X}})\wedge\ch(\nabla^\e)-\ch(\nabla^\RRR).
\end{equation}
By (\ref{eq 2.1}), (\ref{eq 4.24}), (\ref{eq 4.19}) and (\ref{eq 2.14}),
\begin{displaymath}
\begin{split}
&i_{B, 1}^*\wh{\eta}(\bee, \wt{\bpi}, \bLLL\oplus\wh{\bWWW})-i_{B, 0}^*\wh{\eta}(\bee, \wt{\bpi}, \bLLL\oplus\wh{\bWWW})\\
&\equiv-\int_{\wt{B}/B}d^{\wt{B}}\wh{\eta}(\bee, \wt{\bpi}, \bLLL\oplus\wh{\bWWW})\\
&\equiv-\int_{\wt{B}/B}\bigg(\int_{\wt{X}/\wt{B}}\todd(\nabla^{T^V\wt{X}})\wedge\ch(\nabla^\e)-\ch(\nabla^\RRR)\bigg)\\
&\equiv-\int_{\wt{B}/B}\int_{\wt{X}/\wt{B}}\todd(\nabla^{T^V\wt{X}})\wedge\ch(\nabla^\e)-\CS\big(h_0^*(\nabla^L\oplus
\nabla^{\wh{V}_0}), h_0^*\wt{h}^*(\nabla^{\ker(\DD^{S\otimes E; \Delta})}\oplus\nabla^{\wh{V}_1})\big)\\
&\equiv-\int_{\wt{B}/B}\int_{\wt{X}/\wt{B}}\todd(\nabla^{T^V\wt{X}})\wedge\ch(\nabla^\e)-\CS\big(\nabla^L\oplus
\nabla^{\wh{V}_0}, \wt{h}^*(\nabla^{\ker(\DD^{S\otimes E; \Delta})}\oplus\nabla^{\wh{V}_1})\big).
\end{split}
\end{displaymath}
Since $\todd(\nabla^{T^V\wt{X}})=p_X^*\todd(\nabla^{T^VX})$, it follows that
\begin{equation}\label{eq 4.25}
\begin{aligned}[b]
\int_{\wt{B}/B}\int_{\wt{X}/\wt{B}}\todd(\nabla^{T^V\wt{X}})\wedge\ch(\nabla^\e)&\equiv\int_{X/B}\int_{\wt{X}/X}p_X^*
\todd(\nabla^{T^VX})\wedge\ch(\nabla^\e)\\
&\equiv\int_{X/B}\todd(\nabla^{T^VX})\wedge\int_{\wt{X}/X}\ch(\nabla^\e)\\
&\equiv-\int_{X/B}\todd(\nabla^{T^VX})\wedge\CS(\nabla^E, \nabla^E)\\
&\equiv 0.
\end{aligned}
\end{equation}
Thus
\begin{equation}\label{eq 4.26}
i_{B, 0}^*\wh{\eta}(\bee, \wt{\bpi}, \bLLL\oplus\wh{\bWWW})-i_{B, 1}^*\wh{\eta}(\bee, \wt{\bpi}, \bLLL\oplus\wh{\bWWW})\equiv\CS\big(\nabla^L\oplus\nabla^{\wh{V}_0}, \wt{h}^*(\nabla^{\ker(\DD^{S\otimes E; \Delta})}\oplus\nabla^{\wh{V}_1})\big).
\end{equation}
To prove (\ref{eq 4.7}), by (\ref{eq 4.26}) it suffices to show that
\begin{align}
i_{B, 0}^*\wh{\eta}(\bee, \wt{\bpi}, \bLLL\oplus\wh{\bWWW})&\equiv\wh{\eta}(\EE, \bpi, \LL),\label{eq 4.27}\\
i_{B, 1}^*\wh{\eta}(\bee, \wt{\bpi}, \bLLL\oplus\wh{\bWWW})&\equiv\wt{\eta}(\EE, \bpi, \HH, s).\label{eq 4.28}
\end{align}

Define a $\Z_2$-graded morphism
$$f:\wt{\pi}_*\bee\oplus\wh{\bhh}\oplus(\bLLL\oplus\wh{\bWWW})^{\op}\to\wt{\pi}_*\bee\oplus\wh{\bhh}\oplus\bRRR^{\op}$$
defined by $f:=\id_{\wt{\pi}_*\bee}\oplus\id_{\wh{\bhh}}\oplus\wt{\varphi}^{\op}$. By (\ref{eq 4.20}) and (\ref{eq 4.21}),
\begin{equation}\label{eq 4.29}
(f^{-1})^*\wh{\bbb}_t=\sqrt{t}\big(\DDD^{S\otimes\e; \wt{\Delta}}(\alpha(t))\oplus\alpha(t)s_{\wh{\WWW}}\big)+\big(\nabla^{\wt{\pi}_*\e, u}\oplus\nabla^{\wh{\hh}}\oplus\nabla^{\RRR, \op}\big)-\frac{c(\wt{T})}{4\sqrt{t}}.
\end{equation}
Let $j\in\set{0, 1}$. Define a map $s_{\wh{V}_j}:\wh{V}_j\to\wh{V}_j$ by (\ref{eq 2.2}).

To prove (\ref{eq 4.27}), note that by (\ref{eq 4.29}), (\ref{eq 4.10}) and (\ref{eq 4.19}),
$$i_{B, 0}^*(f^{-1})^*\wh{\bbb}_t=\sqrt{t}\big(\DDD^{S\otimes E}(\alpha(t))\oplus\alpha(t)s_{\wh{W}}\big)+\big(\nabla^{\pi_*E, u}\oplus\nabla^{\wh{H}}\oplus(h_0^*(\nabla^L\oplus\nabla^{\wh{V}_0}))^{\op}\big)-\frac{c(T)}{4\sqrt{t}}.$$
Define a $\Z_2$-graded morphism
$$\wt{h}_0:\bpi_*\EE\oplus\wh{\HH}\oplus(\LL\oplus\wh{\VV}_0)^{\op}\to\bpi_*\EE\oplus\wh{\HH}\oplus(\wt{\LL}_0
\oplus\wh{\WW})^{\op}$$
by $\wt{h}_0:=\id_{\bpi_*\EE}\oplus\id_{\wh{\HH}}\oplus(h_0^{-1})^{\op}$. By (\ref{eq 3.14}) and (\ref{eq 2.3}),
\begin{equation}\label{eq 4.30}
\begin{aligned}[b]
\wt{h}_0^*i_{B, 0}^*(f^{-1})^*\wh{\bbb}_t&=\sqrt{t}\big(\DDD^{S\otimes E}(\alpha(t))\oplus\alpha(t)s_{\wh{V}_0}\big)+\big(\nabla^{\pi_*E, u}\oplus\nabla^{\wh{H}}\oplus\nabla^{L, \op}\oplus\nabla^{\wh{V}_0}\big)-\frac{c(T)}{4\sqrt{t}}\\
&=\wh{\bbb}^E_t\oplus\nabla^{\wh{H}}\oplus\aaa^{\wh{V}_0}_t.
\end{aligned}
\end{equation}
Let $t<T\in(0, \infty)$. By (\ref{eq 2.14}), (\ref{eq 4.30}), (\ref{eq 2.11}) and (\ref{eq 2.12}),
\begin{equation}\label{eq 4.31}
\begin{aligned}[b]
i_{B, 0}^*\CS\big(\wh{\bbb}_t, \wh{\bbb}_T\big)&\equiv i_{B, 0}^*\CS\big((f^{-1})^*\wh{\bbb}_t, (f^{-1})^*\wh{\bbb}_T\big)\\
&\equiv\CS\big(i_{B, 0}^*(f^{-1})^*\wh{\bbb}_t, i_{B, 0}^*(f^{-1})^*\wh{\bbb}_T\big)\\
&\equiv\CS\big(\wt{h}_0^*i_{B, 0}^*(f^{-1})^*\wh{\bbb}_t, \wt{h}_0^*i_{B, 0}^*(f^{-1})^*\wh{\bbb}_T\big)\\
&\equiv\CS\big(\wh{\bbb}^E_t\oplus\nabla^{\wh{H}}\oplus\aaa^{\wh{V}_0}_t, \wh{\bbb}^E_T\oplus\nabla^{\wh{H}}\oplus\aaa^{\wh{V}_0}_T\big)\\
&\equiv\CS\big(\wh{\bbb}^E_t, \wh{\bbb}^E_T\big)+\CS\big(\nabla^{\wh{H}}, \nabla^{\wh{H}}\big)+\CS\big(\aaa^{\wh{V}_0}_t, \aaa^{\wh{V}_0}_T\big)\\
&\equiv\CS\big(\wh{\bbb}^E_t, \wh{\bbb}^E_T\big)+\CS\big(\aaa^{\wh{V}_0}_t, \aaa^{\wh{V}_0}_T\big).
\end{aligned}
\end{equation}
By letting $t\to 0$ and $T\to\infty$ in (\ref{eq 4.31}),
$$i_{B, 0}^*\wh{\eta}(\bee, \wt{\bpi}, \bLLL\oplus\wh{\bWWW})\equiv\wh{\eta}(\EE, \bpi, \LL)+\wh{\eta}^{\wh{V}_0}.$$
Since $\wh{\eta}^{\wh{V}_0}\equiv 0$ by (\ref{eq 2.4}), (\ref{eq 4.27}) holds.

To prove (\ref{eq 4.28}), note that by (\ref{eq 4.10}), (\ref{eq 4.19}) and (\ref{eq 4.29}),
\begin{displaymath}
\begin{split}
i_{B, 1}^*(f^{-1})^*\wh{\bbb}_t&=\sqrt{t}\big(\DDD^{S\otimes E; \Delta}(\alpha(t))\oplus\alpha(t)s_{\wh{W}}\big)\\
&\qquad+\big(\nabla^{\pi_*E, u}\oplus\nabla^{\wh{H}}\oplus(h_0^*\wt{h}^*(\nabla^{\ker(\DD^{S\otimes E; \Delta})}\oplus\nabla^{\wh{V}_1}))^{\op}\big)-\frac{c(T)}{4\sqrt{t}}.
\end{split}
\end{displaymath}
Define a $\Z_2$-graded morphism
$$\wh{h}:\bpi_*\EE\oplus\wh{\HH}\oplus\big(\KER\big(\DD^{S\otimes E; \Delta}\big)\oplus\wh{\VV}_1\big)^{\op}\to\bpi_*\EE\oplus\wh{\HH}\oplus(\wt{\LL}_0\oplus\wh{\WW})^{\op}$$
by $\wh{h}:=\id_{\bpi_*\EE}\oplus\id_{\wh{\HH}}\oplus(h_0^{-1}\circ\wt{h}^{-1})^{\op}$. By (\ref{eq 3.21}) and (\ref{eq 2.3}),
\begin{equation}\label{eq 4.32}
\begin{aligned}[b]
&\wh{h}^*i_{B, 1}^*(f^{-1})^*\wh{\bbb}_t\\
&=\sqrt{t}\big(\DDD^{S\otimes E; \Delta}(\alpha(t))\oplus\alpha(t)s_{\wh{V}_1}\big)+\big(\nabla^{\pi_*E, u}\oplus\nabla^{\wh{H}}\oplus\nabla^{\ker(\DD^{S\otimes E; \Delta}), \op}\oplus\nabla^{\wh{V}_1}\big)-\frac{c(T)}{4\sqrt{t}}\\
&=\wh{\bbb}^{E; \Delta}_t\oplus\aaa^{\wh{V}_1}_t,
\end{aligned}
\end{equation}
where $\wh{\bbb}^{E; \Delta}_t$ is given by (\ref{eq 3.21}). Let $t<T\in(0, \infty)$. By applying the argument in (\ref{eq 4.31}) to (\ref{eq 4.32}),
\begin{equation}\label{eq 4.33}
i_{B, 1}^*\CS\big(\wh{\bbb}_t, \wh{\bbb}_T\big)\equiv\CS\big(\wh{\bbb}^{E; \Delta}_t, \wh{\bbb}^{E; \Delta}_T\big)+\CS\big(\aaa^{\wh{V}_1}_t, \aaa^{\wh{V}_1}_T\big).
\end{equation}
Let $t\to 0$ and $T\to\infty$ in (\ref{eq 4.33}). Since $\wh{\eta}^{\wh{V}_1}\equiv 0$ by (\ref{eq 2.4}), it follows from (\ref{eq 3.24}) and Lemma \ref{lemma 4.1} that
$$i_{B, 1}^*\wh{\eta}(\bee, \wt{\bpi}, \bLLL\oplus\wh{\bWWW})\equiv\wh{\eta}\big(\EE, \bpi, \KER\big(\DD^{S\otimes E; \Delta}\big)\big)\equiv\wt{\eta}(\EE, \bpi, \HH, s).$$
Thus (\ref{eq 4.28}) holds.
\end{proof}

At the expense of using \cite[Proposition 3]{GL18}, the proof of Proposition \ref{prop 4.1} can be simplified by taking $(H, s)$ to be $(L^-, i_-)$. We would like to thank a referee for pointing this out.

\section{Some properties of geometric kernel bundles and the Bismut--Cheeger eta form in the ASGL approach}\label{s 5}

In this section, we prove an extended variational formula of the Bismut--Cheeger eta form in the ASGL approach.

The following lemma, which roughly says geometric kernel bundles and the Bismut--Cheeger eta form in the ASGL approach are stable under perturbation of elementary $\Z_2$-graded geometric bundles, is one of the key ingredients of the construction of the analytic index $\ind^a_k$.
\begin{lemma}\label{lemma 5.1}
Let $\pi:X\to B$ be a submersion with closed, oriented and spin$^c$ fibers of even dimension, equipped with a Riemannian and differential spin$^c$ structure $\bpi$. Let $\EE=(E, g^E, \nabla^E)$ be a geometric bundle over $X$. Denote by $\DD^{S\otimes E}$ the twisted spin$^c$ Dirac operator associated to $(\EE, \bpi)$. Let $(H, s)$ satisfy the AS property for $\DD^{S\otimes E}$ and $\HH$ a geometric bundle defining $\KER\big(\DD^{S\otimes E; \Delta}\big)$, where $\Delta$ is the odd self-adjoint map associated to $(H, s)$ as in (\ref{eq 3.3}). For any geometric bundle $\KK$ over $B$, the pair $(H\oplus K, s\oplus 0)$ satisfies the AS property for $\DD^{S\otimes E}$ and $\HH\oplus\KK$ is the geometric bundle defining $\KER\big(\DD^{S\otimes E; \Phi}\big)$, where $\Phi$ is the odd self-adjoint map associated to $(H\oplus K, s\oplus 0)$ as in (\ref{eq 3.3}). Moreover,
\begin{equation}\label{eq 5.1}
\KER\big(\DD^{S\otimes E; \Phi}\big)=\KER\big(\DD^{S\otimes E; \Delta}\big)\oplus\wh{\KK},
\end{equation}
and
\begin{equation}\label{eq 5.2}
\wt{\eta}(\EE, \bpi, \HH\oplus\KK, s\oplus 0)=\wt{\eta}(\EE, \bpi, \HH, s).
\end{equation}
\end{lemma}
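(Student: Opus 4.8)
The plan is to unwind the definitions in the ASGL construction and observe that adding a summand $\KK$ to the auxiliary bundle $\HH$ contributes nothing but an inert copy of $\nabla^{\wh{\KK}}$ everywhere it appears. First I would check the AS property: since $\DD^{S\otimes E}_+ + s : (\pi_*E)^+ \oplus H \to (\pi_*E)^-$ is surjective, precomposing with the further inclusion of $K$ (with the zero map on that factor) leaves the image unchanged, so $\DD^{S\otimes E}_+ + (s\oplus 0) : (\pi_*E)^+ \oplus H \oplus K \to (\pi_*E)^-$ is still surjective; hence $(H\oplus K, s\oplus 0)$ satisfies the AS property, and the associated odd self-adjoint map $\Phi$ from \eqref{eq 3.3} is simply $\Delta \oplus 0_K$ (the block for the $K$-summand vanishes since $s\oplus 0$ is zero there).

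Next I would establish \eqref{eq 5.1}. From \eqref{eq 3.4} and \eqref{eq 3.5}, $\ker(\DD^{S\otimes E;\Phi}_+) = \ker(\DD^{S\otimes E}_+ + (s\oplus 0)) = \ker(\DD^{S\otimes E}_+ + s)$ because the $K$-factor is mapped to zero; meanwhile $\ker(\DD^{S\otimes E;\Phi}_-) \cong \wh{(H\oplus K)}^- = \wh{H}^- \oplus \wh{K}^-$ since the cokernel picks up the whole extra factor $K$. So $\ker(\DD^{S\otimes E;\Phi}) = \ker(\DD^{S\otimes E;\Delta}) \oplus (K\oplus K)$ with the evident $\Z_2$-grading making the extra piece $\wh{\KK}$. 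For the geometric structure, the inherited metric on $\wh{K}\to B$ is $g^K\oplus g^K$ and, because the orthogonal decomposition \eqref{eq 3.6} splits off $\wh{K}$ as a direct summand on which the connection $\nabla^{\pi_*E,u}\oplus\nabla^{\wh H}\oplus\nabla^{\wh K}$ already acts as $\nabla^{\wh K}$, the projected connection $\nabla^{\ker(\DD^{S\otimes E;\Phi})}$ restricts to $\nabla^{\wh K}$ on that summand and to $\nabla^{\ker(\DD^{S\otimes E;\Delta})}$ on the rest. This is exactly \eqref{eq 5.1} at the level of geometric bundles, and it identifies $\HH\oplus\KK$ as the geometric bundle defining $\KER(\DD^{S\otimes E;\Phi})$.

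Finally, for \eqref{eq 5.2}, I would compare the rescaled Bismut superconnections \eqref{eq 3.7}. Writing $\nabla^{\wh{K}}$ for the chosen unitary connection on $\wh K\to B$, the superconnection $\wt{\bbb}^{E;\Phi}_t$ on $\pi_*E \oplus \wh H \oplus \wh K$ decouples as $\wt{\bbb}^{E;\Delta}_t \oplus \nabla^{\wh{K}}$, since $\Phi = \Delta\oplus 0$ contributes no Clifford or odd-degree term on the $\wh K$-factor and the $c(T)/4\sqrt{t}$ term is by convention zero there. Consequently $\frac{d}{dt}\wt{\bbb}^{E;\Phi}_t = \frac{d}{dt}\wt{\bbb}^{E;\Delta}_t \oplus 0$ and $e^{-\frac{1}{2\pi i}(\wt{\bbb}^{E;\Phi}_t)^2} = e^{-\frac{1}{2\pi i}(\wt{\bbb}^{E;\Delta}_t)^2}\otimes e^{-\frac{1}{2\pi i}(\nabla^{\wh K})^2}$, and the supertrace factors so that the $\wh K$-contribution to the integrand vanishes identically (the supertrace over the purely even, ungraded-in-$t$ factor $\wh K = K\oplus K$ with odd grading swap is zero, just as in \eqref{eq 2.4}). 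Hence the integrand defining $\wt{\eta}(\EE,\bpi,\HH\oplus\KK, s\oplus 0)$ equals that defining $\wt{\eta}(\EE,\bpi,\HH,s)$ term by term, giving \eqref{eq 5.2} as an equality of forms (not merely mod $\im d$). The only mildly delicate point, and the one I would write out carefully, is the bookkeeping showing that the extra $\wh K$-summand in \eqref{eq 3.6} really is orthogonal and connection-compatible so that it splits off cleanly — but this follows directly from the block-diagonal form of $\Phi$ and of the ambient metric and connection, so I expect no genuine obstacle.
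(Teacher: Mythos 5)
Your argument follows the same lines as the paper's: identify $\Phi=\Delta\oplus 0$, decompose the kernel blockwise, and observe that the superconnection decouples so the $\wh{\KK}$-summand contributes nothing to the eta form. Two small slips are worth correcting. First, the chain
$\ker\big(\DD^{S\otimes E;\Phi}_+\big)=\ker\big(\DD^{S\otimes E}_++(s\oplus 0)\big)=\ker\big(\DD^{S\otimes E}_++s\big)$
is wrong as written: since the $K$-summand is sent to zero by $s\oplus 0$, it lies \emph{inside} the kernel, so the correct identity is $\ker\big(\DD^{S\otimes E}_++(s\oplus 0)\big)=\ker\big(\DD^{S\otimes E}_++s\big)\oplus K$ (this is exactly the paper's (\ref{eq 5.4})). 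Your phrase ``because the $K$-factor is mapped to zero'' supports the inclusion of an extra $K$, not its omission; as you wrote it, the even and odd parts you derive sum to $\ker(\DD^{S\otimes E;\Delta})\oplus K$, not $\ker(\DD^{S\otimes E;\Delta})\oplus\wh{K}$. Your stated conclusion is nonetheless correct, and the fix is one line. Second, the heat operator for a direct-sum superconnection splits as a direct sum, not a tensor product, i.e. $e^{-\frac{1}{2\pi i}(\wt{\bbb}^{E;\Phi}_t)^2}=e^{-\frac{1}{2\pi i}(\wt{\bbb}^{E;\Delta}_t)^2}\oplus e^{-\frac{1}{2\pi i}(\nabla^{\wh{K}})^2}$, and the $\wh{K}$-block of the integrand vanishes simply because $\frac{d}{dt}\nabla^{\wh{K}}=0$ — the appeal to (\ref{eq 2.4}) is unnecessary (and not quite on point, since there is no $s_{\wh{K}}$ term here, unlike in a split quadruple). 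With those corrections your integrand-level computation of (\ref{eq 5.2}) is fine and is in fact marginally sharper than the paper's route through Chern--Simons forms, since it yields pointwise equality directly rather than equality modulo exact forms.
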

\begin{proof}
Since $\DD^{S\otimes E}_++s:(\pi_*E)^+\oplus H\to(\pi_*E)^-$ is surjective, it follows that
$$\DD^{S\otimes E}_++(s\oplus 0):(\pi_*E)^+\oplus(H\oplus K)\to(\pi_*E)^-\textrm{ is surjective.}$$
Thus $(H\oplus K, s\oplus 0)$ satisfies the AS property for $\DD^{S\otimes E}$. As shown in \S\ref{s 3.2}, the vector bundle $\ker\big(\DD^{S\otimes E; \Phi}\big)\to B$ exists. Since the map $\DD^{S\otimes E; \Phi}_+:(\pi_*E\oplus\wh{H}\oplus\wh{K})^+\to(\pi_*E\oplus\wh{H}\oplus\wh{K})^-$ is given by
\begin{equation}\label{eq 5.3}
\DD^{S\otimes E; \Phi}_+=\begin{pmatrix} \DD^{S\otimes E}_+ & s & 0 \\ 0 & 0 & 0 \\ 0 & 0 & 0 \end{pmatrix}=\DD^{S\otimes E; \Delta}_+\oplus 0,
\end{equation}
it follows that
\begin{equation}\label{eq 5.4}
\ker\big(\DD^{S\otimes E; \Phi}_+\big)=\ker(\DD^{S\otimes E}_++s)\oplus K=\ker\big(\DD^{S\otimes E; \Delta}_+\big)\oplus K.
\end{equation}
On the other hand, by (\ref{eq 3.5}),
\begin{equation}\label{eq 5.5}
\begin{aligned}[b]
\ker\big(\DD^{S\otimes E; \Phi}_-\big)&\cong\cok\big(\DD^{S\otimes E; \Phi}_+\big)\\
&=\frac{(\pi_*E)^-\oplus H\oplus K}{\im\big(\DD^{S\otimes E; \Phi}_+\big)}\\
&\cong H\oplus K\\
&\cong\ker\big(\DD^{S\otimes E; \Delta}_-\big)\oplus K.
\end{aligned}
\end{equation}
By regarding $\ker\big(\DD^{S\otimes E; \Delta}\big)\to B$ and $\wh{K}\to B$ as $\Z_2$-graded subbundles of $\ker\big(\DD^{S\otimes E; \Phi}\big)\to B$, it follows from (\ref{eq 5.4}) and (\ref{eq 5.5}) that
\begin{equation}\label{eq 5.6}
\ker\big(\DD^{S\otimes E; \Phi}\big)=\ker\big(\DD^{S\otimes E; \Delta}\big)\oplus\wh{K}.
\end{equation}
By (\ref{eq 5.6}),
$$g^{\ker(\DD^{S\otimes E; \Phi})}=g^{\ker(\DD^{S\otimes E; \Delta})}\oplus g^{\wh{K}}\quad\textrm{ and }\quad\nabla^{\ker(\DD^{S\otimes E; \Phi})}=\nabla^{\ker(\DD^{S\otimes E; \Delta})}\oplus\nabla^{\wh{K}}.$$
Thus (\ref{eq 5.1}) holds.

By (\ref{eq 5.3}),
\begin{equation}\label{eq 5.7}
\begin{aligned}[b]
\wt{\bbb}^{E; \Phi}_t&=\sqrt{t}\DD^{S\otimes E; \Phi}+\big(\nabla^{\pi_*E, u}\oplus\nabla^{\wh{H}}\oplus\nabla^{\wh{K}}\big)-\frac{c(T)}{4\sqrt{t}}\\
&=\bigg(\sqrt{t }\DD^{S\otimes E; \Delta}+\big(\nabla^{\pi_*E, u}\oplus\nabla^{\wh{H}}\big)-\frac{c(T)}{4\sqrt{t}}\bigg)\oplus\nabla^{\wh{K}}\\
&=\wt{\bbb}^{E; \Delta}_t\oplus\nabla^{\wh{K}}.
\end{aligned}
\end{equation}
Let $t<T\in(0, \infty)$. By (\ref{eq 5.7}), (\ref{eq 2.11}) and (\ref{eq 2.12}),
\begin{equation}\label{eq 5.8}
\begin{aligned}[b]
\CS\big(\wt{\bbb}^{E; \Phi}_t, \wt{\bbb}^{E; \Phi}_T)&\equiv\CS\big(\wt{\bbb}^{E; \Delta}_t\oplus\nabla^{\wh{K}}, \wt{\bbb}^{E; \Delta}_T\oplus\nabla^{\wh{K}}\big)\\
&\equiv\CS\big(\wt{\bbb}^{E; \Delta}_t, \wt{\bbb}^{E; \Delta}_T\big)+\CS(\nabla^{\wh{K}}, \nabla^{\wh{K}})\\
&\equiv\CS\big(\wt{\bbb}^{E; \Delta}_t, \wt{\bbb}^{E; \Delta}_T\big).
\end{aligned}
\end{equation}
By letting $t\to 0$ and $T\to\infty$ in (\ref{eq 5.8}), (\ref{eq 5.2}) holds.
\end{proof}

The following proposition is essentially a special case of \cite[Proposition 3]{GL18}. Since we need a more precise statement for the construction of the analytic index $\ind^a_k$, we include a slightly different proof for the sake of completeness.
\begin{prop}\label{prop 5.1}
Let $\pi:X\to B$ be a submersion with closed, oriented and spin$^c$ fibers of even dimension, equipped with a Riemannian and differential spin$^c$ structure $\bpi$. Let $\EE=(E, g^E, \nabla^E)$ be a geometric bundle over $X$. Denote by $\DD^{S\otimes E}$ the twisted spin$^c$ Dirac operator associated to $(\EE, \bpi)$. Let $j\in\set{0, 1}$. Let $(H_j, s_j)$ satisfy the AS property for $\DD^{S\otimes E}$ and $\HH_j$ a geometric bundle defining $\KER\big(\DD^{S\otimes E; \Delta_j}\big)$, where $\Delta_j$ is the odd self-adjoint map associated to $(H_j, s_j)$ as in (\ref{eq 3.3}). Then there exists a $\Z_2$-graded morphism
$$P_{01}:\KER\big(\DD^{S\otimes E; \Delta_0}\big)\oplus\wh{\HH}_1\to\KER\big(\DD^{S\otimes E; \Delta_1}\big)\oplus\wh{\HH}_0$$
such that
\begin{equation}\label{eq 5.9}
\wt{\eta}(\EE, \bpi, \HH_1, s_1)-\wt{\eta}(\EE, \bpi, \HH_0, s_0)\equiv-\CS\big(\nabla^{\ker(\DD^{S\otimes E; \Delta_0})}\oplus\nabla^{\wh{H}_1}, P_{01}^*(\nabla^{\ker(\DD^{S\otimes E; \Delta_1})}\oplus\nabla^{\wh{H}_0})\big).
\end{equation}
\end{prop}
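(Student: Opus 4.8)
The plan is to interpolate between the two perturbation data $(H_0,s_0)$ and $(H_1,s_1)$ over the cylinder $\wt{X}=X\times I$, exactly as in the proof of Proposition \ref{prop 4.1}, and then read off the comparison of Bismut--Cheeger eta forms from Stokes' theorem for integration along the fibers. First I would pull back $\bpi$ and $\EE$ to $\wt{\bpi}$ and $\bee=p_X^*\EE$ on $\wt{\pi}:\wt{X}\to\wt{B}$, and form the geometric bundle $\bhh=p_B^*(\HH_0\oplus\HH_1)$ over $\wt{B}$. Using the cut-off function $\chi$ with $\chi\equiv 0$ near $0$ and $\chi\equiv 1$ near $1$, I would build an odd self-adjoint perturbation $\wt{\Delta}$ of $\DD^{S\otimes\bee}$ acting on $\wt{\pi}_*\bee\oplus\wh{\bhh}=\wt{\pi}_*\bee\oplus\wh{\bHH}_0\oplus\wh{\bHH}_1$ which restricts to $\Delta_0\oplus 0$ (a perturbation by $s_0$ only, on the $\HH_0$ factor) at $t=0$ and to $0\oplus\Delta_1$ (a perturbation by $s_1$ only, on the $\HH_1$ factor) at $t=1$; by Lemma \ref{lemma 5.1} these restrictions have geometric kernel bundles $\KER(\DD^{S\otimes E;\Delta_0})\oplus\wh{\HH}_1$ and $\KER(\DD^{S\otimes E;\Delta_1})\oplus\wh{\HH}_0$ respectively, with the corresponding eta forms unchanged, i.e. $\wt{\eta}(\EE,\bpi,\HH_j\oplus\HH_{1-j},s_j\oplus 0)=\wt{\eta}(\EE,\bpi,\HH_j,s_j)$.

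Next I would note that $\DD^{S\otimes\bee;\wt{\Delta}}_++\chi$-independent terms need not itself have a kernel bundle, so I would pass through the MFFL machinery as in Proposition \ref{prop 4.1}: apply \cite[Lemma 2.3]{MF79} to obtain a $\Z_2$-graded geometric bundle $\bLLL$ over $\wt{B}$ whose underlying bundle satisfies the MF property for $\DD^{S\otimes\bee;\wt{\Delta}}$, together with the corresponding decoupling Bismut superconnection $\wh{\bbb}_t$ (with the auxiliary $s_{\wh{\WWW}}$ term to fix parities/ranks) and its eta form $\wh{\eta}(\bee,\wt{\bpi},\bLLL\oplus\wh{\bWWW})$. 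Restricting to $t=0$ and $t=1$, the superconnection decouples and, after conjugating by suitable $\Z_2$-graded morphisms (supplied by \cite[(2) of Remark 2.1]{H23} from the MF-property matchings of Lemma \ref{lemma 5.1} and \cite[Remark 4.6]{H23}), becomes $\wh{\bbb}^{E;\Delta_j}_t$ plus vanishing-eta-form summands $\aaa^{\wh{V}}_t$. By Lemma \ref{lemma 4.1} and \eqref{eq 2.4}, this yields $i_{B,j}^*\wh{\eta}(\bee,\wt{\bpi},\bLLL\oplus\wh{\bWWW})\equiv\wt{\eta}(\EE,\bpi,\HH_j,s_j)$ for $j=0,1$, up to the trivial modifications absorbed by Lemma \ref{lemma 5.1}.

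Then I would apply Stokes' theorem for integration along the fibers \eqref{eq 2.1} to $\wh{\eta}(\bee,\wt{\bpi},\bLLL\oplus\wh{\bWWW})$ over $\wt{B}\to B$: the interior term $\int_{\wt{B}/B}d^{\wt B}\wh{\eta}$ equals, by the local FIT for $\wh{\bbb}_t$, $\int_{\wt{B}/B}\int_{\wt{X}/\wt{B}}\todd(\nabla^{T^V\wt X})\wedge\ch(\nabla^\e)$ minus a Chern--Simons term for the kernel-bundle connection. The first piece vanishes as in \eqref{eq 4.25} because $\bee=p_X^*\EE$ forces $\int_{\wt{X}/X}\ch(\nabla^\e)\equiv-\CS(\nabla^E,\nabla^E)\equiv 0$; the Chern--Simons piece is precisely $\CS$ of the two connections $i_{B,0}^*$ and $i_{B,1}^*$ of the kernel-bundle connection on $\bLLL\oplus\wh{\bWWW}$, which by \eqref{eq 2.14} equals $\CS(\nabla^{\ker(\DD^{S\otimes E;\Delta_0})}\oplus\nabla^{\wh H_1},P_{01}^*(\nabla^{\ker(\DD^{S\otimes E;\Delta_1})}\oplus\nabla^{\wh H_0}))$, where $P_{01}$ is the composite of the comparison morphisms built above (the one coming directly from parallel transport along the cylinder, cf.\ \eqref{eq 2.5}, precomposed and postcomposed with the MF-matching isomorphisms). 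Combining these identifications gives \eqref{eq 5.9}.

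The main obstacle I expect is the bookkeeping in the second step: making sure that the auxiliary geometric bundles $\VV,\WW$ (and the $s_{\wh{\WWW}}$, $s_{\wh{V}_j}$ terms) introduced to force the MF property and to match ranks/parities all have vanishing eta form and can be canceled, so that the morphism $P_{01}$ one extracts really lands in $\KER(\DD^{S\otimes E;\Delta_0})\oplus\wh{\HH}_1\to\KER(\DD^{S\otimes E;\Delta_1})\oplus\wh{\HH}_0$ and not in some stabilization thereof. This is exactly where Lemma \ref{lemma 5.1} is essential --- it lets one absorb the elementary $\Z_2$-graded summands --- and where one must be careful to track that the morphism respects the $\Z_2$-grading. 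The analytic input (long-time behavior of the superconnection heat kernels, giving the limits $t\to 0,\infty$ and the vanishing of the boundary Chern--Simons terms at the two ends) is standard, via the estimates in \cite[\S9.3]{BGV} already invoked for Lemma \ref{lemma 4.1}.
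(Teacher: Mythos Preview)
Your overall architecture---interpolate over the cylinder, apply Stokes' theorem to the eta form, and read off a Chern--Simons comparison---matches the paper. But you miss the one step that makes this proposition clean, and your acknowledged ``main obstacle'' is precisely the symptom.

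The paper does \emph{not} pass through the MFFL machinery. Instead it observes that for the convex combination
\[
S_t=(1-t)(s_0\oplus 0)+t(0\oplus s_1):H_0\oplus H_1\to(\pi_*E)^-,
\]
the map $\DD^{S\otimes E}_++S_t$ is surjective for \emph{every} $t\in I$: given $\psi\in(\pi_*E)^-$, write $\psi=\DD^{S\otimes E}_+\psi_j+s_jh_j$ for $j=0,1$ and take the convex combination $(1-t)\psi+t\psi$. Hence $(\hh,\wt{s})$ with $\wt{s}(h,t)=(S_t(h),t)$ satisfies the AS property for $\DD^{S\otimes\e}$ over all of $\wt{B}$, and the ASGL kernel bundle $\KER(\DD^{S\otimes\e;\wt{\Phi}})$ exists directly. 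The morphism $P_{01}$ is then literally parallel transport \eqref{eq 2.5} along the cylinder between the restrictions, which by Lemma \ref{lemma 5.1} are exactly $\KER(\DD^{S\otimes E;\Delta_0})\oplus\wh{\HH}_1$ and $\KER(\DD^{S\otimes E;\Delta_1})\oplus\wh{\HH}_0$. No auxiliary $\VV$, $\WW$, or MF-matching isomorphisms appear; the bookkeeping you fear simply does not arise.

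Your detour through \cite[Lemma 2.3]{MF79} would at best yield a morphism between \emph{stabilizations} $\KER(\DD^{S\otimes E;\Delta_0})\oplus\wh{\HH}_1\oplus\wh{\VV}_0\to\KER(\DD^{S\otimes E;\Delta_1})\oplus\wh{\HH}_0\oplus\wh{\VV}_1$, which is the shape of Proposition \ref{prop 5.2}, not Proposition \ref{prop 5.1}. Lemma \ref{lemma 5.1} only lets you \emph{add} elementary summands, not strip them off, so it does not obviously rescue you. The precise form of Proposition \ref{prop 5.1}---with only $\wh{\HH}_0,\wh{\HH}_1$ as extra summands---is what makes Remark \ref{remark 5.1}(2) work (those bundles can be taken trivial), and that is essential for the construction of $\ind^a_k$. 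So the surjectivity-of-$S_t$ argument is not merely a simplification; it is the content that distinguishes this proposition from the weaker variational formulas.
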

\begin{proof}
By Lemma \ref{lemma 5.1}, $(H_0\oplus H_1, s_0\oplus 0)$ and $(H_0\oplus H_1, 0\oplus s_1)$ satisfy the AS property for $\DD^{S\otimes E}$, and
\begin{equation}\label{eq 5.10}
\begin{split}
\KER\big(\DD^{S\otimes E; \Phi_0}\big)&=\KER\big(\DD^{S\otimes E; \Delta_0}\big)\oplus\wh{\HH}_1,\\
\KER\big(\DD^{S\otimes E; \Phi_1}\big)&=\KER\big(\DD^{S\otimes E; \Delta_1}\big)\oplus\wh{\HH}_0,
\end{split}
\end{equation}
where $\Phi_0$ and $\Phi_1$ are the odd self-adjoint maps associated to $(H_0\oplus H_1, s_0\oplus 0)$ and $(H_0\oplus H_1, 0\oplus s_1)$ as in (\ref{eq 3.3}), respectively.

For each $t\in I$, define a map $S_t:H_0\oplus H_1\to(\pi_*E)^-$ by
$$S_t=(1-t)(s_0\oplus 0)+t(0\oplus s_1).$$
We claim that the map $\DD^{S\otimes E}_++S_t:(\pi_*E)^+\oplus(H_0\oplus H_1)\to(\pi_*E)^-$ is surjective for each $t\in I$. Let $\psi\in(\pi_*E)^-$. By assumption, there exist $\psi_0\oplus h_0\in(\pi_*E)^+\oplus H_0$ and $\psi_1\oplus h_1\in(\pi_*E)^-\oplus H_1$ such that
\begin{displaymath}
\begin{split}
\psi&=(\DD^{S\otimes E}_++s_0)(\psi_0\oplus h_0)=\DD^{S\otimes E}_+(\psi_0)+s_0(h_0),\\
\psi&=(\DD^{S\otimes E}_++s_1)(\psi_1\oplus h_1)=\DD^{S\otimes E}_+(\psi_1)+s_1(h_1),
\end{split}
\end{displaymath}
respectively. Since
\begin{displaymath}
\begin{split}
\psi&=(1-t)\psi+t\psi\\
&=(1-t)(\DD^{S\otimes E}_+(\psi_0)+s_0(h_0))+t(\DD^{S\otimes E}_+(\psi_1)+s_1(h_1))\\
&=\DD^{S\otimes E}_+((1-t)\psi_0+t\psi_1)+(1-t)s_0(h_0)+ts_1(h_1)\\
&=(\DD^{S\otimes E}_++S_t)(((1-t)\psi_0+t\psi_1)\oplus(h_0\oplus h_1)),
\end{split}
\end{displaymath}
the map $\DD^{S\otimes E}_++S_t$ is surjective for each $t\in I$. Thus $(H_0\oplus H_1, S_t)$ satisfies the AS property for $\DD^{S\otimes E}$.

Let $\wt{\bpi}$ be the Riemannian and differential spin$^c$ structure on $\wt{\pi}:\wt{X}\to\wt{B}$ obtained by pulling back $\bpi$ via $p_X$. Define a geometric bundle $\bee$ over $\wt{X}$ by $\bee=p_X^*\EE$. Denote by $\DD^{S\otimes\e}$ the twisted spin$^c$ Dirac operator associated to $(\bee, \wt{\bpi})$.

Write $\HH$ for $\HH_0\oplus\HH_1$. Define a geometric bundle $\bhh$ over $\wt{B}$ by $\bhh=p_B^*\HH$ and a linear map $\wt{s}:\hh\to(\wt{\pi}_*\e)^-$ by
$$\wt{s}(h, t)=(s_t(h), t).$$
Note that $\wt{s}(h, 0)=\big((s_0\oplus 0)(h), 0\big)$ and $\wt{s}(h, 1)=\big((0\oplus s_1)(h), 1\big)$. Since $\DD^{S\otimes E}_++S_t:(\pi_*E)^+\oplus H\to(\pi_*E)^-$ is surjective, it follows that
$$\DD^{S\otimes\e}_++\wt{s}:(\wt{\pi}_*\e)^+\oplus\hh\to(\wt{\pi}_*\e)^-\textrm{ is surjective}.$$
Thus $(\hh, \wt{s})$ satisfies the AS property for $\DD^{S\otimes\e}$. Denote by $\wt{\Phi}$ the odd self-adjoint map associated to $(\hh, \wt{s})$ as in (\ref{eq 3.3}). As shown in \S\ref{s 3.2}, the vector bundle $\ker\big(\DD^{S\otimes\e; \wt{\Phi}}\big)\to\wt{B}$ exists. Note that $\bhh$ is the geometric bundle defining $\KER\big(\DD^{S\otimes\e; \wt{\Phi}}\big)$.

Since
$$i_{B, 0}^*(\wt{\pi}_*\e\oplus\wh{\hh})\cong i_{B, 1}^*(\wt{\pi}_*\e\oplus\wh{\hh})\cong\pi_*E\oplus\wh{H}$$
and
$$\wt{s}|_{i_{B, 0}^*\hh}=s_0\oplus 0\quad\textrm{ and }\quad\wt{s}|_{i_{B, 1}^*\hh}=0\oplus s_1,$$
it follows that
\begin{equation}\label{eq 5.11}
\DD^{S\otimes\e; \wt{\Phi}}|_{i_{B, j}^*(\wt{\pi}_*\e\oplus\wh{\hh})}=\DD^{S\otimes E; \Phi_j}.
\end{equation}
By (\ref{eq 5.11}),
$$i_{B, j}^*\ker\big(\DD^{S\otimes\e; \wt{\Phi}}\big)=\ker\big(\DD^{S\otimes E; \Phi_j}\big)\quad\textrm{ and }\quad i_{B, j}^*g^{\ker(\DD^{S\otimes\e; \wt{\Phi}})}=g^{\ker(\DD^{S\otimes E; \Phi_j})}$$
for each $j\in\set{0, 1}$. Let $P_{01}:\KER\big(\DD^{S\otimes E; \Phi_0}\big)\to\KER\big(\DD^{S\otimes E; \Phi_1}\big)$ be the $\Z_2$-graded morphism (\ref{eq 2.5}). Note that
\begin{equation}\label{eq 5.12}
\nabla^{\ker(\DD^{S\otimes E; \Phi_0})}\quad\textrm{ and }\quad P_{01}^*\nabla^{\ker(\DD^{S\otimes E; \Phi_1})}
\end{equation}
are $\Z_2$-graded unitary connections on $\ker\big(\DD^{S\otimes E; \Phi_0}\big)\to B$.

Define a $\Z_2$-graded geometric bundle $\bkk=(\kk, g^\kk, \nabla^\kk)$ over $\wt{B}$ by
$$\kk:=p_B^*\ker\big(\DD^{S\otimes E; \Phi_0}\big),\qquad g^\kk=p_B^*g^{\ker(\DD^{S\otimes E; \Phi_0})}$$
and $\nabla^\kk$ by (\ref{eq 2.6}) and (\ref{eq 2.7}) with respect to (\ref{eq 5.12}). Then
\begin{equation}\label{eq 5.13}
i_{B, 0}^*\nabla^\kk=\nabla^{\ker(\DD^{S\otimes E; \Phi_0})}\quad\textrm{ and }\quad i_{B, 1}^*\nabla^\kk=P_{01}^*\nabla^{\ker(\DD^{S\otimes E; \Phi_1})}.
\end{equation}
Since $i_{B, 0}\circ p_B:\wt{B}\to\wt{B}$ is smoothly homotopic to $\id_{\wt{B}}$, it follows that
$$\kk\cong\ker\big(\DD^{S\otimes\e; \wt{\Phi}}\big)$$
as $\Z_2$-graded complex vector bundles. By \cite[(2) of Remark 2.1]{H23}, let
$$\varphi:\KER\big(\DD^{S\otimes\e; \wt{\Phi}}\big)\to\bkk$$
be a $\Z_2$-graded morphism. Then $\wt{\nabla}^{\ker(\DD^{S\otimes\e; \wt{\Phi}})}:=\varphi^*\nabla^\kk$ is a $\Z_2$-graded unitary connection on $\ker\big(\DD^{S\otimes\e; \wt{\Phi}}\big)\to\wt{B}$.

Define a rescaled Bismut superconnection on 
$$\wt{\pi}_*\e\oplus\wh{\hh}\oplus\ker\big(\DD^{S\otimes\e; \wt{\Phi}}\big)^{\op}\to\wt{B}$$ 
by
\begin{equation}\label{eq 5.14}
\wh{\bbb}_t=\sqrt{t}\DDD^{S\otimes\e; \wt{\Phi}}(\alpha(t))+\big(\nabla^{\wt{\pi}_*\e, u}\oplus\nabla^\wh{\hh}\oplus\wt{\nabla}^{\ker(\DD^{S\otimes\e; \wt{\Phi}}), \op}\big)-\frac{c(\wt{T})}{4\sqrt{t}},
\end{equation}
where $\DDD^{S\otimes\e; \wt{\Phi}}(\alpha(t))$ is given by (\ref{eq 3.20}). By (\ref{eq 3.22}),
\begin{equation}\label{eq 5.15}
\lim_{t\to\infty}\ch(\wh{\bbb}_t)=0.
\end{equation}
By (\ref{eq 3.23}),
\begin{equation}\label{eq 5.16}
\begin{aligned}[b]
\lim_{t\to 0}\ch(\wh{\bbb}_t)&=\int_{\wt{X}/\wt{B}}\todd(\nabla^{T^V\wt{X}})\wedge\ch(\nabla^\e)-\ch(\wt{\nabla}^{\ker(\DD^{S\otimes\e; \wt{\Phi}})})\\
&=\int_{\wt{X}/\wt{B}}\todd(\nabla^{T^V\wt{X}})\wedge\ch(\nabla^\e)-\ch(\nabla^\kk).
\end{aligned}
\end{equation}
Denote by $\wh{\eta}\big(\bee, \wt{\bpi}, \KER\big(\DD^{S\otimes\e; \wt{\Phi}}\big)\big)$ the Bismut--Cheeger eta form associated to $\wh{\bbb}_t$ (\ref{eq 5.14}). By (\ref{eq 5.15}) and (\ref{eq 5.16}),
\begin{equation}\label{eq 5.17}
d\wh{\eta}\big(\bee, \wt{\bpi}, \KER\big(\DD^{S\otimes\e; \wt{\Phi}}\big)\big)=\int_{\wt{X}/\wt{B}}\todd(\nabla^{T^V\wt{X}})\wedge\ch(\nabla^\e)-\ch(\nabla^\kk).
\end{equation}
By (\ref{eq 2.1}), (\ref{eq 5.17}), (\ref{eq 5.13}), (\ref{eq 4.25}) and (\ref{eq 5.10}),
\begin{equation}\label{eq 5.18}
\begin{aligned}[b]
&i_{B, 1}^*\wh{\eta}\big(\bee, \wt{\bpi}, \KER\big(\DD^{S\otimes\e; \wt{\Phi}}\big)\big)-i_{B, 0}^*\wh{\eta}\big(\bee, \wt{\bpi}, \KER(\DD^{S\otimes\e; \wt{\Phi}}\big)\big)\\
&\equiv-\int_{\wt{B}/B}d^{\wt{B}}\wh{\eta}\big(\bee, \wt{\bpi}, \KER\big(\DD^{S\otimes\e; \wt{\Phi}}\big)\big)\\
&\equiv-\int_{\wt{B}/B}\bigg(\int_{\wt{X}/\wt{B}}\todd(\nabla^{T^V\wt{X}})\wedge\ch(\nabla^\e)-\ch(\nabla^\kk)\bigg)\\
&\equiv-\int_{\wt{B}/B}\int_{\wt{X}/\wt{B}}\todd(\nabla^{T^V\wt{X}})\wedge\ch(\nabla^\e)-\CS\big(\nabla^{\ker(\DD^{S\otimes E; \Phi_0})}, P_{01}^*\nabla^{\ker(\DD^{S\otimes E; \Phi_1})}\big)\\
&\equiv-\CS\big(\nabla^{\ker(\DD^{S\otimes E; \Phi_0})}, P_{01}^*\nabla^{\ker(\DD^{S\otimes E; \Phi_1})}\big)\\
&\equiv-\CS\big(\nabla^{\ker(\DD^{S\otimes E; \Delta_0})}\oplus\nabla^{\wh{H}_1}, P_{01}^*(\nabla^{\ker(\DD^{S\otimes E; \Delta_1})}\oplus\nabla^{\wh{H}_0})\big).
\end{aligned}
\end{equation}
To prove (\ref{eq 5.9}), by (\ref{eq 5.18}) it suffices to prove that
\begin{equation}\label{eq 5.19}
i_{B, j}^*\wh{\eta}\big(\bee, \wt{\bpi}, \KER\big(\DD^{S\otimes\e; \wt{\Phi}}\big)\big)\equiv\wt{\eta}(\EE, \bpi, \HH_j, s_j)
\end{equation}
for $j\in\set{0, 1}$. Define a $\Z_2$-graded morphism
$$\wh{\varphi}:\wt{\pi}_*\bee\oplus\wh{\bhh}\oplus\KER\big(\DD^{S\otimes\e; \wt{\Phi}}\big)^{\op}\to
\wt{\pi}_*\bee\oplus\wh{\bhh}\oplus\bkk^{\op}$$
by $\wh{\varphi}:=\id_{\wt{\pi}_*\bee}\oplus\id_{\wh{\bhh}}\oplus\varphi^{\op}$. By (\ref{eq 5.14}),
\begin{equation}\label{eq 5.20}
(\wh{\varphi}^{-1})^*\wh{\bbb}_t=\sqrt{t}\DDD^{S\otimes\e; \wt{\Phi}}(\alpha(t))+\big(\nabla^{\wt{\pi}_*\e, u}\oplus\nabla^{\wh{\hh}}\oplus\nabla^{\kk, \op}\big)-\frac{c(\wt{T})}{4\sqrt{t}}.
\end{equation}

For $j=0$, by (\ref{eq 5.20}), (\ref{eq 5.13}) and (\ref{eq 5.10}),
\begin{equation}\label{eq 5.21}
\begin{aligned}[b]
&i_{B, 0}^*\big((\wh{\varphi}^{-1})^*\wh{\bbb}_t\big)\\
&=\sqrt{t}\DDD^{S\otimes E; \Phi_0}(\alpha(t))+\big(\nabla^{\pi_*E, u}\oplus\nabla^\wh{H}\oplus\nabla^{\ker(\DD^{S\otimes E; \Phi_0}), \op}\big)-\frac{c(T)}{4\sqrt{t}}\\
&=\sqrt{t}\big(\DDD^{S\otimes E; \Delta_0}(\alpha(t))\oplus\alpha(t)s_{\wh{H}_1}\big)+\big(\nabla^{\pi_*E, u}\oplus\nabla^{\wh{H}_0}\oplus\nabla^{\wh{H}_1}\oplus\nabla^{\ker(\DD^{S\otimes E; \Delta_0}), \op}\oplus\nabla^{\wh{H}_1}\big)-\frac{c(T)}{4\sqrt{t}}\\
&=\wh{\bbb}^{E; \Delta_0}_t\oplus\aaa^{\wh{H}_1}_t\oplus\nabla^{\wh{H}_1},
\end{aligned}
\end{equation}
where $\wh{\bbb}^{E; \Delta_0}_t$ and $\aaa^{\wh{H}_1}_t$ are given by (\ref{eq 3.21}) and (\ref{eq 2.3}), respectively. Let $t<T\in(0, \infty)$. By (\ref{eq 2.14}), (\ref{eq 5.21}) and (\ref{eq 2.12}),
\begin{equation}\label{eq 5.22}
\begin{aligned}[b]
i_{B, 0}^*\CS\big(\wh{\bbb}_t, \wh{\bbb}_T\big)& \equiv i_{B, 0}^*\CS\big((\wh{\varphi}^{-1})^*\wh{\bbb}_t, (\wh{\varphi}^{-1})^*\wh{\bbb}_T\big)\\
&\equiv\CS\big(i_{B, 0}^*(\wh{\varphi}^{-1})^*\wh{\bbb}_t, i_{B, 0}^*(\wh{\varphi}^{-1})^*\wh{\bbb}_T\big)\\
&\equiv\CS\big(\wh{\bbb}^{E; \Delta_0}_t\oplus\aaa^{\wh{H}_1}_t\oplus\nabla^{\wh{H}_1}, \wh{\bbb}^{E; \Delta_0}_{0, T}\oplus\aaa^{\wh{H}_1}_T\oplus\nabla^{\wh{H}_1}\big)\\
&\equiv\CS\big(\wh{\bbb}^{E; \Delta_0}_t, \wh{\bbb}^{E; \Delta_0}_{0, T}\big)+\CS\big(\aaa^{\wh{H}_1}_t, \aaa^{\wh{H}_1}_T\big).
\end{aligned}
\end{equation}
Let $t\to 0$ and $T\to\infty$ in (\ref{eq 5.22}). Since $\wh{\eta}^{\wh{H}_1}\equiv 0$ by (\ref{eq 2.4}), it follows from Lemma \ref{lemma 4.1} that
$$i_{B, 0}^*\wh{\eta}\big(\bee, \wt{\bpi}, \KER\big(\DD^{S\otimes\e; \wt{\Phi}}\big)\big)\equiv\wh{\eta}\big(\EE, \bpi, \KER\big(\DD^{S\otimes E; \Delta_0}\big)\big)\equiv\wt{\eta}(\EE, \bpi, \HH_0, s_0).$$
Thus (\ref{eq 5.19}) holds for $j=0$.

For $j=1$, define a $\Z_2$-graded morphism
$$\wh{h}:\pi_*\EE\oplus\wh{\HH}\oplus\KER\big(\DD^{S\otimes E; \Phi_0}\big)^{\op}\to\pi_*\EE\oplus\wh{\HH}\oplus\KER\big(\DD^{S\otimes E; \Phi_1}\big)^{\op}$$
by $\wh{h}:=\id_{\pi_*\EE}\oplus\id_{\wh{\HH}}\oplus P_{0, 1}^{\op}$. By (\ref{eq 5.20}), (\ref{eq 5.13}) and (\ref{eq 5.10}),
\begin{displaymath}
\begin{split}
i_{B, 1}^*\big((\wh{\varphi}^{-1})^*\wh{\bbb}_t\big)&=\sqrt{t}\DDD^{S\otimes E; \Phi_1}(\alpha(t))+\big(\nabla^{\pi_*E, u}\oplus\nabla^\wh{H}\oplus(P_{0, 1}^{\op})^*\nabla^{\ker(\DD^{S\otimes E; \Phi_1}), \op}\big)-\frac{c(T)}{4\sqrt{t}}\\
&=(\wh{h}^{-1})^*\bigg(\sqrt{t}\big(\DDD^{S\otimes E; \Delta_1}(\alpha(t))\oplus\alpha(t)s_{\wh{H}_0}\big)\\
&\qquad+\big(\nabla^{\pi_*E, u}\oplus\nabla^{\wh{H}_0}\oplus\nabla^{\wh{H}_1}\oplus\nabla^{\ker(\DD^{S\otimes E; \Delta_1}), \op}\oplus\nabla^{\wh{H}_0}\big)-\frac{c(T)}{4\sqrt{t}}\bigg)\\
&=(\wh{h}^{-1})^*\big(\wh{\bbb}^{E; \Delta_1}_t\oplus\aaa^{\wh{H}_0}_t\oplus\nabla^{\wh{H}_0}\big),
\end{split}
\end{displaymath}
where $\aaa^{\wh{H}_0}_t$ is given by (\ref{eq 2.3}). By the same argument in (\ref{eq 5.22}),
\begin{equation}\label{eq 5.23}
i_{B, 1}^*\CS\big(\wh{\bbb}_t, \wh{\bbb}_T\big)=\CS\big(\wh{\bbb}^{E; \Delta_1}_t, \wh{\bbb}^{E; \Delta_1}_T\big)+\CS\big(\aaa^{\wh{H}_0}_t, \aaa^{\wh{H}_0}_T\big).
\end{equation}
Let $t\to 0$ and $T\to\infty$ in (\ref{eq 5.23}). Since $\wh{\eta}^{\wh{H}_0}\equiv 0$ by (\ref{eq 2.4}), it follows from Lemma \ref{lemma 4.1} that
$$i_{B, 1}^*\wh{\eta}\big(\bee, \wt{\bpi}, \KER\big(\DD^{S\otimes\e; \wt{\Phi}}\big)\big)\equiv\wh{\eta}\big(\EE, \bpi, \KER\big(\DD^{S\otimes E; \Delta_1}\big)\big)\equiv\wt{\eta}(\EE, \bpi, \HH_1, s_1).$$
Thus (\ref{eq 5.19}) holds for $j=1$.
\end{proof}
\begin{prop}\label{prop 5.2}
Let $\pi:X\to B$ be a submersion with closed, oriented and spin$^c$ fibers of even dimension, equipped with Riemannian and differential spin$^c$ structures $\bpi_0$ and $\bpi_1$ with the same underlying topological spin$^c$ structures. Let $\EE_0=(E, g^E_0, \nabla^E_0)$ and $\EE_1=(E, g^E_1, \nabla^E_1)$ be geometric bundles over $X$. Denote by $\DD^{S\otimes E}_0$ and $\DD^{S\otimes E}_1$ the twisted spin$^c$ Dirac operators associated to $(\EE_0, \bpi_0)$ and $(\EE_1, \bpi_1)$, respectively. Let $(H_0, s_0)$ and $(H_1, s_1)$ satisfy the AS property for $\DD^{S\otimes E}_0$ and $\DD^{S\otimes E}_1$, and let $\HH_0$ and $\HH_1$ be geometric bundles defining $\KER\big(\DD^{S\otimes E; \Delta_0}_0\big)$ and $\KER\big(\DD^{S\otimes E; \Delta_1}_1\big)$, where $\Delta_0$ and $\Delta_1$ are the odd self-adjoint maps associated to $(H_0, s_0)$ and $(H_1, s_1)$ as in (\ref{eq 3.3}), respectively. Then there exist geometric bundles $\VV_0$ and $\VV_1$ over $B$ and a $\Z_2$-graded morphism
$$h_{01}:\KER\big(\DD^{S\otimes E; \Delta_0}_0\big)\oplus\wh{\VV}_0\to\KER\big(\DD^{S\otimes E; \Delta_1}_1\big)\oplus\wh{\VV}_1$$
such that
\begin{equation}\label{eq 5.24}
\begin{aligned}[b]
&\wt{\eta}(\EE_1, \bpi_1, \HH_1, s_1)-\wt{\eta}(\EE_0, \bpi_0, \HH_0, s_0)\\
&\equiv\int_{X/B}\big(T\wh{A}(\nabla^{T^VX}_0, \nabla^{T^VX}_1)\wedge e^{\frac{1}{2}c_1(\nabla^\lambda_0)}+\wh{A}(\nabla^{T^VX}_1)\wedge e^{\frac{1}{2}Tc_1(\nabla^\lambda_0, \nabla^\lambda_1)}\big)\wedge\ch(\nabla^E_0)\\
&\quad+\int_{X/B}\todd(\nabla^{T^VX}_1)\wedge\CS(\nabla^E_0, \nabla^E_1)-\CS\big(\nabla^{\ker(\DD^{S\otimes E; \Delta_0}_0)}\oplus\nabla^{\wh{V}_0}, h_{01}^*(\nabla^{\ker(\DD^{S\otimes E; \Delta_1}_1)}\oplus\nabla^{\wh{V}_1})\big).
\end{aligned}
\end{equation}
\end{prop}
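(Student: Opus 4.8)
The plan is to run the homotopy argument over $\wt{X}\to\wt{B}$ exactly as in the proof of Proposition \ref{prop 5.1}, the only genuinely new feature being that now the Riemannian and differential spin$^c$ structure varies as well, so the interpolating data over $\wt{X}$ is no longer a pullback from $X$ and the fibre integral $\int_{\wt{X}/X}\todd(\nabla^{T^V\wt{X}})\wedge\ch(\nabla^\e)$, which vanished modulo $\im(d)$ in (\ref{eq 4.25}), now produces the characteristic-form terms on the right-hand side of (\ref{eq 5.24}). First I would build the interpolating data: since $\bpi_0$ and $\bpi_1$ have the same underlying topological spin$^c$ structure, the line bundle $\lambda\to X$ is fixed and only $(g^\lambda,\nabla^\lambda)$ and $(T^HX,g^{T^VX})$ move, and all of these live in convex (hence contractible) sets, so I would choose smooth paths joining the two ends and, together with smooth paths joining $(g^E_0,\nabla^E_0)$ to $(g^E_1,\nabla^E_1)$ as in (\ref{eq 2.6})--(\ref{eq 2.7}), assemble a Riemannian and differential spin$^c$ structure $\wt{\bpi}$ on $\wt{\pi}:\wt{X}\to\wt{B}$ with $i_{X,j}^*\wt{\bpi}=\bpi_j$ and a geometric bundle $\bee$ over $\wt{X}$ with $i_{X,j}^*\bee=\EE_j$. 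Write $\DD^{S\otimes\e}$ for the associated twisted spin$^c$ Dirac operator.

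Next I would reduce to the case in which the two endpoint AS pairs are the restrictions of a single AS pair over $\wt{X}$. The naive linear interpolation used in Proposition \ref{prop 5.1} is not available here, since the Dirac operator itself moves with $t$; instead I would apply \cite[Proposition 2.2]{AS71} to $\DD^{S\otimes\e}$ to produce an AS pair $(\hh,\wt{s})$ for $\DD^{S\otimes\e}$ with $\hh$ trivial, so that the restrictions $i_{B,j}^*(\hh,\wt{s})$ are AS pairs for $\DD^{S\otimes E}_j$. Proposition \ref{prop 5.1}, applied on the fixed submersion equipped with $\bpi_j$ for each $j\in\set{0,1}$, then supplies geometric bundles over $B$ and $\Z_2$-graded morphisms relating $\wt{\eta}(\EE_j,\bpi_j,\HH_j,s_j)$ to the eta form of $i_{B,j}^*(\hh,\wt{s})$ through a Chern--Simons term; Lemma \ref{lemma 5.1} absorbs the elementary summands that appear along the way. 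Folding all of these auxiliary bundles into $\VV_0,\VV_1$ and composing the morphisms (together with the one produced below), it then suffices to prove (\ref{eq 5.24}) when the AS pairs at the two ends are $i_{B,0}^*(\hh,\wt{s})$ and $i_{B,1}^*(\hh,\wt{s})$.

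With this reduction in place the argument mirrors (\ref{eq 5.14})--(\ref{eq 5.23}) essentially verbatim. Let $\wt{\Phi}$ be the odd self-adjoint map attached to $(\hh,\wt{s})$ as in (\ref{eq 3.3}); then $\ker(\DD^{S\otimes\e;\wt{\Phi}})\to\wt{B}$ exists, and the parallel-transport morphism (\ref{eq 2.5}) furnishes, at the two endpoints, the $\Z_2$-graded morphism $h_{01}$ of the statement. I would form the MFFL-type rescaled Bismut superconnection $\wh{\bbb}_t$ on $\wt{\pi}_*\e\oplus\wh{\hh}\oplus\ker(\DD^{S\otimes\e;\wt{\Phi}})^{\op}\to\wt{B}$, record the resulting local FIT and apply Stokes along the fibres of $\wt{B}\to B$ as in (\ref{eq 5.17})--(\ref{eq 5.18}), and use the decoupling of $\wh{\bbb}_t$ for $t\le a$ together with Lemma \ref{lemma 4.1} and (\ref{eq 2.4}) to identify the two boundary terms with $\wt{\eta}(\EE_0,\bpi_0,\cdot,\cdot)$ and $\wt{\eta}(\EE_1,\bpi_1,\cdot,\cdot)$. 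The only step that departs from (\ref{eq 4.25}) is the bulk term: after rewriting $\int_{\wt{B}/B}\int_{\wt{X}/\wt{B}}=\int_{X/B}\int_{\wt{X}/X}$ one is left with the transgression identity
\[
-\int_{\wt{X}/X}\todd(\nabla^{T^V\wt{X}})\wedge\ch(\nabla^\e)\equiv T\todd(\nabla^{T^VX}_0,\nabla^{T^VX}_1)\wedge\ch(\nabla^E_0)+\todd(\nabla^{T^VX}_1)\wedge\CS(\nabla^E_0,\nabla^E_1)
\]
in $\frac{\Omega^{\bullet}(X)}{\im(d)}$, which is just the Leibniz rule $T(\alpha\wedge\beta)\equiv T\alpha\wedge\beta|_0+\alpha|_1\wedge T\beta$ for transgression forms of closed forms, applied with $\alpha=\todd$, $\beta=\ch$ and $T\ch=\CS$; expanding $\todd=\wh{A}\wedge e^{\frac{1}{2}c_1(\lambda)}$ by the same rule rewrites $T\todd(\nabla^{T^VX}_0,\nabla^{T^VX}_1)$ as $T\wh{A}(\nabla^{T^VX}_0,\nabla^{T^VX}_1)\wedge e^{\frac{1}{2}c_1(\nabla^\lambda_0)}+\wh{A}(\nabla^{T^VX}_1)\wedge e^{\frac{1}{2}Tc_1(\nabla^\lambda_0,\nabla^\lambda_1)}$, which is exactly the characteristic-form part of (\ref{eq 5.24}).

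The main obstacle I anticipate is organizational rather than conceptual: keeping the sign conventions in Stokes' theorem (\ref{eq 2.1}) and in the Leibniz rule for transgression forms consistent throughout, and correctly packaging the several batches of auxiliary geometric bundles and comparison morphisms — from the two applications of Proposition \ref{prop 5.1}, from Lemma \ref{lemma 5.1}, and from the parallel-transport morphism over $\wt{X}$ — into the single triple $(\VV_0,\VV_1,h_{01})$ required by the statement, while checking that the connections on $\ker(\DD^{S\otimes\e;\wt{\Phi}})$ restricted to the two ends are indeed $\nabla^{\ker(\DD^{S\otimes E;\Delta_0}_0)}\oplus\nabla^{\wh{V}_0}$ and $h_{01}^*(\nabla^{\ker(\DD^{S\otimes E;\Delta_1}_1)}\oplus\nabla^{\wh{V}_1})$.
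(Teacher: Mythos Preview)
Your proposal is correct and follows essentially the same strategy as the paper: interpolate over $\wt{X}\to\wt{B}$, choose an AS pair $(\kk,\wt{r})$ for the interpolated Dirac operator, run the MFFL-type superconnection plus Stokes argument to obtain the formula for the restricted AS pairs at the endpoints, and then invoke Proposition~\ref{prop 5.1} at each end to pass from these restricted pairs to the given $(H_j,s_j)$, packaging the resulting auxiliary bundles into $\VV_0,\VV_1$.

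The one place where the paper differs slightly from your sketch is in extracting the characteristic-form terms. Rather than appealing to a Leibniz rule for transgressions, the paper builds the interpolating data via a piecewise path $\wt{c}(t)$ that varies $(T^HX,g^{T^VX})$ on $[0,\tfrac13]$, then $(g^\lambda,\nabla^\lambda)$ on $[\tfrac13,\tfrac23]$, then $(g^E,\nabla^E)$ on $[\tfrac23,1]$; the integral $-\int_{\wt{X}/X}\todd(\nabla^{T^V\wt{X}})\wedge\ch(\nabla^\e)$ then decomposes literally as a sum of three pieces yielding the right-hand side of (\ref{eq 5.24}). Your Leibniz-rule argument gives the same answer modulo $\im(d)$ (since transgressions are path-independent up to exact forms), so this is a cosmetic rather than substantive difference.
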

\begin{proof}
Since the space of splitting maps is affine, there exists a smooth path of horizontal distributions $\set{T^H_tX\to X}_{t\in I}$ joining $T^H_0X\to X$ and $T^H_1X\to X$. Define smooth paths of Hermitian metrics and unitary connections 
$$g^E_t,\qquad g^{T^VX}_t,\qquad g^\lambda_t,\qquad\nabla^E_t,\qquad\nabla^\lambda_t$$ 
by (\ref{eq 2.6}) with respect to 
$$(g^E_0, g^E_1),\qquad(g^{T^VX}_0, g^{T^VX}_1),\qquad(g^\lambda_0, g^\lambda_1),\qquad(\nabla^E_0, \nabla^E_1),\qquad(\nabla^\lambda_0, \nabla^\lambda_1),$$ 
respectively. Then
$$c(t)=(g^E_t, \nabla^E_t, T^H_tX, g^{T^VX}_t, g^\lambda_t, \nabla^\lambda_t)$$
is a smooth path joining $c(0)$ and $c(1)$. The new path $\wt{c}(t)$ defined by
$$\wt{c}(t)=\left\{
\begin{array}{ll}
(g^E_0, \nabla^E_0, T^H_{3t}X, g^{T^VX}_{3t}, g^\lambda_0, \nabla^\lambda_0), & \displaystyle\textrm{ for } t\in\bigg[0, \frac{1}{3}\bigg]\\\\
(g^E_0, \nabla^E_0, T^H_1X, g^{T^VX}_1, g^\lambda_{3t-1}, \nabla^\lambda_{3t-1}), & \displaystyle\textrm{ for } t\in\bigg[\frac{1}{3}, \frac{2}{3}\bigg]\\\\
(g^E_{3t-2}, \nabla^E_{3t-2}, T^H_1X, g^{T^VX}_1, g^\lambda_1, \nabla^\lambda_1), & \displaystyle\textrm{ for } t\in\bigg[\frac{2}{3}, 1\bigg]
\end{array}
\right.$$
also joins $c(0)$ and $c(1)$.

Define complex vector bundles $\e\to\wt{X}$ and $\wt{\lambda}\to\wt{X}$ by $\e=p_X^*E$ and $\wt{\lambda}=p_X^*\lambda$, respectively. Define a geometric bundle $\bee=(\e, g^\e, \nabla^\e)$ over $\wt{X}$ and a Riemannian and differential spin$^c$ structure $\wt{\bpi}=(T^H\wt{X}, g^{T^V\wt{X}}, g^{\wt{\lambda}}, \nabla^{\wt{\lambda}})$ on $\wt{\pi}:\wt{X}\to\wt{B}$ so that the restriction of
$$(g^\e, \nabla^\e, T^H\wt{X}, g^{T^V\wt{X}}, g^{\wt{\lambda}}, \nabla^{\wt{\lambda}})$$
to $X\times\set{t}$ is given by $\wt{c}(t)$ for each $t\in I$.

Denote by $\DD^{S\otimes\e}$ the twisted spin$^c$ Dirac operator associated to $(\bee, \wt{\bpi})$. Let $(\kk, \wt{r})$ satisfy the AS property for $\DD^{S\otimes\e}$ and $\bkk$ a geometric bundle defining $\KER\big(\DD^{S\otimes\e; \wt{\Phi}}\big)$, where $\wt{\Phi}$ is the odd self-adjoint map associated to $(\kk, \wt{r})$ as in (\ref{eq 3.3}).

For $j\in\set{0, 1}$, define a geometric bundle $\KK_j$ over $B$ by $\KK_j:=i_{B, j}^*\bkk$. Note that
\begin{equation}\label{eq 5.25}
\begin{split}
i_{B, 0}^*(\wt{\pi}_*\e\oplus\wh{\kk})&=\pi_*E\oplus\wh{K}_0,\\
i_{B, 1}^*(\wt{\pi}_*\e\oplus\wh{\kk})&=\pi_*E\oplus\wh{K}_1.
\end{split}
\end{equation}
Define maps $r_j:K_j\to(\pi_*E)^-$ and $\Phi_j:\pi_*E\oplus\wh{K}_j\to\pi_*E\oplus\wh{K}_j$ by
\begin{equation}\label{eq 5.26}
r_j:=\wt{r}|_{i_{B, j}^*\kk}\quad\textrm{ and }\quad \Phi_j:=\wt{\Phi}|_{i_{B, j}^*\kk},
\end{equation}
respectively. By (\ref{eq 5.25}) and (\ref{eq 5.26}),
$$\big(\DD^{S\otimes\e}_++\wt{r}\big)|_{i_{B, j}^*((\wt{\pi}_*\e)^+\oplus\kk)}=\DD^{S\otimes E}_{j, +}+r_j.$$
Thus $(K_j, r_j)$ satisfies the AS property for $\DD^{S\otimes E}_j$ and $\Phi_j$ is the odd self-adjoint map associated to $(K_j, r_j)$ as in (\ref{eq 3.3}). As shown in \S\ref{s 3.2}, the vector bundle $\ker\big(\DD^{S\otimes E; \Phi_j}_j\big)\to B$ exists. Since
$$\DD^{S\otimes\e; \wt{\Phi}}|_{i_{B, j}^*(\wt{\pi}_*\e\oplus\wh{\kk})}=\DD^{S\otimes E; \Phi_j}_j,$$
it follows that
$$i_{B, j}^*\ker\big(\DD^{S\otimes\e; \wt{\Phi}}\big)=\ker\big(\DD^{S\otimes E; \Phi_j}_j\big)\quad\textrm{ and }\quad i_{B, j}^*g^{\ker(\DD^{S\otimes\e; \wt{\Phi}})}=g^{\ker(\DD^{S\otimes E; \Phi_j}_j)}.$$
Let
\begin{equation}\label{eq 5.27}
P_{01}:\KER\big(\DD^{S\otimes E; \Phi_0}_0\big)\to\KER\big(\DD^{S\otimes E; \Phi_1}_1\big)
\end{equation}
be the $\Z_2$-graded morphism (\ref{eq 2.5}). Note that $\KK_j$ is the geometric bundle defining $\KER\big(\DD^{S\otimes E; \Phi_j}_j\big)$, and
\begin{equation}\label{eq 5.28}
\nabla^{\ker(\DD^{S\otimes E; \Phi_0}_0)}\quad\textrm{ and }\quad P_{01}^*\nabla^{\ker(\DD^{S\otimes E; \Phi_1}_1)}
\end{equation}
are $\Z_2$-graded unitary connections on $\ker\big(\DD^{S\otimes E; \Phi_0}_0\big)\to B$.

Define a $\Z_2$-graded geometric bundle $\bjj=(\jj, g^\jj, \nabla^\jj)$ over $\wt{B}$ by
$$\jj=p_B^*\ker\big(\DD^{S\otimes E; \Phi_0}_0\big),\qquad g^\jj=p_B^*g^{\ker(\DD^{S\otimes E; \Phi_0}_0)}$$
and $\nabla^\jj$ by (\ref{eq 2.6}) and (\ref{eq 2.7}) with respect to (\ref{eq 5.28}). Then
\begin{equation}\label{eq 5.29}
i_{B, 0}^*\nabla^\jj=\nabla^{\ker(\DD^{S\otimes E; \Phi_0}_0)}\quad\textrm{ and }\quad i_{B, 1}^*\nabla^\jj=P_{01}^*\nabla^{\ker(\DD^{S\otimes E; \Phi_1}_1)}.
\end{equation}
Since $i_{B, 0}\circ p_B:\wt{B}\to\wt{B}$ is smoothly homotopic to $\id_{\wt{B}}$,
$$\jj\cong\ker\big(\DD^{S\otimes\e; \wt{\Phi}}\big)$$
as $\Z_2$-graded complex vector bundles. By \cite[(2) of Remark 2.1]{H23}, let
$$\varphi:\KER\big(\DD^{S\otimes\e; \wt{\Phi}}\big)\to\bjj$$
be a $\Z_2$-graded morphism. Then $\wt{\nabla}^{\ker(\DD^{S\otimes\e; \wt{\Phi}})}:=\varphi^*\nabla^\jj$ is a $\Z_2$-graded unitary connection on $\ker\big(\DD^{S\otimes\e; \wt{\Phi}}\big)\to\wt{B}$.

Define a rescaled Bismut superconnection on 
$$\wt{\pi}_*\e\oplus\wh{\kk}\oplus\ker\big(\DD^{S\otimes\e; \wt{\Phi}}\big)^{\op}\to\wt{B}$$ 
by
\begin{equation}\label{eq 5.30}
\wh{\bbb}_t=\sqrt{t}\DDD^{S\otimes\e; \wt{\Phi}}(\alpha(t))+\big(\nabla^{\wt{\pi}_*\e, u}\oplus\nabla^{\wh{\kk}}\oplus\wt{\nabla}^{\ker(\DD^{S\otimes\e; \wt{\Phi}}), \op}\big)-\frac{c(\wt{T})}{4\sqrt{t}}.
\end{equation}
By (\ref{eq 3.22}),
\begin{equation}\label{eq 5.31}
\lim_{t\to\infty}\ch(\wh{\bbb}_t)=0.
\end{equation}
By (\ref{eq 3.23}),
\begin{equation}\label{eq 5.32}
\begin{aligned}[b]
\lim_{t\to 0}\ch(\wh{\bbb}_t)&=\int_{\wt{X}/\wt{B}}\todd(\nabla^{T^V\wt{X}})\wedge\ch(\nabla^\e)-\ch(\wt{\nabla}^{\ker(\DD^{S\otimes\e; \wt{\Phi}})})\\
&=\int_{\wt{X}/\wt{B}}\todd(\nabla^{T^V\wt{X}})\wedge\ch(\nabla^\e)-\ch(\nabla^\jj).
\end{aligned}
\end{equation}
Denote by $\wh{\eta}\big(\bee, \wt{\bpi}, \KER\big(\DD^{S\otimes\e; \wt{\Phi}}\big)\big)$ the Bismut--Cheeger eta form associated to $\wh{\bbb}_t$ (\ref{eq 5.30}). By (\ref{eq 5.31}) and (\ref{eq 5.32}),
\begin{equation}\label{eq 5.33}
d\wh{\eta}\big(\bee, \wt{\bpi}, \KER\big(\DD^{S\otimes\e; \wt{\Phi}}\big)\big)=\int_{\wt{X}/\wt{B}}\todd(\nabla^{T^V\wt{X}})\wedge\ch(\nabla^\e)-\ch(\nabla^\jj).
\end{equation}
By (\ref{eq 2.1}), (\ref{eq 5.33}) and (\ref{eq 5.29}),
\begin{equation}\label{eq 5.34}
\begin{aligned}[b]
&i_{B, 1}^*\wh{\eta}\big(\bee, \wt{\bpi}, \KER\big(\DD^{S\otimes\e; \wt{\Phi}}\big)\big)-i_{B, 0}^*\wh{\eta}\big(\bee, \wt{\bpi}, \KER\big(\DD^{S\otimes\e; \wt{\Phi}}\big)\big)\\
&\equiv-\int_{\wt{B}/B}d^{\wt{B}}\wh{\eta}\big(\bee, \wt{\bpi}, \KER\big(\DD^{S\otimes\e; \wt{\Phi}}\big)\big)\\
&\equiv-\int_{\wt{B}/B}\bigg(\int_{\wt{X}/\wt{B}}\todd(\nabla^{T^V\wt{X}})\wedge\ch(\nabla^\e)-\ch(\nabla^\jj)\bigg)\\
&\equiv-\int_{\wt{B}/B}\int_{\wt{X}/\wt{B}}\todd(\nabla^{T^V\wt{X}})\wedge\ch(\nabla^\e)-\CS\big(\nabla^{\ker(\DD^{S\otimes E; \Phi_0}_0)}, P_{01}^*\nabla^{\ker(\DD^{S\otimes E; \Phi_1}_1)}\big).
\end{aligned}
\end{equation}
Since $\int_{X/B}\circ\int_{\wt{X}/X}=\int_{\wt{X}/B}=\int_{\wt{B}/B}\circ\int_{\wt{X}/\wt{B}}$, it follows from the definition of the smooth path $\wt{c}(t)$ that
\begin{displaymath}
\begin{split}
&-\int_{\wt{B}/B}\int_{\wt{X}/\wt{B}}\todd(\nabla^{T^V\wt{X}})\wedge\ch(\nabla^\e)\\
&=-\int_{X/B}\int_{\wt{X}/X}\todd(\nabla^{T^V\wt{X}})\wedge\ch(\nabla^\e)\\
&\equiv\int_{X/B}\big(T\wh{A}(\nabla^{T^VX}_0, \nabla^{T^VX}_1)\wedge e^{\frac{1}{2}c_1(\nabla^\lambda_0)}+\wh{A}(\nabla^{T^VX}_1)\wedge e^{\frac{1}{2}Tc_1(\nabla^\lambda_0, \nabla^\lambda_1)}\big)\wedge\ch(\nabla^E_0)\\
&\qquad+\int_{X/B}\todd(\nabla^{T^VX}_1)\wedge\CS(\nabla^E_0, \nabla^E_1).
\end{split}
\end{displaymath}
Thus (\ref{eq 5.34}) becomes
\begin{equation}\label{eq 5.35}
\begin{aligned}[b]
&i_{B, 1}^*\wh{\eta}\big(\bee, \wt{\bpi}, \KER\big(\DD^{S\otimes\e; \wt{\Phi}}\big)\big)-i_{B, 0}^*\wh{\eta}\big(\bee, \wt{\bpi}, \KER\big(\DD^{S\otimes\e; \wt{\Phi}}\big)\big)\\
&\equiv\int_{X/B}\big(T\wh{A}(\nabla^{T^VX}_0, \nabla^{T^VX}_1)\wedge e^{\frac{1}{2}c_1(\nabla^\lambda_0)}+\wh{A}(\nabla^{T^VX}_1)\wedge e^{\frac{1}{2}Tc_1(\nabla^\lambda_0, \nabla^\lambda_1)}\big)\wedge\ch(\nabla^E_0)\\
&\qquad+\int_{X/B}\todd(\nabla^{T^VX}_1)\wedge\CS(\nabla^E_0, \nabla^E_1)-\CS\big(\nabla^{\ker(\DD^{S\otimes E; \Phi_0}_0)}, P_{01}^*\nabla^{\ker(\DD^{S\otimes E; \Phi_1}_1)}\big).
\end{aligned}
\end{equation}

We claim that for $j\in\set{0, 1}$,
\begin{equation}\label{eq 5.36}
i_{B, j}^*\wh{\eta}\big(\bee, \wt{\bpi}, \KER\big(\DD^{S\otimes\e; \wt{\Phi}}\big)\big)\equiv\wt{\eta}(\EE_j, \bpi_j, \KK_j, r_j).
\end{equation}
We prove (\ref{eq 5.36}) for $j=0$. The argument for $j=1$ is similar. Define a $\Z_2$-graded morphism
$\wh{\varphi}:\wt{\pi}_*\bee\oplus\wh{\bkk}\oplus\KER\big(\DD^{S\otimes\e; \wt{\Phi}}\big)^{\op}\to\wt{\pi}_*\bee\oplus\wh{\bkk}\oplus\bjj^{\op}$ by $\wh{\varphi}:=\id_{\wt{\pi}_*\bee}\oplus\id_{\wh{\bkk}}\oplus\varphi^{\op}$. By (\ref{eq 5.30}),
\begin{equation}\label{eq 5.37}
(\wh{\varphi}^{-1})^*\wh{\bbb}_t=\sqrt{t}\DDD^{S\otimes\e; \wt{\Phi}}(\alpha(t))+\big(\nabla^{\wt{\pi}_*\e, u}\oplus\nabla^{\wh{\kk}}\oplus\nabla^{\jj, \op}\big)-\frac{c(\wt{T})}{4\sqrt{t}}.
\end{equation}
It follows from (\ref{eq 5.37}) and (\ref{eq 5.28}) that
\begin{equation}\label{eq 5.38}
\begin{aligned}[b]
i_{B, 0}^*\big((\wh{\varphi}^{-1})^*\wh{\bbb}_t\big)&=\sqrt{t}\DDD^{S\otimes E; \Phi_0}_0(\alpha(t))+\big(\nabla^{\pi_*E, u}\oplus\nabla^{\wh{K}_0}\oplus\nabla^{\ker(\DD^{S\otimes E; \Phi_0}_0), \op}\big)-\frac{c(T)}{4\sqrt{t}}\\
&=\wh{\bbb}^{E; \Phi_0}_{0, t}.
\end{aligned}
\end{equation}
Let $t<T\in(0, \infty)$. By (\ref{eq 5.38}) and (\ref{eq 2.14}),
\begin{equation}\label{eq 5.39}
\begin{aligned}[b]
i_{B, 0}^*\CS\big(\wh{\bbb}_t, \wh{\bbb}_T\big)&\equiv i_{B, 0}^*\CS\big((\wh{\varphi}^{-1})^*\wh{\bbb}_t, (\wh{\varphi}^{-1})^*\wh{\bbb}_T\big)\\
&\equiv\CS\big(i_{B, 0}^*(\wh{\varphi}^{-1})^*\wh{\bbb}_t, i_{B, 0}^*(\wh{\varphi}^{-1})^*\wh{\bbb}_T\big)\\
&\equiv\CS\big(\wh{\bbb}^{E, \Phi_0}_{0, t}, \wh{\bbb}^{E, \Phi_0}_{0, T}\big).
\end{aligned}
\end{equation}
By letting $t\to 0$ and $T\to\infty$ in (\ref{eq 5.39}), it follows from Lemma \ref{lemma 4.1} that it becomes
\begin{displaymath}
\begin{split}
i_{B, 0}^*\wh{\eta}\big(\bee, \wt{\bpi}, \KER\big(\DD^{S\otimes\e; \wt{\Phi}}\big)\big)&\equiv\wh{\eta}\big(\EE_0, \bpi, \KER\big(\DD^{S\otimes E; \Phi_0}_0\big)\big)\\
&\equiv\wt{\eta}(\EE_0, \bpi, \KK_0, r_0).
\end{split}
\end{displaymath}
Thus (\ref{eq 5.36}) holds, and therefore (\ref{eq 5.35}) becomes
\begin{equation}\label{eq 5.40}
\begin{aligned}
&\wt{\eta}(\EE_1, \bpi, \KK_1, r_1)-\wt{\eta}(\EE_0, \bpi, \KK_0, r_0)\\
&\equiv\int_{X/B}\big(T\wh{A}(\nabla^{T^VX}_0, \nabla^{T^VX}_1)\wedge e^{\frac{1}{2}c_1(\nabla^\lambda_0)}+\wh{A}(\nabla^{T^VX}_1)\wedge e^{\frac{1}{2}Tc_1(\nabla^\lambda_0, \nabla^\lambda_1)}\big)\wedge\ch(\nabla^E_0)\\
&\qquad+\int_{X/B}\todd(\nabla^{T^VX}_1)\wedge\CS(\nabla^E_0, \nabla^E_1)-\CS\big(\nabla^{\ker(\DD^{S\otimes E; \Phi_0}_0)}, P_{01}^*\nabla^{\ker(\DD^{S\otimes E; \Phi_1}_1)}\big).
\end{aligned}
\end{equation}

Since $(H_0, s_0)$ and $(K_0, r_0)$ satisfy the AS property for $\DD^{S\otimes E}_0$, it follows from Proposition \ref{prop 5.1} that there exists a $\Z_2$-graded morphism
\begin{equation}\label{eq 5.41}
P_{s_0r_0}:\KER\big(\DD^{S\otimes E; \Delta_0}_0\big)\oplus\wh{\KK}_0\to\KER\big(\DD^{S\otimes E; \Phi_0}_0\big)\oplus\wh{\HH}_0
\end{equation}
such that
\begin{equation}\label{eq 5.42}
\wt{\eta}(\EE_0, \bpi, \KK_0, r_0)-\wt{\eta}(\EE_0, \bpi, \HH_0, s_0)\equiv-\CS\big(\nabla^{\ker(\DD^{S\otimes E; \Delta_0}_0)}\oplus\nabla^{\wh{K}_0}, P_{s_0r_0}^*(\nabla^{\ker(\DD^{S\otimes E; \Phi_0})}\oplus\nabla^{\wh{H}_0})\big).
\end{equation}
Similarly, since $(H_1, s_1)$ and $(K_1, r_1)$ satisfy the AS property for $\DD^{S\otimes E}_1$, it follows from Proposition \ref{prop 5.1} that there exists a $\Z_2$-graded morphism
\begin{equation}\label{eq 5.43}
P_{s_1r_1}:\KER\big(\DD^{S\otimes E; \Delta_1}_1\big)\oplus\wh{\KK}_1\to\KER\big(\DD^{S\otimes E; \Phi_1}_1\big)\oplus\wh{\HH}_1
\end{equation}
such that
\begin{equation}\label{eq 5.44}
\wt{\eta}(\EE_1, \bpi, \KK_1, r_1)-\wt{\eta}(\EE_1, \bpi, \HH_1, s_1)\equiv-\CS\big(\nabla^{\ker(\DD^{S\otimes E; \Delta_1}_1)}\oplus\nabla^{\wh{K}_1}, P_{s_1r_1}^*(\nabla^{\ker(\DD^{S\otimes E; \Phi_1}_1)}\oplus\nabla^{\wh{H}_1})\big).
\end{equation}
By (\ref{eq 5.27}), (\ref{eq 5.41}) and (\ref{eq 5.43}), the composition
\begin{center}
\begin{tikzcd}
\KER\big(\DD^{S\otimes E; \Delta_0}_0\big)\oplus\wh{\KK}_0\oplus\wh{\HH}_1 \arrow{d}[swap]{P_{s_0r_0}\oplus\id_{\wh{\HH}_1}} \arrow[ddrr, dashed]{}{h_{01}} & & \\ \KER\big(\DD^{S\otimes E; \Phi_0}_0\big)\oplus\wh{\HH}_0\oplus\wh{\HH}_1 \arrow{d}[swap]{P_{01}\oplus\id_{\wh{\HH}_0}\oplus\id_{\wh{\HH}_1}} & & \\
\KER\big(\DD^{S\otimes E; \Phi_1}_1\big)\oplus\wh{\HH}_0\oplus\wh{\HH}_1 \arrow{rr}[swap]{P_{s_1r_1}^{-1}\oplus\id_{\wh{\HH}_0}} & & \KER\big(\DD^{S\otimes E; \Delta_1}_1\big)\oplus\wh{\KK}_1\oplus\wh{\HH}_0
\end{tikzcd}
\end{center}
is a $\Z_2$-graded morphism. Set $\VV_0=\KK_0\oplus\HH_1$, $\VV_1=\KK_1\oplus\HH_0$, and write
$$h_{01}:\KER\big(\DD^{S\otimes E; \Delta_0}_0\big)\oplus\wh{\VV}_0\to\KER\big(\DD^{S\otimes E; \Delta_1}_1\big)\oplus\wh{\VV}_1$$
for the above composition. By (\ref{eq 5.40}), (\ref{eq 5.42}) and (\ref{eq 5.44}),
\begin{equation}\label{eq 5.45}
\begin{aligned}[b]
&\wt{\eta}(\EE_1, \bpi, \HH_1, s_1)-\wt{\eta}(\EE_0, \bpi, \HH_0, s_0)\\
&\equiv\big(\wt{\eta}(\EE_1, \bpi, \HH_1, s_1)-\wt{\eta}(\EE_1, \bpi, \KK_1, r_1)\big)+\big(\wt{\eta}(\EE_1, \bpi, \KK_1, r_1)-\wt{\eta}(\EE_0, \bpi, \KK_0, r_0)\big)\\
&\quad+\big(\wt{\eta}(\EE_0, \bpi, \KK_0, r_0)-\wt{\eta}(\EE_0, \bpi, \HH_0, s_0)\big)\\
&\equiv\int_{X/B}\big(T\wh{A}(\nabla^{T^VX}_0, \nabla^{T^VX}_1)\wedge e^{\frac{1}{2}c_1(\nabla^\lambda_0)}+\wh{A}(\nabla^{T^VX}_1)\wedge e^{\frac{1}{2}Tc_1(\nabla^\lambda_0, \nabla^\lambda_1)}\big)\wedge\ch(\nabla^E_0)\\
&\quad+\int_{X/B}\todd(\nabla^{T^VX}_1)\wedge\CS(\nabla^E_0, \nabla^E_1)-\CS\big(\nabla^{\ker(\DD^{S\otimes E; \Phi_0}_0)}, P_{01}^*\nabla^{\ker(\DD^{S\otimes E; \Phi_1}_1)}\big)\\
&\qquad+\CS\big(\nabla^{\ker(\DD^{S\otimes E; \Delta_1}_1)}\oplus\nabla^{\wh{K}_1}, P_{s_1r_1}^*(\nabla^{\ker(\DD^{S\otimes E; \Phi_1})}\oplus\nabla^{\wh{H}_1})\big)\\
&\qquad-\CS\big(\nabla^{\ker(\DD^{S\otimes E; \Delta_0}_0)}\oplus\nabla^{\wh{K}_0}, P_{s_0r_0}^*(\nabla^{\ker(\DD^{S\otimes E; \Phi_0})}\oplus\nabla^{\wh{H}_0})\big).
\end{aligned}
\end{equation}
Write $\HH=\HH_0\oplus\HH_1$. By (\ref{eq 2.12}), (\ref{eq 2.14}), (\ref{eq 2.11}) and (\ref{eq 2.9}), the sum of the last three terms of the right-hand side of (\ref{eq 5.45}) becomes
\begin{equation}\label{eq 5.46}
\begin{aligned}[b]
&-\CS\big(\nabla^{\ker(\DD^{S\otimes E; \Phi_0}_0)}, P_{01}^*\nabla^{\ker(\DD^{S\otimes E; \Phi_1}_1)}\big)\\
&\quad+\CS\big(\nabla^{\ker(\DD^{S\otimes E; \Delta_1}_1)}\oplus\nabla^{\wh{K}_1}, P_{s_1r_1}^*(\nabla^{\ker(\DD^{S\otimes E; \Phi_1})}\oplus\nabla^{\wh{H}_1})\big)\\
&\qquad-\CS\big(\nabla^{\ker(\DD^{S\otimes E; \Delta_0}_0)}\oplus\nabla^{\wh{K}_0}, P_{s_0r_0}^*(\nabla^{\ker(\DD^{S\otimes E; \Phi_0})}\oplus\nabla^{\wh{H}_0})\big)\\
&\equiv-\CS\big(\nabla^{\ker(\DD^{S\otimes E; \Delta_0}_0)}\oplus\nabla^{\wh{V}_0}, (P_{s_0r_0}\oplus\id_{\wh{\HH}_1})^*(\nabla^{\ker(\DD^{S\otimes E; \Phi_0}_0)}\oplus\nabla^{\wh{H}})\big)\\
&\quad-\CS\big(\nabla^{\ker(\DD^{S\otimes E; \Phi_0}_0)}\oplus\nabla^{\wh{H}}, (P_{01}\oplus\id_{\wh{\HH}})^*(\nabla^{\ker(\DD^{S\otimes E; \Phi_1}_1)}\oplus\nabla^{\wh{H}})\big)\\
&\qquad-\CS\big(\nabla^{\ker(\DD^{S\otimes E; \Phi_1}_1)}\oplus\nabla^{\wh{H}}, (P_{s_1r_1}^{-1}\oplus\id_{\wh{\HH}_0})^*(\nabla^{\ker(\DD^{S\otimes E; \Delta_1})}\oplus\nabla^{\wh{V}_1})\big)\\
&\equiv-\CS\big(\nabla^{\ker(\DD^{S\otimes E; \Delta_0}_0)}\oplus\nabla^{\wh{V}_0}, h_{01}^*(\nabla^{\ker(\DD^{S\otimes E; \Delta_1})}\oplus\nabla^{\wh{V}_1})\big).
\end{aligned}
\end{equation}
It follows from (\ref{eq 5.45}) and (\ref{eq 5.46}) that (\ref{eq 5.24}) holds.
\end{proof}
\begin{prop}\label{prop 5.3}
Let $\pi:X\to B$ be a submersion with closed, oriented and spin$^c$ fibers of even dimension, equipped with a Riemannian and differential spin$^c$ structure $\bpi$, and let $\EE$ and $\FF$ be geometric bundles over $X$. Denote by $\DD^{S\otimes E}$ and $\DD^{S\otimes F}$ the twisted spin$^c$ Dirac operators associated to $(\EE, \bpi)$ and $(\FF, \bpi)$, respectively. Let $(H_F, s_F)$ satisfy the AS property for $\DD^{S\otimes F}$ and $\HH_F$ be a geometric bundle defining $\KER\big(\DD^{S\otimes F; \Delta_F}\big)$, where $\Delta_F$ is the odd self-adjoint map associated to $(H_F, s_F)$ as in (\ref{eq 3.3}). If there exists a morphism $\alpha:\EE\to\FF$ such that $\nabla^E=\alpha^*\nabla^F$, then the pair $(H_F, s_\alpha)$, where the map $s_\alpha:H_F\to(\pi_*E)^-$ is defined by
\begin{equation}\label{eq 5.47}
s_\alpha:=\wt{\alpha}^{-1}_-\circ s_F,
\end{equation}
satisfies the AS property for $\DD^{S\otimes E}$ and $\HH_F$ is the geometric bundle defining $\KER\big(\DD^{S\otimes E; \Delta_\alpha}\big)$, where $\Delta_\alpha$ is the odd self-adjoint map associated to $(H_F, s_\alpha)$ as in (\ref{eq 3.3}), and there exists a uniquely determined $\Z_2$-graded morphism
\begin{equation}\label{eq 5.48}
\wh{\alpha}:\KER\big(\DD^{S\otimes E; \Delta_\alpha}\big)\to\KER\big(\DD^{S\otimes F; \Delta_F}\big)
\end{equation}
such that $\wt{\eta}(\EE, \bpi, \HH_F, s_\alpha)=\wt{\eta}(\FF, \bpi, \HH_F, s_F)$.
\end{prop}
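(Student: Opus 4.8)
The plan is to show that the morphism $\alpha$ lifts to an isometric equivalence between \emph{all} of the analytic data attached to $\DD^{S\otimes E}$ and to $\DD^{S\otimes F}$, after which every assertion in the proposition becomes a formal consequence requiring no limiting arguments. Write $\wt{\alpha}\colon\pi_*E\to\pi_*F$ for the $\Z_2$-graded isometric isomorphism induced by $\id_{S(T^VX)}\otimes\alpha$, with even and odd parts $\wt{\alpha}_\pm$. Since $\nabla^E=\alpha^*\nabla^F$, and since the Clifford data, the one-form $k$ of (\ref{eq 3.1}) and the curvature $T$ of (\ref{eq 3.2}) do not involve $E$ or $F$, one checks directly that $\wt{\alpha}^*(\nabla^{\pi_*F,u})=\nabla^{\pi_*E,u}$, that $\wt{\alpha}$ commutes with $c(T)$, and that $\wt{\alpha}\circ\DD^{S\otimes E}=\DD^{S\otimes F}\circ\wt{\alpha}$; in particular $\DD^{S\otimes E}_++s_\alpha=\wt{\alpha}_-^{-1}\circ(\DD^{S\otimes F}_++s_F)\circ(\wt{\alpha}_+\oplus\id_{H_F})$, which is a composition of two isomorphisms with the surjection $\DD^{S\otimes F}_++s_F$, so $(H_F,s_\alpha)$ satisfies the AS property for $\DD^{S\otimes E}$. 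Because $\wt{\alpha}$ is isometric one also gets $s_\alpha^*=s_F^*\circ\wt{\alpha}_-$, which is exactly what is needed below.

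\textbf{The intertwining and its geometric consequences.} Next I would consider the even isometric isomorphism $A:=\wt{\alpha}\oplus\id_{\wh{H}_F}\colon\pi_*E\oplus\wh{H}_F\to\pi_*F\oplus\wh{H}_F$. Using $s_\alpha^*=s_F^*\circ\wt{\alpha}_-$ one verifies $A\circ\Delta_\alpha=\Delta_F\circ A$ from the block form (\ref{eq 3.3}), hence $A\circ\DD^{S\otimes E;\Delta_\alpha}=\DD^{S\otimes F;\Delta_F}\circ A$. Consequently $A$ carries $\ker(\DD^{S\otimes E;\Delta_\alpha})$ isomorphically onto $\ker(\DD^{S\otimes F;\Delta_F})$ and $\im(\DD^{S\otimes E;\Delta_\alpha})$ onto $\im(\DD^{S\otimes F;\Delta_F})$, so it intertwines the orthogonal decompositions (\ref{eq 3.6}), the orthogonal projections onto the kernels, and, via $A^*(\nabla^{\pi_*F,u}\oplus\nabla^{\wh{H}_F})=\nabla^{\pi_*E,u}\oplus\nabla^{\wh{H}_F}$, the metric connections on the geometric kernel bundles. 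Restricting $A$ to the kernels therefore yields a $\Z_2$-graded morphism $\wh{\alpha}\colon\KER(\DD^{S\otimes E;\Delta_\alpha})\to\KER(\DD^{S\otimes F;\Delta_F})$; this construction involves no choices, so $\wh{\alpha}$ is uniquely determined by $\alpha$. Along the way one reads off that the odd part of $\ker(\DD^{S\otimes E;\Delta_\alpha})$ is the orthogonal complement of $\im(\DD^{S\otimes E;\Delta_\alpha}_+)=(\pi_*E)^-\oplus 0$, namely $0\oplus H_F$ with inherited structure $\HH_F$, so $\HH_F$ is indeed the geometric bundle defining $\KER(\DD^{S\otimes E;\Delta_\alpha})$.

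\textbf{Equality of the eta forms.} Finally, from the previous paragraph I would verify term by term that $A^*\wt{\bbb}^{F;\Delta_F}_t=\wt{\bbb}^{E;\Delta_\alpha}_t$ for every $t$: the Dirac term transforms by $A\circ\DD^{S\otimes E;\Delta_\alpha}=\DD^{S\otimes F;\Delta_F}\circ A$, the connection term by $A^*(\nabla^{\pi_*F,u}\oplus\nabla^{\wh{H}_F})=\nabla^{\pi_*E,u}\oplus\nabla^{\wh{H}_F}$, and the $c(T)$ term since $A$ commutes with $c(T)\oplus 0$; moreover $A$ is independent of $t$. Hence $\frac{d\wt{\bbb}^{E;\Delta_\alpha}_t}{dt}e^{-\frac{1}{2\pi i}(\wt{\bbb}^{E;\Delta_\alpha}_t)^2}=A^{-1}\big(\frac{d\wt{\bbb}^{F;\Delta_F}_t}{dt}e^{-\frac{1}{2\pi i}(\wt{\bbb}^{F;\Delta_F}_t)^2}\big)A$, and since $A$ is even the supertrace is invariant under this conjugation, so the integrands defining the two eta forms via (\ref{eq 3.10}) coincide and $\wt{\eta}(\EE,\bpi,\HH_F,s_\alpha)=\wt{\eta}(\FF,\bpi,\HH_F,s_F)$.

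\textbf{Main obstacle.} In contrast with Propositions \ref{prop 5.1} and \ref{prop 5.2}, there is no analytic difficulty here — no adiabatic limits or large-time estimates are needed. The only delicate points are bookkeeping: tracking the identification $\pi_*E\oplus\wh{H}_F\leftrightarrow\pi_*F\oplus\wh{H}_F$ under $A$ on all the relevant summands, checking that the adjoint $s_\alpha^*$ produced by the \emph{isometric} isomorphism $\wt\alpha$ is precisely $s_F^*\circ\wt\alpha_-$ (so that $A$ intertwines $\Delta_\alpha$ with $\Delta_F$), and confirming that $A^*$ — and not merely some isomorphism — identifies the metric connections on the geometric kernel bundles.
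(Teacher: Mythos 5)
Your proof is correct, and it is exactly the argument the paper has in mind: the paper omits the proof, pointing instead to the corresponding MFFL statement \cite[Proposition 3.3]{H23}, and that proof runs the same intertwining argument you give, namely that $\alpha$ lifts to a $t$-independent even isometry $A=\wt\alpha\oplus\id_{\wh H_F}$ conjugating $\DD^{S\otimes E;\Delta_\alpha}$, the superconnection $\wt{\bbb}^{E;\Delta_\alpha}_t$, the orthogonal decomposition (\ref{eq 3.6}) and the projected connection into their $F$-counterparts, so that the kernel bundles are canonically identified and the eta integrands agree pointwise in $t$ by supertrace invariance. Your verifications of $s_\alpha^*=s_F^*\circ\wt\alpha_-$, of $A\circ\Delta_\alpha=\Delta_F\circ A$, and that $A$ commutes with $c(T)\oplus 0$ are the essential checks, and they are all sound.
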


Since the proof of Proposition \ref{prop 5.3} is similar to the corresponding result in the MFFL approach \cite[Proposition 3.3]{H23}, it is omitted.

By applying Propositions \ref{prop 5.2} and \ref{prop 5.3}, we obtain the following extended variational formula for the Bismut--Cheeger eta form in the ASGL approach.
\begin{prop}\label{prop 5.4}
Let $\pi:X\to B$ be a submersion with closed, oriented and spin$^c$ fibers of even dimension, equipped with Riemannian and differential spin$^c$ structures $\bpi_0$ and $\bpi_1$ with the same underlying topological spin$^c$ structures, and let $\EE$ and $\FF$ be geometric bundles over $X$. Denote by $\DD^{S\otimes E}$ and $\DD^{S\otimes F}$ the twisted spin$^c$ Dirac operators associated to $(\EE, \bpi_0)$ and $(\FF, \bpi_1)$, respectively. Let $(H_E, s_E)$ and $(H_F, s_F)$ satisfy the AS property for $\DD^{S\otimes E}$ and $\DD^{S\otimes F}$, and let $\HH_E$ and $\HH_F$ be geometric bundles defining $\KER\big(\DD^{S\otimes E; \Delta_E}\big)$ and $\KER\big(\DD^{S\otimes F; \Delta_F}\big)$, where $\Delta_E$ and $\Delta_F$ are the odd self-adjoint maps associated to $(H_E, s_E)$ and $(H_F, s_F)$ as in (\ref{eq 3.3}), respectively. If there exists a morphism $\alpha:\EE\to\FF$, then there exist geometric bundles $\VV_0$ and $\VV_1$ over $B$ and a $\Z_2$-graded morphism
$$h_{01}:\KER\big(\DD^{S\otimes E; \Delta_E}\big)\oplus\wh{\VV}_0\to\KER\big(\DD^{S\otimes F; \Delta_F}\big)\oplus\wh{\VV}_1$$
such that
\begin{displaymath}
\begin{split}
&\wt{\eta}(\FF, \bpi_1, \HH_F, s_F)-\wt{\eta}(\EE, \bpi_0, \HH_E, s_E)\\
&\equiv\int_{X/B}\big(T\wh{A}(\nabla^{T^VX}_0, \nabla^{T^VX}_1)\wedge e^{\frac{1}{2}c_1(\nabla^\lambda_0)}+\wh{A}(\nabla^{T^VX}_1)\wedge e^{\frac{1}{2}Tc_1(\nabla^\lambda_0, \nabla^\lambda_1)}\big)\wedge\ch(\nabla^E)\\
&\quad+\int_{X/B}\todd(\nabla^{T^VX}_1)\wedge\CS(\nabla^E, \alpha^*\nabla^F)-\CS\big(\nabla^{\ker(\DD^{S\otimes E; \Delta_E})}\oplus\nabla^{\wh{V}_0}, h_{01}^*(\nabla^{\ker(\DD^{S\otimes F; \Delta_F})}\oplus\nabla^{\wh{V}_1})\big).
\end{split}
\end{displaymath}
\end{prop}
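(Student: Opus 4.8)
The plan is to obtain the formula by chaining Proposition \ref{prop 5.3} and Proposition \ref{prop 5.2} through an auxiliary geometric bundle built from the morphism $\alpha$. Since $\alpha\colon(E,g^E)\to(F,g^F)$ is an isometric isomorphism, the triple $\EE':=(E,g^E,\alpha^*\nabla^F)$ is a bona fide geometric bundle over $X$ (the connection $\alpha^*\nabla^F$ is unitary with respect to $g^E=\alpha^*g^F$), it has the same underlying complex vector bundle as $\EE$, and the same map $\alpha$ becomes a morphism $\alpha\colon\EE'\to\FF$ with $\nabla^{E'}=\alpha^*\nabla^F$. I write $\DD^{S\otimes E'}$ for the twisted spin$^c$ Dirac operator associated to $(\EE',\bpi_1)$.

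First I would apply Proposition \ref{prop 5.3} to $\alpha\colon\EE'\to\FF$ with the structure $\bpi_1$ and the pair $(H_F,s_F)$. This gives the pair $(H_F,s_\alpha)$, $s_\alpha=\wt{\alpha}^{-1}_-\circ s_F$, which satisfies the AS property for $\DD^{S\otimes E'}$, identifies $\HH_F$ as the geometric bundle defining $\KER(\DD^{S\otimes E';\Delta_\alpha})$, yields a uniquely determined $\Z_2$-graded morphism $\wh\alpha\colon\KER(\DD^{S\otimes E';\Delta_\alpha})\to\KER(\DD^{S\otimes F;\Delta_F})$, and the equality $\wt\eta(\EE',\bpi_1,\HH_F,s_\alpha)=\wt\eta(\FF,\bpi_1,\HH_F,s_F)$. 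I would also note that $\wh\alpha$ is the restriction of the isometry $\wt\alpha\oplus\id_{\wh H_F}$, which intertwines $\DD^{S\otimes E';\Delta_\alpha}$ with $\DD^{S\otimes F;\Delta_F}$ as well as the associated Bismut superconnections and the orthogonal projections onto their kernels; in particular $\wh\alpha$ is connection-preserving, so $\wh\alpha^*\nabla^{\ker(\DD^{S\otimes F;\Delta_F})}=\nabla^{\ker(\DD^{S\otimes E';\Delta_\alpha})}$.

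Next I would apply Proposition \ref{prop 5.2} with $\EE_0=\EE$, $\EE_1=\EE'$, the given structures $\bpi_0,\bpi_1$, and the AS pairs $(H_0,s_0)=(H_E,s_E)$ and $(H_1,s_1)=(H_F,s_\alpha)$; this is legitimate because $\EE$ and $\EE'$ have the same underlying complex vector bundle. Proposition \ref{prop 5.2} then produces geometric bundles $\VV_0,\VV_1$ over $B$ and a $\Z_2$-graded morphism $h'_{01}\colon\KER(\DD^{S\otimes E;\Delta_E})\oplus\wh\VV_0\to\KER(\DD^{S\otimes E';\Delta_\alpha})\oplus\wh\VV_1$ satisfying its variational identity; since here $\nabla^E_0=\nabla^E$ and $\nabla^E_1=\alpha^*\nabla^F$, the two integral terms on its right-hand side are precisely the characteristic-form term and the $\int_{X/B}\todd(\nabla^{T^VX}_1)\wedge\CS(\nabla^E,\alpha^*\nabla^F)$ term in the statement, and its last summand is $-\CS\big(\nabla^{\ker(\DD^{S\otimes E;\Delta_E})}\oplus\nabla^{\wh V_0},(h'_{01})^*(\nabla^{\ker(\DD^{S\otimes E';\Delta_\alpha})}\oplus\nabla^{\wh V_1})\big)$. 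I now set $h_{01}:=(\wh\alpha\oplus\id_{\wh\VV_1})\circ h'_{01}$; using the connection-preserving property of $\wh\alpha$ from the first step, $h_{01}^*(\nabla^{\ker(\DD^{S\otimes F;\Delta_F})}\oplus\nabla^{\wh V_1})=(h'_{01})^*(\nabla^{\ker(\DD^{S\otimes E';\Delta_\alpha})}\oplus\nabla^{\wh V_1})$, so the last summand above is exactly $-\CS\big(\nabla^{\ker(\DD^{S\otimes E;\Delta_E})}\oplus\nabla^{\wh V_0},h_{01}^*(\nabla^{\ker(\DD^{S\otimes F;\Delta_F})}\oplus\nabla^{\wh V_1})\big)$. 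Finally, substituting $\wt\eta(\EE',\bpi_1,\HH_F,s_\alpha)=\wt\eta(\FF,\bpi_1,\HH_F,s_F)$ from the first step on the left-hand side turns the identity into the asserted formula.

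The only step requiring real care is the identification $\wh\alpha^*\nabla^{\ker(\DD^{S\otimes F;\Delta_F})}=\nabla^{\ker(\DD^{S\otimes E';\Delta_\alpha})}$ used to match the Chern--Simons terms under the composition $h_{01}=(\wh\alpha\oplus\id)\circ h'_{01}$; alternatively one can phrase the matching purely formally, using the cocycle and direct-sum properties \eqref{eq 2.10}, \eqref{eq 2.11}, \eqref{eq 2.14} together with \eqref{eq 2.12} to reduce the needed equivalence to the vanishing of $\CS(\nabla^{\ker(\DD^{S\otimes E';\Delta_\alpha})},\wh\alpha^*\nabla^{\ker(\DD^{S\otimes F;\Delta_F})})$ modulo exact forms. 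Either way this is precisely the content of the construction of $\wh\alpha$, exactly as in the MFFL analogue \cite[Proposition 3.3]{H23}; everything else in the argument is direct substitution and the two invocations of Propositions \ref{prop 5.2} and \ref{prop 5.3}.
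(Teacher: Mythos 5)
Your proposal is correct and is essentially the paper's own (implicit) argument: the paper derives Proposition \ref{prop 5.4} precisely by chaining Proposition \ref{prop 5.3} applied to $\alpha:(E,g^E,\alpha^*\nabla^F)\to\FF$ with Proposition \ref{prop 5.2} applied to the two connections $\nabla^E$ and $\alpha^*\nabla^F$ on the same underlying bundle, and the composition $h_{01}=(\wh{\alpha}\oplus\id_{\wh{\VV}_1})\circ h_{0\alpha}$ you construct is exactly the one recorded at (\ref{eq 6.11}). Your care about the connection-preserving property of $\wh{\alpha}$ is well placed and is indeed what the construction in \cite[Proposition 3.3]{H23} supplies.
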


The following proposition is the additivity of geometric kernel bundles and the Bismut--Cheeger eta form in the ASGL approach. Since the proof is similar to the corresponding result in the MFFL approach \cite[Proposition 4.1]{H23}, it is omitted.
\begin{prop}\label{prop 5.5}
Let $\pi:X\to B$ be a submersion with closed, oriented and spin$^c$ fibers of even dimension, equipped with a Riemannian and differential spin$^c$ structure $\bpi$, and let $\EE$ and $\FF$ be geometric bundles over $X$. Denote by $\DD^{S\otimes E}$ and $\DD^{S\otimes F}$ the twisted spin$^c$ Dirac operators associated to $(\EE, \bpi)$ and $(\FF, \bpi)$, respectively. Let $(H_E, s_E)$ and $(H_F, s_F)$ satisfy the AS property for $\DD^{S\otimes E}$ and $\DD^{S\otimes F}$, and let $\HH_E$ and $\HH_F$ be geometric bundles defining $\KER\big(\DD^{S\otimes E; \Delta_E}\big)$ and $\KER\big(\DD^{S\otimes F; \Delta_F}\big)$, where $\Delta_E$ and $\Delta_F$ are the odd self-adjoint maps associated to $(H_E, s_E)$ and $(H_F, s_F)$ as in (\ref{eq 3.3}), respectively. Then $(H_E\oplus H_F, s_E\oplus s_F)$ satisfies the AS property for $\DD^{S\otimes(E\oplus F)}$ and $\HH_E\oplus\HH_F$ is the geometric bundle defining $\KER\big(\DD^{S\otimes(E\oplus F); \Delta_{E\oplus F}}\big)$, where $\Delta_{E\oplus F}$ is the odd self-adjoint map associated to $(H_E\oplus H_F, s_E\oplus s_F)$ as in (\ref{eq 3.3}). Moreover, there exists a uniquely determined $\Z_2$-graded morphism
\begin{equation}\label{eq 5.49}
\wh{h}:\KER\big(\DD^{S\otimes(E\oplus F); \Delta_{E\oplus F}}\big)\to\KER\big(\DD^{S\otimes E; \Delta_E}\big)\oplus\KER\big(\DD^{S\otimes F; \Delta_F}\big)
\end{equation}
such that
$$\nabla^{\ker(\DD^{S\otimes(E\oplus F); \Delta_{E\oplus F}})}=\wh{h}^*\big(\nabla^{\ker(\DD^{S\otimes E; \Delta_E})}\oplus\nabla^{\ker(\DD^{S\otimes F; \Delta_F})}\big)$$
and
$$\wt{\eta}(\EE\oplus\FF, \bpi, \HH_E\oplus\HH_F, s_E\oplus s_F)\equiv\wt{\eta}(\EE, \bpi, \HH_E, s_E)+\wt{\eta}(\FF, \bpi, \HH_F, s_F).$$
\end{prop}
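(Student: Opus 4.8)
The plan is to reduce the whole statement to the elementary additivity of direct sums, exploiting that the twisted spin$^c$ Dirac operator, the infinite‑rank bundle $\pi_*$, its $L^2$‑metric, its unitary connection $\nabla^{\pi_*(\cdot),u}$, the perturbation data, and the rescaled Bismut superconnection all depend additively on the coefficient bundle. Concretely, $\DD^{S\otimes(E\oplus F)}=\DD^{S\otimes E}\oplus\DD^{S\otimes F}$ because the Clifford multiplication acts diagonally and $\nabla^{S(T^VX)\otimes(E\oplus F)}=\nabla^{S(T^VX)\otimes E}\oplus\nabla^{S(T^VX)\otimes F}$; likewise $\pi_*(E\oplus F)=\pi_*E\oplus\pi_*F$ with $g^{\pi_*(E\oplus F)}=g^{\pi_*E}\oplus g^{\pi_*F}$ and $\nabla^{\pi_*(E\oplus F),u}=\nabla^{\pi_*E,u}\oplus\nabla^{\pi_*F,u}$, since the horizontal one‑form $k$ and the curvature $T$ of the submersion (see \S\ref{s 3.1}) depend only on $\bpi$.

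The key bookkeeping step is to fix the canonical $\Z_2$‑graded isometric ``shuffle'' isomorphism
$$\sigma:\pi_*(E\oplus F)\oplus\wh{H_E\oplus H_F}\ \longrightarrow\ (\pi_*E\oplus\wh{H}_E)\oplus(\pi_*F\oplus\wh{H}_F)$$
that merely reorders the four summands, and to check, directly from (\ref{eq 3.3}), that $\sigma$ intertwines $\Delta_{E\oplus F}$ with $\Delta_E\oplus\Delta_F$, hence $\DD^{S\otimes(E\oplus F);\Delta_{E\oplus F}}=\sigma^*\big(\DD^{S\otimes E;\Delta_E}\oplus\DD^{S\otimes F;\Delta_F}\big)$, and that $\sigma$ also intertwines the relevant $\Z_2$‑graded metrics and unitary connections. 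Since $\DD^{S\otimes E;\Delta_E}_+$ and $\DD^{S\otimes F;\Delta_F}_+$ are surjective, so is their direct sum, so $(H_E\oplus H_F,s_E\oplus s_F)$ satisfies the AS property for $\DD^{S\otimes(E\oplus F)}$. Then $\ker\big(\DD^{S\otimes E;\Delta_E}\oplus\DD^{S\otimes F;\Delta_F}\big)=\ker\big(\DD^{S\otimes E;\Delta_E}\big)\oplus\ker\big(\DD^{S\otimes F;\Delta_F}\big)$, the even orthogonal projection onto it is the direct sum of the two projections, and the inherited metrics and the $P\circ(\cdot)\circ P$‑connections decompose accordingly. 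Transporting this back along $\sigma$ exhibits $\HH_E\oplus\HH_F$ as the geometric bundle defining $\KER\big(\DD^{S\otimes(E\oplus F);\Delta_{E\oplus F}}\big)$ and produces the $\Z_2$‑graded morphism $\wh{h}$ of (\ref{eq 5.49}) as the restriction of $\sigma$ to the kernel, together with the asserted connection identity; $\wh{h}$ is uniquely determined by this description.

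For the Bismut--Cheeger eta form, (\ref{eq 3.7}) and the diagonality just established give $\wt{\bbb}^{E\oplus F;\Delta_{E\oplus F}}_t=\sigma^*\big(\wt{\bbb}^{E;\Delta_E}_t\oplus\wt{\bbb}^{F;\Delta_F}_t\big)$, the term $\tfrac{c(T)}{4\sqrt{t}}$ being the same on both sides because $T$ depends only on $\bpi$. Since $\sigma$ is an even isometry, the supertrace is invariant under conjugation by $\sigma$ and additive over direct sums, so the integrand in (\ref{eq 3.10}) for $\wt{\eta}(\EE\oplus\FF,\bpi,\HH_E\oplus\HH_F,s_E\oplus s_F)$ equals the sum of those for $\wt{\eta}(\EE,\bpi,\HH_E,s_E)$ and $\wt{\eta}(\FF,\bpi,\HH_F,s_F)$; integrating in $t$ (each integral converges by the estimates of \cite[\S9.3]{BGV} applied to the corresponding summand) yields the claimed identity, in fact as an equality of forms. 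Equivalently one may run the Chern--Simons argument used repeatedly in \S\ref{s 4}: by (\ref{eq 2.11}) and (\ref{eq 2.14}), $\CS\big(\wt{\bbb}^{E\oplus F;\Delta_{E\oplus F}}_t,\wt{\bbb}^{E\oplus F;\Delta_{E\oplus F}}_T\big)\equiv\CS\big(\wt{\bbb}^{E;\Delta_E}_t,\wt{\bbb}^{E;\Delta_E}_T\big)+\CS\big(\wt{\bbb}^{F;\Delta_F}_t,\wt{\bbb}^{F;\Delta_F}_T\big)$, and one lets $t\to 0$ and $T\to\infty$.

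The main obstacle is organizational rather than conceptual: it lies entirely in pinning down $\sigma$ and verifying, summand by summand from the definitions in \S\ref{s 3.1} and \S\ref{s 3.2}, that the Dirac operator, the perturbation $\Delta_{E\oplus F}$, the $L^2$‑metric, the connection $\nabla^{\pi_*(E\oplus F),u}$, the projection onto the kernel, and the full rescaled Bismut superconnection (including the $c(T)$‑term and the $\sqrt{t}$‑rescaling) are all block diagonal with respect to $\sigma$. Once this is in place, surjectivity of a direct sum, the decomposition of kernels, and the additivity of the Chern character, Chern--Simons form and supertrace are immediate, which is exactly why the argument parallels \cite[Proposition 4.1]{H23} and is safely omitted there.
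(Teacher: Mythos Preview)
Your proposal is correct and matches what the paper intends: the paper omits the proof entirely, pointing to \cite[Proposition 4.1]{H23} for the analogous argument in the MFFL setting, and your direct additivity argument via the canonical shuffle isomorphism $\sigma$ is precisely that approach transported to the ASGL side. There is nothing to add.
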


\begin{remark}\label{remark 5.1}
Here are two remarks concerning the results in this section.
\begin{enumerate}
  \item Propositions \ref{prop 5.4} and \ref{prop 5.5} can be used to give a proof that the analytic index $\wt{\ind}^a_{\wh{K}}$ (\ref{eq 4.2}) in differential $K$-theory defined via the ASGL approach is a well defined group homomorphism.
  \item As mentioned in \S\ref{s 1.2}, by the result of Atiyah--Singer \cite[Proposition 2.2]{AS71}, the geometric bundles $\KK$ in Lemma \ref{lemma 5.1}, $\HH_0$ and $\HH_1$ in both Propositions \ref{prop 5.1} and \ref{prop 5.2}, and $\HH_F$ in Proposition \ref{prop 5.3} can always be taken as trivial geometric bundles. Thus the geometric bundles $\VV_0$ and $\VV_1$ in Proposition \ref{prop 5.2}, and hence in Proposition \ref{prop 5.4}, can also be taken as trivial geometric bundles. This fact, together with Lemma \ref{lemma 5.1}, plays an essential role when defining the analytic index $\ind^a_k$ at the cocycle level.
\end{enumerate}
\end{remark}

\section{Analytic index and RRG-type formula in odd $\Z/k\Z$ $K$-theory}\label{s 6}

This section contains the main results of this paper. We first review Karoubi's construction of the odd $\Z/k\Z$ $K$-theory group and construct a group homomorphism from the odd $\Z/k\Z$ $K$-theory group to the $\R/\Z$ $K$-theory group in \S\ref{s 6.1}. We define the analytic index $\ind^a_k$ at the cocycle level and prove the RRG-type formula in odd $\Z/k\Z$ $K$-theory in \S\ref{s 6.2}. Finally, we show that the analytic index $\ind^a_k$ in odd $\Z/k\Z$ $K$-theory refines the underlying geometric bundle of the analytic index $\ind^a_{\R/\Z}$ in $\R/\Z$ $K$-theory, and the RRG-type formula in odd $\Z/k\Z$ $K$-theory is a differential form representative of the RRG theorem in $\R/\Z$ $K$-theory in \S\ref{s 6.3}.

\subsection{Odd $\Z/k\Z$ $K$-theory}\label{s 6.1}

Since $\bun_\nabla(X)$ is a Banach category and $\varphi_{k, \nabla}:\bun_\nabla(X)\to\bun_\nabla(X)$, given by 
$$\varphi_{k, \nabla}(E)=kE,\qquad\varphi_{k, \nabla}(T)=kT,$$ 
is a quasi-surjective Banach functor, by applying \cite[Example 2.11 and Exercise 6.18 of Chapter II]{K08} to $\bun_\nabla(X)$ and $\varphi_{k, \nabla}$, we obtain the odd $\Z/k\Z$ $K$-theory group $K^{-1}(X; \Z/k\Z)$. In more detail, define a set by
$$\Gamma(\varphi_{k, \nabla})=\set{(\EE, \FF, \alpha)|\EE, \FF\in\bun_\nabla(X), \alpha\in\ho(k\EE, k\FF)}.$$
Elements in $\Gamma(\varphi_{k, \nabla})$ are called $k$-cocycles over $X$. Two $k$-cocycles $(\EE_0, \FF_0, \alpha_0)$ and $(\EE_1, \FF_1, \alpha_1)$ are isomorphic, written as $(\EE_0, \FF_0, \alpha_0)\cong(\EE_1, \FF_1, \alpha_1)$, if there exist morphisms $f_+:\EE_0\to\EE_1$ and $f_-:\FF_0\to\FF_1$ such that the diagram
\begin{center}
\begin{tikzcd}
k\EE_0 \arrow{r}{\alpha_0} \arrow{d}[swap]{kf_+} & k\FF_0 \arrow{d}{kf_-} \\ k\EE_1 \arrow{r}[swap]{\alpha_1} & k\FF_1
\end{tikzcd}
\end{center}
commutes. A $k$-cocycle $(\EE, \FF, \alpha)$ is said to be elementary if $\EE=\FF$ and $\alpha:k\EE\to k\EE$ is smoothly homotopic to $\id_{k\EE}$ through morphisms. Define a sum on $\Gamma(\varphi_{k, \nabla})$ by
$$(\EE_0, \FF_0, \alpha_0)+(\EE_1, \FF_1, \alpha_1):=(\EE_0\oplus\EE_1, \FF_0\oplus\FF_1, \alpha_0\oplus\alpha_1).$$
Note that $\Gamma(\varphi_{k, \nabla})$ is an abelian semigroup.

Define an equivalence relation $\sim$ on $\Gamma(\varphi_{k, \nabla})$ by $(\EE_0, \FF_0, \alpha_0)\sim(\EE_1, \FF_1, \alpha_1)$ if and only if there exist elementary $k$-cocycles $(\WW_0, \WW_0, \gamma_0)$ and $(\WW_1, \WW_1, \gamma_1)$ such that
$$(\EE_0, \FF_0, \alpha_0)+(\WW_0, \WW_0, \gamma_0)\cong(\EE_1, \FF_1, \alpha_1)+(\WW_1, \WW_1, \gamma_1).$$
The odd $\Z/k\Z$ $K$-theory group $K^{-1}(X; \Z/k\Z)$ is defined to be the quotient $\Gamma(\varphi_{k, \nabla})/\sim$. Denote by $[\EE, \FF, \alpha]$ the class of $(\EE, \FF, \alpha)$ in $K^{-1}(X; \Z/k\Z)$. Note that 
$$[\EE_0, \FF_0, \alpha_0]=[\EE_1, \FF_1, \alpha_1]\qquad\textrm{ in } K^{-1}(X; \Z/k\Z)$$ 
if and only if there exist elementary $k$-cocycles $(\WW_0, \WW_0, \gamma_0)$ and $(\WW_1, \WW_1, \gamma_1)$ and morphisms $f_+:\EE_0\oplus\WW_0\to\EE_1\oplus\WW_1$ and $f_-:\FF_0\oplus\WW_0\to\FF_1\oplus\WW_1$ such that the diagram
\begin{equation}\label{eq 6.1}
\begin{tikzcd}
k\EE_0\oplus k\WW_0 \arrow{r}{\alpha_0\oplus\gamma_0} \arrow{d}[swap]{kf_+} & k\FF_0\oplus k\WW_0 \arrow{d}{kf_-} \\ k\EE_1\oplus k\WW_1 \arrow{r}[swap]{\alpha_1\oplus\gamma_1} & k\FF_1\oplus k\WW_1
\end{tikzcd}
\end{equation}
commutes.

\begin{remark}\label{remark 6.1}
Temporarily denote by $K^{-1}_\nabla(X; \Z/k\Z)$ and $K^{-1}(X; \Z/k\Z)$ the odd $\Z/k\Z$ $K$-theory groups defined by $(\bun_\nabla(X), \varphi_{k, \nabla})$ and $(\bun(X), \varphi_k)$ using Karoubi's construction, respectively. Since there is an obvious forgetful functor $\Phi:\bun_\nabla(X)\to\bun(X)$ given by 
$$\Phi(\EE)=E,\qquad\Phi(T)=\wt{T},$$ 
where $T:\EE\to\FF$ is a morphism in $\bun_\nabla(X)$ and $\wt{T}:E\to F$ is the isomorphism between the underlying bundles, there is an obvious surjective group homomorphism $\wh{\Phi}:K^{-1}_\nabla(X; \Z/k\Z)\to K^{-1}(X; \Z/k\Z)$. Since metrics and connections are considered part of the data in $K^{-1}_\nabla(X; \Z/k\Z)$, for the purpose in this paper it is better to work with $K^{-1}_\nabla(X; \Z/k\Z)$ than $K^{-1}(X; \Z/k\Z)$.
\end{remark}

We associate to every $k$-cocycle $(\EE, \FF, \alpha)$ over $X$ the Cheeger--Chern--Simons form $\CCS_k(\EE, \FF, \alpha)$, defined by
$$\CCS_k(\EE, \FF, \alpha):=\frac{1}{k}\CS\big(k\nabla^E, \alpha^*(k\nabla^F)\big)\in\frac{\Omega^{\odd}(X)}{\im(d)}.$$
Since morphisms in $\bun_\nabla(X)$ are not assumed to preserve unitary connections, i.e. $k\nabla^E\neq\alpha^*(k\nabla^F)$ in general, it follows that
$$d\CCS_k(\EE, \FF, \alpha)=\frac{1}{k}\big(\ch(k\nabla^F)-\ch(k\nabla^E)\big)\neq 0.$$
Thus $\CCS_k(\EE, \FF, \alpha)$ is not a closed form.

We associate to every $k$-cocycle $(\EE, \FF, \alpha)$ over $X$ a pair
\begin{equation}\label{eq 6.2}
T(\EE, \FF, \alpha):=\big(\VV, \CCS_k(\EE, \FF, \alpha)\big),
\end{equation}
where $\VV$ is the $\Z_2$-graded geometric bundle defined by $\VV^+=\EE$ and $\VV^-=\FF$. Since $-d(\CCS_k(\EE, \FF, \alpha))=\ch(\nabla^E)-\ch(\nabla^F)=\ch(\nabla^V)$, it follows from (\ref{eq 1.2}) that the right-hand side of (\ref{eq 6.2}) is a $\Z_2$-graded generator of $K^{-1}(X; \R/\Z)$. If the $k$-cocycle $(\EE, \FF, \alpha)$ is $\Z_2$-graded, define
\begin{equation}\label{eq 6.3}
T(\EE, \FF, \alpha):=\big(\EE\oplus\FF^{\op}, \CCS_k(\EE, \FF, \alpha)\big).
\end{equation}
Similarly, the right-hand side of (\ref{eq 6.3}) is a $\Z_2$-graded generator of $K^{-1}(X; \R/\Z)$. Note that if $(\EE, \FF, \alpha)$ is a $\Z_2$-graded $k$-cocycle so that $\EE^-$ and $\FF^-$ are the zero geometric bundles and $\alpha_-$ is the zero morphism, then the right-hand side of (\ref{eq 6.3}) reduces to that of (\ref{eq 6.2}).

\begin{prop}\label{prop 6.1}
The map $T:K^{-1}(X; \Z/k\Z)\to K^{-1}(X; \R/\Z)$ defined by
$$T([\EE, \FF, \alpha]):=T(\EE, \FF, \alpha),$$
where $(\EE, \FF, \alpha)\in[\EE, \FF, \alpha]$, is a well defined group homomorphism.
\end{prop}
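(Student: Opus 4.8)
The plan is to verify that $T$ respects the equivalence relation defining $K^{-1}(X;\Z/k\Z)$ and that it is additive. Since an element of $K^{-1}(X;\Z/k\Z)$ is a class $[\EE,\FF,\alpha]$ under the relation generated by adding elementary $k$-cocycles and taking isomorphisms as in diagram (\ref{eq 6.1}), it suffices to check three things: (i) $T$ is unchanged under isomorphism of $k$-cocycles; (ii) $T$ kills the contribution of elementary $k$-cocycles, i.e.\ $T(\WW,\WW,\gamma)$ is the zero generator in $K^{-1}(X;\R/\Z)$ whenever $\gamma$ is smoothly homotopic to $\id_{k\WW}$; and (iii) $T$ is additive with respect to the semigroup operation. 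Additivity (iii) is essentially immediate: for $T(\EE_0,\FF_0,\alpha_0)+(\EE_1,\FF_1,\alpha_1)$, the underlying $\Z_2$-graded bundle is the direct sum $\VV_0\oplus\VV_1$, and the Chern--Simons form splits as $\CCS_k(\EE_0,\FF_0,\alpha_0)+\CCS_k(\EE_1,\FF_1,\alpha_1)$ by the additivity property (\ref{eq 2.11}) of $\CS$, so the sum of the two generators in $K^{-1}(X;\R/\Z)$ equals $T$ of the sum.

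For (i), suppose $(\EE_0,\FF_0,\alpha_0)\cong(\EE_1,\FF_1,\alpha_1)$ via morphisms $f_+:\EE_0\to\EE_1$ and $f_-:\FF_0\to\FF_1$ making the square with $\alpha_0,\alpha_1$ commute, so $k\alpha_1\circ(kf_+)=(kf_-)\circ\alpha_0$, i.e.\ $\alpha_1\circ(kf_+)=(kf_-)\circ\alpha_0$. Then $\VV_0\cong\VV_1$ via $f_+\oplus f_-$ (or $f_+\oplus(f_-)^{\op}$ in the $\Z_2$-graded case), and I need to show the Chern--Simons forms agree modulo $\im(d)$. Writing $\alpha_0^*(k\nabla^{F_0})$ and tracking through the commuting square together with the naturality property (\ref{eq 2.14}) of the Chern--Simons form and its additivity/composition rules (\ref{eq 2.10}), (\ref{eq 2.11}), one gets
$$\CS\big(k\nabla^{E_0},\alpha_0^*(k\nabla^{F_0})\big)\equiv\CS\big(k\nabla^{E_0},(kf_+)^*(k\nabla^{E_1})\big)+\CS\big((kf_+)^*(k\nabla^{E_1}),\alpha_0^*(k\nabla^{F_0})\big);$$
since $f_+$ is an isometric isomorphism of Hermitian bundles the first term vanishes mod $\im(d)$ by (\ref{eq 2.14}) (comparing $\nabla^{E_0}$ with the pullback connection and using that $g^{E_0}=f_+^*g^{E_1}$), and the second term is identified with $\CS(k\nabla^{E_1},\alpha_1^*(k\nabla^{F_1}))$ again by naturality applied to $f_-$ after using the commuting square. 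Dividing by $k$ gives $\CCS_k(\EE_0,\FF_0,\alpha_0)\equiv\CCS_k(\EE_1,\FF_1,\alpha_1)$, and together with $\VV_0\cong\VV_1$ this shows $T(\EE_0,\FF_0,\alpha_0)=T(\EE_1,\FF_1,\alpha_1)$ in $K^{-1}(X;\R/\Z)$.

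For (ii), if $(\WW,\WW,\gamma)$ is elementary then $\gamma$ is smoothly homotopic through morphisms to $\id_{k\WW}$; along such a homotopy $\gamma^*(k\nabla^W)$ is connected by a path of unitary connections to $(k\nabla^W)$, so $\CS\big(k\nabla^W,\gamma^*(k\nabla^W)\big)\equiv\CS\big(k\nabla^W,k\nabla^W\big)\equiv 0$ by (\ref{eq 2.12}) — more precisely, homotopy invariance of the Chern--Simons form under variation of the endpoint connection through the path of morphisms. Hence $\CCS_k(\WW,\WW,\gamma)\equiv 0$, and the underlying $\Z_2$-graded bundle $\WW\oplus\WW^{\op}$ carries $\nabla^W\oplus\nabla^W$ with $\ch(\nabla^W)-\ch(\nabla^W)=0=-d\cdot 0$, so $(\WW\oplus\WW^{\op},0)$ is the zero generator of $K^{-1}(X;\R/\Z)$ (it is of the form $(\EE\oplus\EE^{\op},0)$, which Lott's relations declare to be zero). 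Combining (i)--(iii): if $[\EE_0,\FF_0,\alpha_0]=[\EE_1,\FF_1,\alpha_1]$, then after adding elementary $k$-cocycles $(\WW_j,\WW_j,\gamma_j)$ the resulting $k$-cocycles are isomorphic; applying (iii) to add the $T$-values, (ii) to see the elementary contributions vanish, and (i) to the isomorphism yields $T(\EE_0,\FF_0,\alpha_0)=T(\EE_1,\FF_1,\alpha_1)$, so $T$ is well defined, and additivity from (iii) makes it a group homomorphism. The main obstacle is step (i): carefully bookkeeping the naturality and composition identities (\ref{eq 2.10}), (\ref{eq 2.11}), (\ref{eq 2.14}) for the Chern--Simons form in the presence of the non-connection-preserving morphisms $f_\pm$, and ensuring the $\Z_2$-graded sign conventions in (\ref{eq 2.13}) are handled consistently when $(\EE,\FF,\alpha)$ is itself $\Z_2$-graded and $T$ is defined by (\ref{eq 6.3}) rather than (\ref{eq 6.2}).
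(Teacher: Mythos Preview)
Your strategy of reducing to (i) isomorphism invariance, (ii) triviality on elementary cocycles, and (iii) additivity is reasonable, and (ii), (iii) are fine. The problem is step (i): you assert that under an isomorphism of $k$-cocycles the Cheeger--Chern--Simons forms agree modulo exact forms, and in particular that $\CS\big(k\nabla^{E_0},(kf_+)^*(k\nabla^{E_1})\big)\equiv 0$ because $f_+$ is an isometric isomorphism. This is false. Morphisms in $\bun_\nabla(X)$ are only required to preserve the Hermitian metric, not the connection, so $(kf_+)^*(k\nabla^{E_1})$ is in general a \emph{different} unitary connection from $k\nabla^{E_0}$, and the Chern--Simons form between them has no reason to vanish. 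The naturality property (\ref{eq 2.14}) says $\CS(\alpha^*\nabla_0,\alpha^*\nabla_1)\equiv\CS(\nabla_0,\nabla_1)$; it is not a vanishing statement for $\CS(\nabla_0,\alpha^*\nabla_1)$.

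In fact the paper remarks immediately after the proof that $\CCS_k(\EE_0,\FF_0,\alpha_0)-\CCS_k(\EE_1,\FF_1,\alpha_1)\not\equiv 0$ in general, so $\CCS_k$ does \emph{not} descend to a map $K^{-1}(X;\Z/k\Z)\to\Omega^{\odd}(X)/\im(d)$. What makes $T$ well defined is that the difference of the $\CCS_k$ forms is \emph{exactly} the Chern--Simons term that realizes the equivalence of generators in Lott's $K^{-1}(X;\R/\Z)$: one must prove
\[
\CCS_k(\EE_0,\FF_0,\alpha_0)-\CCS_k(\EE_1,\FF_1,\alpha_1)\equiv\CS\big(\nabla^{V_0}\oplus\nabla^{\wh{W}_0},\,f^*(\nabla^{V_1}\oplus\nabla^{\wh{W}_1})\big),
\]
with $f=f_+\oplus f_-$, and then invoke the relation (as in (\ref{eq 4.1})) defining equality in $K^{-1}(X;\R/\Z)$. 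The paper does this in one pass over the full diagram (\ref{eq 6.1}), using (\ref{eq 6.6}) to kill the $\gamma_j$ contributions and then the cocycle/naturality identities (\ref{eq 2.10}), (\ref{eq 2.11}), (\ref{eq 2.14}) together with the commutativity $(kf_-)\circ(\alpha_0\oplus\gamma_0)=(\alpha_1\oplus\gamma_1)\circ(kf_+)$ to reduce the difference to $k\,\CS(\nabla^{V_0}\oplus\nabla^{\wh{W}_0},f^*(\nabla^{V_1}\oplus\nabla^{\wh{W}_1}))$. Your decomposition can be salvaged, but only if in (i) you aim for this nonzero difference rather than for zero.
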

\begin{proof}
We prove the statement for $T$ defined for $k$-cocycles (\ref{eq 6.2}). The proof for $T$ defined for $\Z_2$-graded $k$-cocycles (\ref{eq 6.3}) is similar. We first show that $T$ is a well defined map, i.e. if $[\EE_0, \FF_0, \alpha_0]=[\EE_1, \FF_1, \alpha_1]$ in $K^{-1}(X; \Z/k\Z)$, then
\begin{equation}\label{eq 6.4}
T(\EE_0, \FF_0, \alpha_0)=T(\EE_1, \FF_1, \alpha_1)
\end{equation}
in $K^{-1}(X; \R/\Z)$.

Let $j\in\set{0, 1}$. By (\ref{eq 6.2}), write $T(\EE_j, \FF_j, \alpha_j)=\big(\VV_j, \CCS_k(\EE_j, \FF_j, \alpha_j)\big)$. 
By assumption, there exist elementary $k$-cocycles $(\WW_0, \WW_0, \gamma_0)$ and $(\WW_1, \WW_1, \gamma_1)$ and morphisms $f_+:\EE_0\oplus\WW_0\to\EE_1\oplus\WW_1$ and $f_-:\FF_0\oplus\WW_0\to\FF_1\oplus\WW_1$ such that (\ref{eq 6.1}) commutes. To prove (\ref{eq 6.4}), it suffices to show that
\begin{equation}\label{eq 6.5}
\CCS_k(\EE_0, \FF_0, \alpha_0)-\CCS_k(\EE_1, \FF_1, \alpha_1)\equiv\CS\big(\nabla^{V_0}\oplus\nabla^{\wh{W}_0}, f^*(\nabla^{V_1}\oplus\nabla^{\wh{W}_1})\big),
\end{equation}
where $f:\VV_0\oplus\wh{\WW}_0\to\VV_1\oplus\wh{\WW}_1$ is the $\Z_2$-graded morphism defined by $f=f_+\oplus f_-$.

Note that $(kW_j, \gamma_j)$ defines an element in $K^{-1}(X)$. Recall that the odd Chern character form $\ch^{\odd}(\gamma_j, k\nabla^{W_j})$ of $(kW_j, \gamma_j)$ is defined to be 
$$\ch^{\odd}(\gamma_j, k\nabla^{W_j}):=\CS\big(k\nabla^{W_j}, \gamma_j^*(k\nabla^{W_j})\big).$$ 
Since $\gamma_j$ is smoothly homotopic to $\id_{kW_j}$ through $\Aut(kW_j)$, it follows from \cite[p.491]{G93} that
\begin{equation}\label{eq 6.6}
\begin{aligned}[b]
\CS\big(k\nabla^{W_j}, \gamma_j^*(k\nabla^{W_j})\big)&=\ch^{\odd}(\gamma_j, k\nabla^{W_j})\\
&\equiv\ch^{\odd}(\id_{kW_j}, k\nabla^{W_j})\\
&\equiv 0.
\end{aligned}
\end{equation}
Since (\ref{eq 6.1}) commutes, i.e. $(kf_-)\circ(\alpha_0\oplus\gamma_0)=(\alpha_1\oplus\gamma_1)\circ(kf_+)$, it follows that $$(\alpha_0\oplus\gamma_0)^*\circ(kf_-)^*=(kf_+)^*\circ(\alpha_1\oplus\gamma_1)^*.$$ 
By (\ref{eq 6.6}), (\ref{eq 2.11}) and (\ref{eq 2.14}),
\begin{displaymath}
\begin{split}
&\CS\big(k\nabla^{E_0}, \alpha_0^*(k\nabla^{F_0})\big)-\CS\big(k\nabla^{E_1}, \alpha_1^*(k\nabla^{F_1})\big)\\
&\equiv\CS\big(k\nabla^{E_0}, \alpha_0^*(k\nabla^{F_0})\big)+\CS\big(k\nabla^{W_0}, \gamma_0^*(k\nabla^{W_0})\big)-\CS\big(k\nabla^{E_1}, \alpha_1^*(k\nabla^{F_1})\big)\\
&\qquad-\CS\big(k\nabla^{W_1}, \gamma_1^*(k\nabla^{W_1})\big)\\
&\equiv\CS\big(k\nabla^{E_0}\oplus k\nabla^{W_0}, (\alpha_0\oplus\gamma_0)^*(k\nabla^{F_0}\oplus k\nabla^{W_0})\big)\\
&\qquad-\CS\big(k\nabla^{E_1}\oplus k\nabla^{W_1}, (\alpha_1\oplus\gamma_1)^*(k\nabla^{F_1}\oplus k\nabla^{W_1})\big)\\
&\equiv\CS\big(k\nabla^{E_0}\oplus k\nabla^{W_0}, (\alpha_0\oplus\gamma_0)^*(k\nabla^{F_0}\oplus k\nabla^{W_0})\big)\\
&\qquad-\CS\big((kf_+)^*(k\nabla^{E_1}\oplus k\nabla^{W_1}), (kf_+)^*(\alpha_1\oplus\gamma_1)^*(k\nabla^{F_1}\oplus k\nabla^{W_1})\big)\\
&\equiv\CS\big(k\nabla^{E_0}\oplus k\nabla^{W_0}, (\alpha_0\oplus\gamma_0)^*(k\nabla^{F_0}\oplus k\nabla^{W_0})\big)\\
&\qquad-\CS\big((kf_+)^*(k\nabla^{E_1}\oplus k\nabla^{W_1}), (\alpha_0\oplus\gamma_0)^*(kf_-)^*(k\nabla^{F_1}\oplus k\nabla^{W_1})\big).
\end{split}
\end{displaymath}
By (\ref{eq 2.14}), (\ref{eq 2.10}), (\ref{eq 2.13}) and (\ref{eq 2.11}),
\begin{displaymath}
\begin{split}
&\CS\big(k\nabla^{E_0}\oplus k\nabla^{W_0}, (\alpha_0\oplus\gamma_0)^*(k\nabla^{F_0}\oplus k\nabla^{W_0})\big)\\
&\qquad-\CS\big((kf_+)^*(k\nabla^{E_1}\oplus k\nabla^{W_1}), (\alpha_0\oplus\gamma_0)^*(kf_-)^*(k\nabla^{F_1}\oplus k\nabla^{W_1})\big)\\
&\equiv\CS\big(k\nabla^{E_0}\oplus k\nabla^{W_0}, (kf_+)^*(k\nabla^{E_1}\oplus k\nabla^{W_1})\big)\\
&\qquad+\CS\big((kf_+)^*(k\nabla^{E_1}\oplus k\nabla^{W_1}), (\alpha_0\oplus\gamma_0)^*(k\nabla^{F_0}\oplus k\nabla^{W_0})\big)\\
&\qquad-\CS\big((kf_+)^*(k\nabla^{E_1}\oplus k\nabla^{W_1}), (\alpha_0\oplus\gamma_0)^*(k\nabla^{F_0}\oplus k\nabla^{W_0})\big)\\
&\qquad-\CS\big((\alpha_0\oplus\gamma_0)^*(k\nabla^{F_0}\oplus k\nabla^{W_0}), (\alpha_0\oplus\gamma_0)^*(kf_-)^*(k\nabla^{F_1}\oplus k\nabla^{W_1})\big)\\
&\equiv\CS\big(k\nabla^{E_0}\oplus k\nabla^{W_0}, (kf_+)^*(k\nabla^{E_1}\oplus k\nabla^{W_1})\big)\\
&\qquad-\CS\big(k\nabla^{F_0}\oplus k\nabla^{W_0}, (kf_-)^*(k\nabla^{F_1}\oplus k\nabla^{W_1})\big)\\
&\equiv\CS\big(k\nabla^{V_0}\oplus k\nabla^{\wh{W}_0}, (kf)^*(k\nabla^{V_1}\oplus k\nabla^{\wh{W}_1})\big)\\
&\equiv k\CS\big(\nabla^{V_0}\oplus\nabla^{\wh{W}_0}, f^*(\nabla^{V_1}\oplus\nabla^{\wh{W}_1})\big).
\end{split}
\end{displaymath}
By dividing both sides of the above by $k$, (\ref{eq 6.5}) holds.

Let $[\EE_0, \FF_0, \alpha_0], [\EE_1, \FF_1, \alpha_1]\in K^{-1}(X; \Z/k\Z)$. By (\ref{eq 2.11}),
\begin{displaymath}
\begin{split}
T([\EE_0, \FF_0, \alpha_0])+T([\EE_1, \FF_1, \alpha_1])&=\big(\VV_0, \CCS_k(\EE_0, \FF_0, \alpha_0)\big)+\big(\VV_1, \CCS_k(\EE_1, \FF_1, \alpha_1)\big)\\
&=\big(\VV_0\oplus\VV_1, \CCS_k(\EE_0, \FF_0, \alpha_0)+\CCS_k(\EE_1, \FF_1, \alpha_1)\big)\\
&=\big(\VV_0\oplus\VV_1, \CCS_k(\EE_0\oplus\EE_1, \FF_0\oplus\FF_1, \alpha_0\oplus\alpha_1)\big)\\
&=T([\EE_0\oplus\EE_1, \FF_0\oplus\FF_1, \alpha_0\oplus\alpha_1]).
\end{split}
\end{displaymath}
Thus $T$ is a group homomorphism.
\end{proof}

Note that if $(\EE_0, \FF_0, \alpha_0)$ and $(\EE_1, \FF_1, \alpha_1)$ are two $k$-cocycles over $X$ that are equal in $K^{-1}(X; \Z/k\Z)$, then (\ref{eq 6.5}) shows that 
$$\CCS_k(\EE_0, \FF_0, \alpha_0)-\CCS_k(\EE_1, \FF_1, \alpha_1)\not\equiv 0$$ 
in general. Thus the Cheeger--Chern--Simons form $\CCS_k$ does not descend to a well defined map $K^{-1}(X; \Z/k\Z)\to\frac{\Omega^{\odd}(X)}{\im(d)}$.

\subsection{Analytic index and RRG-type formula}\label{s 6.2}

Let $\pi:X\to B$ be a submersion with closed, oriented and spin$^c$ fibers of even dimension, equipped with a Riemannian and differential spin$^c$ structure $\bpi$. Let $(\EE, \FF, \alpha)$ be a $k$-cocycle over $X$. Denote by $\DD^{S\otimes E}$ and $\DD^{S\otimes F}$ the twisted spin$^c$ Dirac operators associated to $(\EE, \bpi)$ and $(\FF, \bpi)$, respectively. Let $(H_E, s_E)$ and $(H_F, s_F)$ satisfy the AS property for $\DD^{S\otimes E}$ and $\DD^{S\otimes F}$, and let $\HH_E$ and $\HH_F$ be geometric bundles defining $\KER\big(\DD^{S\otimes E; \Delta_E}\big)$ and $\KER\big(\DD^{S\otimes F; \Delta_F}\big)$, where $\Delta_E$ and $\Delta_F$ are odd self-adjoint maps associated to $(H_E, s_E)$ and $(H_F, s_F)$ as in (\ref{eq 3.3}), respectively.

By Proposition \ref{prop 5.5}, $(kH_E, ks_E)$ and $(kH_F, ks_F)$ satisfy the AS property for $\DD^{S\otimes kE}$ and $\DD^{S\otimes kF}$, respectively. Let
\begin{equation}\label{eq 6.7}
\begin{split}
\wh{h}_E&:\KER\big(\DD^{S\otimes kE; k\Delta_E}\big)\to k\KER\big(\DD^{S\otimes E; \Delta_E}\big),\\
\wh{h}_F&:\KER\big(\DD^{S\otimes kF; k\Delta_F}\big)\to k\KER\big(\DD^{S\otimes F; \Delta_F}\big)
\end{split}
\end{equation}
be the uniquely determined $\Z_2$-graded morphisms (\ref{eq 5.49}).

Define a geometric bundle $\EE_{k, \alpha}$ over $X$ by 
$$\EE_{k, \alpha}=(kE, kg^E, \alpha^*(k\nabla^F)).$$ 
Denote by $\DD^{S\otimes kE}_\alpha$ the twisted spin$^c$ Dirac operator associated to $(\EE_{k, \alpha}, \bpi)$. By applying Proposition \ref{prop 5.3} to $\alpha:\EE_{k\alpha}\to k\FF$, the pair $(kH_F, s_\alpha)$, where the map $s_\alpha:kH_F\to\pi_*(kE)^-$ is defined by $s_\alpha:=\wt{\alpha}^{-1}_-\circ ks_F$ (\ref{eq 5.47}), satisfies the AS property for $\DD^{S\otimes kE}_\alpha$. Let
$$\wh{\alpha}:\KER\big(\DD^{S\otimes kE; \Delta_\alpha}_\alpha\big)\to\KER\big(\DD^{S\otimes kF; k\Delta_F}\big)$$
be the uniquely determined $\Z_2$-graded morphism (\ref{eq 5.48}), where $\Delta_\alpha$ is the odd self-adjoint map associated to $(kH_F, s_\alpha)$ as in (\ref{eq 3.3}).

By applying Proposition \ref{prop 5.2} to $k\EE$ and $\EE_{k, \alpha}$, denote by
\begin{equation}\label{eq 6.8}
h_{0\alpha}:\KER\big(\DD^{S\otimes kE; k\Delta_E}\big)\oplus\wh{\VV}_0\to\KER\big(\DD^{S\otimes kE; \Delta_\alpha}_\alpha\big)\oplus\wh{\VV}_1
\end{equation}
a $\Z_2$-graded morphism, where $\VV_0$ and $\VV_1$ are geometric bundles over $B$.

By (\ref{eq 6.7}) and (\ref{eq 6.8}), define a $\Z_2$-graded morphism
\begin{equation}\label{eq 6.9}
h:k\KER\big(\DD^{S\otimes E; \Delta_E}\big)\oplus\wh{\VV}_0\to k\KER\big(\DD^{S\otimes F; \Delta_F}\big)\oplus\wh{\VV}_1
\end{equation}
by
$$h=(\wh{h}_F\oplus\id_{\wh{\VV}_1})\circ(\wh{\alpha}\oplus\id_{\wh{\VV}_1})\circ h_{0\alpha}\circ(\wh{h}_E^{-1}\oplus\id_{\wh{\VV}_0}).$$
That is, $h$ is defined to be the composition
\begin{equation}\label{eq 6.10}
\begin{tikzcd}
k\KER\big(\DD^{S\otimes E; \Delta_E}\big)\oplus\wh{\VV}_0 \arrow{d}[swap]{\wh{h}_E^{-1}\oplus\id_{\wh{\VV}_0}} \arrow{rr}{h} & & k\KER\big(\DD^{S\otimes F; \Delta_F}\big)\oplus\wh{\VV}_1 \\ \KER\big(\DD^{S\otimes kE; k\Delta_E}\big)\oplus\wh{\VV}_0 \arrow{r}[swap]{h_{0\alpha}} & \KER\big(\DD^{S\otimes kE; \Delta_\alpha}_\alpha\big)\oplus\wh{\VV}_1 \arrow{r}[swap]{\alpha\oplus\id_{\wh{\VV}_1}} & \KER\big(\DD^{S\otimes kF; k\Delta_F}\big)\oplus\wh{\VV}_1 \arrow{u}[swap]{\wh{h}_F\oplus\id_{\wh{\VV}_1}} & 
\end{tikzcd}
\end{equation}
Note that the $\Z_2$-graded morphism
\begin{equation}\label{eq 6.11}
h_{01}:\KER\big(\DD^{S\otimes kE; k\Delta_E}\big)\oplus\wh{\VV}_0\to\KER\big(\DD^{S\otimes kF; k\Delta_F}\big)\oplus\wh{\VV}_1,
\end{equation}
given by applying Proposition \ref{prop 5.4} to $\alpha:k\EE\to k\FF$, is equal to $(\wh{\alpha}\oplus\id_{\wh{\VV}_1})\circ h_{0\alpha}$.

Take $\HH_E$ and $\HH_F$ to be trivial geometric bundles. Also, when applying Proposition \ref{prop 5.2} to $k\EE$ and $\EE_{k, \alpha}$ to obtain the $\Z_2$-graded morphism $h_{0\alpha}$ (\ref{eq 6.8}), take $\bkk$ in the proof of Proposition \ref{prop 5.2} to be a trivial geometric bundle. By (2) of Remark \ref{remark 5.1}, $\VV_0=\CC^j$ and $\VV_1=\CC^\ell$ for some $j, \ell\in\N$. Thus the even and odd part of the $\Z_2$-graded morphism $h$ (\ref{eq 6.9}) are given by
\begin{equation}\label{eq 6.12}
h_\pm:k\KER\big(\DD^{S\otimes E; \Delta_E}\big)^\pm\oplus\CC^j\to k\KER\big(\DD^{S\otimes F; \Delta_F}\big)^\pm\oplus\CC^\ell.
\end{equation}
Let $a_\pm=\rk\big(\ker\big(\DD^{S\otimes E; \Delta}\big)^\pm\big)$ and $b_\pm=\rk\big(\ker\big(\DD^{S\otimes F; \Delta}\big)^\pm\big)$. By (\ref{eq 6.9}),
$$ka_++j=kb_++\ell\quad\textrm{ and }\quad ka_-+j=kb_-+\ell.$$
Thus
$$k(b_+-a_+)=j-\ell=k(b_--a_-),$$
which implies $b_+-a_+=b_--a_-$. Write $m$ for $b_+-a_+$. Without loss of generality, we assume $m\geq 0$. Then $j=km+\ell$, and therefore
\begin{equation}\label{eq 6.13}
\CC^j=k\CC^m\oplus\CC^\ell.
\end{equation}
By taking direct sum of both sides of (\ref{eq 6.12}) with the trivial geometric bundles $(k-1)\CC^\ell$, (\ref{eq 6.12}) becomes
\begin{center}
\begin{tikzcd}
k\KER\big(\DD^{S\otimes E; \Delta_E}\big)^\pm\oplus\CC^j\oplus(k-1)\CC^\ell \arrow{rr}{h_\pm\oplus\id_{(k-1)\CC^\ell}} & & k\KER\big(\DD^{S\otimes E; \Delta_E}\big)^\pm\oplus\CC^\ell\oplus(k-1)\CC^\ell.
\end{tikzcd}
\end{center}
By (\ref{eq 6.13}), $\CC^j\oplus(k-1)\CC^\ell=k\CC^m\oplus\CC^\ell\oplus(k-1)\CC^\ell=k\CC^{m+\ell}$. Thus the above morphisms become
\begin{equation}\label{eq 6.14}
\begin{tikzcd}
k\big(\KER\big(\DD^{S\otimes E; \Delta_E}\big)^\pm\oplus\CC^{m+\ell}\big) \arrow{rr}{h_\pm\oplus\id_{(k-1)\CC^\ell}} & & k\big(\KER\big(\DD^{S\otimes E; \Delta_E}\big)^\pm\oplus\CC^\ell\big).
\end{tikzcd}
\end{equation}
The case $m<0$ is similar.

By Lemma \ref{lemma 5.1},
$$(H_E\oplus\C^{m+\ell}, s_E\oplus 0)\quad\textrm{ and }\quad(H_F\oplus\C^\ell, s_F\oplus 0)$$
satisfy the AS property for $\DD^{S\otimes E}$ and $\DD^{S\otimes F}$, and $\HH_E\oplus\CC^{m+\ell}$ and $\HH_F\oplus\CC^\ell$ are trivial geometric bundles defining
\begin{equation}\label{eq 6.15}
\begin{split}
\KER\big(\DD^{S\otimes E; \wt{\Delta}_E}\big)&=\KER\big(\DD^{S\otimes E; \Delta_E}\big)\oplus\wh{\CC}^{m+\ell},\\
\KER\big(\DD^{S\otimes F; \wt{\Delta}_F}\big)&=\KER\big(\DD^{S\otimes F; \Delta_F}\big)\oplus\wh{\CC}^\ell,
\end{split}
\end{equation}
where $\wt{\Delta}_E$ and $\wt{\Delta}_F$ are odd self-adjoint maps associated to $(H_E\oplus\C^{m+\ell}, s_E\oplus 0)$ and $(H_F\oplus\C^\ell, s_F\oplus 0)$ as in (\ref{eq 3.3}), respectively. Write
\begin{equation}\label{eq 6.16}
\wt{h}:k\KER\big(\DD^{S\otimes E; \wt{\Delta}_E}\big)\to k\KER\big(\DD^{S\otimes F; \wt{\Delta}_F}\big)
\end{equation}
for the $\Z_2$-graded morphism $h\oplus\id_{(k-1)\wh{\CC}^\ell}$ given by (\ref{eq 6.14}). Thus
\begin{equation}\label{eq 6.17}
\ind^a_k(\EE, \FF, \alpha):=\big(\KER\big(\DD^{S\otimes E; \wt{\Delta}_E}\big), \KER\big(\DD^{S\otimes F; \wt{\Delta}_F}\big), \wt{h}\big)
\end{equation}
is a $\Z_2$-graded $k$-cocycle over $B$. We call $\ind^a_k(\EE, \FF, \alpha)$ defined by (\ref{eq 6.17}) the analytic index of $(\EE, \FF, \alpha)$ in odd $\Z/k\Z$ $K$-theory.

For the remainder of this paper, we adopt the following conventions, except in Remark \ref{remark 6.2}.
\begin{itemize}
  \item Adopt the notations used in defining $\ind^a_k(\EE, \FF, \alpha)$ (\ref{eq 6.17}).
  \item Whenever a pair $(H, s)$ satisfies the AS property for a twisted spin$^c$ Dirac operator, $\HH$ is always taken to be a trivial geometric bundle.
  \item Let $\EE$ be a $\Z_2$-graded geometric bundle over $X$. If there exist $k\in\N$ and a morphism $\alpha:k\EE^+\to k\EE^-$, then we take the geometric bundles $\EE$ and $\FF$ in the construction of $\ind^a_k$ (\ref{eq 6.17}) to be $\EE^+$ and $\EE^-$, respectively.
  \item Assume $m\geq 0$. The proofs in the case $m<0$ are similar.
  \item To shorten notations, for any geometric bundle $\EE$ over $X$, we write 
  		$$\bkk(\EE)=\big(\kk(\EE), g^{\kk(\EE)}, \nabla^{\kk(\EE)}\big)$$ 
		for the geometric kernel bundle $\KER\big(\DD^{S\otimes E, \Delta}\big)$ over $B$. Similarly, we write $\wt{\bkk}(\EE)$ for the geometric kernel bundle $\KER\big(\DD^{S\otimes E, \wt{\Delta}}\big)$ given by (\ref{eq 6.15}).
\end{itemize}

The following theorem is the RRG-type formula in odd $\Z/k\Z$ $K$-theory.
\begin{thm}\label{thm 6.1}
Let $\pi:X\to B$ be a submersion with closed, oriented and spin$^c$ fibers of even dimension, equipped with a Riemannian and differential spin$^c$ structure $\bpi$. Let $(\EE, \FF, \alpha)$ be a $k$-cocycle over $X$. Denote by $\DD^{S\otimes E}$ and $\DD^{S\otimes F}$ the twisted spin$^c$ Dirac operators associated to $(\EE, \bpi)$ and $(\FF, \bpi)$, respectively. Let $(H_E, s_E)$ and $(H_F, s_F)$ satisfy the AS property for $\DD^{S\otimes E}$ and $\DD^{S\otimes F}$, and let $\HH_E$ and $\HH_F$ be geometric bundles defining $\bkk(\EE)$ and $\bkk(\FF)$, respectively. Then
\begin{equation}\label{eq 6.18}
\begin{aligned}[b]
\CCS_k\big(\ind^a_k(\EE, \FF, \alpha)\big)&\equiv\int_{X/B}\todd(\nabla^{T^VX})\wedge\CCS_k(\EE, \FF, \alpha)\\
&\qquad+\wt{\eta}(\EE, \bpi, \HH_E, s_E)-\wt{\eta}(\FF, \bpi, \HH_F, s_F).
\end{aligned}
\end{equation}
\end{thm}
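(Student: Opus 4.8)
The plan is to derive Theorem~\ref{thm 6.1} from the extended variational formula for the Bismut--Cheeger eta form in the ASGL approach (Proposition~\ref{prop 5.4}) together with the additivity of geometric kernel bundles and of the eta form (Proposition~\ref{prop 5.5}), applied to the morphism $\alpha\colon k\EE\to k\FF$, and then to match the Chern--Simons term that results with $\CCS_k\big(\ind^a_k(\EE,\FF,\alpha)\big)$ by unwinding the construction of $\ind^a_k$ in \S\ref{s 6.2}. All of the analytic content is already contained in Propositions~\ref{prop 5.4} and~\ref{prop 5.5}; what remains is a bookkeeping argument at the level of differential forms.

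First I would invoke Proposition~\ref{prop 5.5} to see that $(kH_E,ks_E)$ and $(kH_F,ks_F)$ satisfy the AS property for $\DD^{S\otimes kE}$ and $\DD^{S\otimes kF}$, and then apply Proposition~\ref{prop 5.4} to $\alpha\colon k\EE\to k\FF$ with $\bpi_0=\bpi_1=\bpi$. Since the Riemannian and differential spin$^c$ structure is held fixed, the terms encoding its variation are transgression forms along a constant path and vanish modulo $\im(d)$, so the formula collapses to
\begin{displaymath}
\begin{split}
&\wt{\eta}(k\FF,\bpi,k\HH_F,ks_F)-\wt{\eta}(k\EE,\bpi,k\HH_E,ks_E)\\
&\equiv\int_{X/B}\todd(\nabla^{T^VX})\wedge\CS\big(k\nabla^E,\alpha^*(k\nabla^F)\big)-\CS\big(\nabla^{\ker(\DD^{S\otimes kE;k\Delta_E})}\oplus\nabla^{\wh{V}_0},h_{01}^*\big(\nabla^{\ker(\DD^{S\otimes kF;k\Delta_F})}\oplus\nabla^{\wh{V}_1}\big)\big),
\end{split}
\end{displaymath}
where $h_{01}$ is precisely the $\Z_2$-graded morphism $(\ref{eq 6.11})$ entering the construction of $\ind^a_k$. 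Iterating Proposition~\ref{prop 5.5} over the $k$ summands gives $\wt{\eta}(k\EE,\bpi,k\HH_E,ks_E)\equiv k\,\wt{\eta}(\EE,\bpi,\HH_E,s_E)$ and likewise for $\FF$; dividing by $k$ and using $\CS\big(k\nabla^E,\alpha^*(k\nabla^F)\big)=k\,\CCS_k(\EE,\FF,\alpha)$, the identity rearranges into
\begin{displaymath}
\begin{split}
&\int_{X/B}\todd(\nabla^{T^VX})\wedge\CCS_k(\EE,\FF,\alpha)+\wt{\eta}(\EE,\bpi,\HH_E,s_E)-\wt{\eta}(\FF,\bpi,\HH_F,s_F)\\
&\equiv\frac{1}{k}\CS\big(\nabla^{\ker(\DD^{S\otimes kE;k\Delta_E})}\oplus\nabla^{\wh{V}_0},h_{01}^*\big(\nabla^{\ker(\DD^{S\otimes kF;k\Delta_F})}\oplus\nabla^{\wh{V}_1}\big)\big).
\end{split}
\end{displaymath}

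It then remains to show that the right-hand side equals $\CCS_k\big(\ind^a_k(\EE,\FF,\alpha)\big)$, and this is the only step requiring care. Unwinding the definition of $\CCS_k$ together with $(\ref{eq 6.15})$--$(\ref{eq 6.17})$, one has $\CCS_k\big(\ind^a_k(\EE,\FF,\alpha)\big)=\frac{1}{k}\CS\big(k\nabla^{\ker(\DD^{S\otimes E;\wt{\Delta}_E})},\wt{h}^*\big(k\nabla^{\ker(\DD^{S\otimes F;\wt{\Delta}_F})}\big)\big)$ with $\wt{h}=h\oplus\id_{(k-1)\wh{\CC}^\ell}$. Using $(\ref{eq 6.15})$ and the reorganizations $k\wh{\CC}^{m+\ell}=\wh{\VV}_0\oplus(k-1)\wh{\CC}^\ell$ and $k\wh{\CC}^\ell=\wh{\VV}_1\oplus(k-1)\wh{\CC}^\ell$ (valid since $\VV_0=\CC^{j}$, $\VV_1=\CC^\ell$ and $j=km+\ell$), property $(\ref{eq 2.11})$ splits off the common trivial summand $(k-1)\wh{\CC}^\ell$, whose contribution vanishes by $(\ref{eq 2.12})$; this rewrites $\CCS_k\big(\ind^a_k(\EE,\FF,\alpha)\big)$ as $\frac{1}{k}\CS\big(k\nabla^{\ker(\DD^{S\otimes E;\Delta_E})}\oplus\nabla^{\wh{V}_0},h^*\big(k\nabla^{\ker(\DD^{S\otimes F;\Delta_F})}\oplus\nabla^{\wh{V}_1}\big)\big)$. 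Finally, since $h=(\wh{h}_F\oplus\id_{\wh{\VV}_1})\circ h_{01}\circ(\wh{h}_E^{-1}\oplus\id_{\wh{\VV}_0})$ by $(\ref{eq 6.9})$ and $(\ref{eq 6.11})$, and since Proposition~\ref{prop 5.5} applied to the $k$-fold sums gives $\wh{h}_E^*\big(k\nabla^{\ker(\DD^{S\otimes E;\Delta_E})}\big)=\nabla^{\ker(\DD^{S\otimes kE;k\Delta_E})}$ and $\wh{h}_F^*\big(k\nabla^{\ker(\DD^{S\otimes F;\Delta_F})}\big)=\nabla^{\ker(\DD^{S\otimes kF;k\Delta_F})}$, substituting and then applying the naturality $(\ref{eq 2.14})$ of the Chern--Simons form with respect to the isometric isomorphism $\wh{h}_E^{-1}\oplus\id_{\wh{\VV}_0}$ (pulling both connections back simultaneously) identifies this with $\frac{1}{k}\CS\big(\nabla^{\ker(\DD^{S\otimes kE;k\Delta_E})}\oplus\nabla^{\wh{V}_0},h_{01}^*\big(\nabla^{\ker(\DD^{S\otimes kF;k\Delta_F})}\oplus\nabla^{\wh{V}_1}\big)\big)$, which is the right-hand side of the second display. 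Comparing the two yields $(\ref{eq 6.18})$.

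The expected main obstacle is not any hard estimate but the need to keep the direct-sum decompositions, the trivial-bundle reorganizations, and the compositions $\wt{h}$, $h$, $h_{01}$, $\wh{h}_E$, $\wh{h}_F$ perfectly aligned; in particular one must check that the $h_{01}$ produced by Proposition~\ref{prop 5.4} genuinely coincides with the morphism in $(\ref{eq 6.11})$, and that the metrics match up so that the naturality of the Chern--Simons form applies to $\wh{h}_E^{-1}\oplus\id_{\wh{\VV}_0}$.
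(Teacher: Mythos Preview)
Your proposal is correct and follows essentially the same approach as the paper: apply Proposition~\ref{prop 5.4} to $\alpha:k\EE\to k\FF$ with fixed $\bpi$, use Proposition~\ref{prop 5.5} to reduce the eta forms from $k\EE,k\FF$ to $\EE,\FF$, and then identify the resulting Chern--Simons term with $\CCS_k(\ind^a_k(\EE,\FF,\alpha))$ via the naturality property (\ref{eq 2.14}) and the connection identities (\ref{eq 6.19}). The only cosmetic difference is that the paper runs the Chern--Simons identification (\ref{eq 6.22}) in the direction from $h_{01}$ to $\wt{h}$ rather than from $\wt{h}$ back to $h_{01}$, and it records the $h_{01}=(\wh{\alpha}\oplus\id_{\wh{\VV}_1})\circ h_{0\alpha}$ identification explicitly just after (\ref{eq 6.11}), which addresses the alignment concern you flag at the end.
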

\begin{proof}
By Proposition \ref{prop 5.5}, the $\Z_2$-graded morphisms $\wh{h}_E$ and $\wh{h}_F$ (\ref{eq 6.7}) satisfy
\begin{equation}\label{eq 6.19}
\nabla^{\bkk(k\EE)}=\wh{h}_E^*(k\nabla^{\bkk(\EE)}),\quad\nabla^{\bkk(k\FF)}=\wh{h}_F^*(k\nabla^{\bkk(\FF)})
\end{equation}
and
\begin{equation}\label{eq 6.20}
\begin{split}
\wt{\eta}(k\EE, \bpi, k\HH_E, ks_E)&\equiv k\wt{\eta}(\EE, \bpi, \HH_E, s_E),\\
\wt{\eta}(k\FF, \bpi, k\HH_F, ks_F)&\equiv k\wt{\eta}(\FF, \bpi, \HH_F, s_F).
\end{split}
\end{equation}
By Proposition \ref{prop 5.4}, the $\Z_2$-graded morphism $h_{01}$ (\ref{eq 6.11}) satisfies
\begin{equation}\label{eq 6.21}
\begin{aligned}[b]
\wt{\eta}(k\FF, \bpi, k\HH_F, ks_F)-\wt{\eta}(k\EE, \bpi, k\HH_E, ks_E)&\equiv\int_{X/B}\todd(\nabla^{T^VX})\wedge\CS\big(k\nabla^E, \alpha^*(k\nabla^F)\big)\\
&\qquad-\CS\big(\nabla^{\bkk(k\EE)}\oplus\wh{d}^j, h_{01}^*(\nabla^{\bkk(k\FF)}\oplus\wh{d}^\ell)\big).
\end{aligned}
\end{equation}
By (\ref{eq 6.19}), (\ref{eq 6.9}), (\ref{eq 2.14}), (\ref{eq 2.12}), (\ref{eq 2.11}), (\ref{eq 6.15}) and (\ref{eq 6.16}),
\begin{equation}\label{eq 6.22}
\begin{aligned}[b]
&-\CS\big(\nabla^{\bkk(k\EE)}\oplus\wh{d}^j, h_{01}^*(\nabla^{\bkk(k\FF)}\oplus\wh{d}^\ell)\big)\\
&\equiv-\CS\big((\wh{h}_E\oplus\id_{\wh{\C}^j})^*(k\nabla^{\kk(\EE)}\oplus\wh{d}^j), h_{01}^*(\wh{h}_F\oplus\id_{\wh{\C}^\ell})^*(k\nabla^{\kk(\FF)}\oplus\wh{d}^\ell)\big)\\
&\equiv-\CS\big(k\nabla^{\kk(\EE)}\oplus\wh{d}^j, h^*(k\nabla^{\kk(\FF)}\oplus\wh{d}^\ell)\big)\\
&\equiv-\CS\big(k\nabla^{\kk(\EE)}\oplus\wh{d}^j, h^*(k\nabla^{\kk(\FF)}\oplus\wh{d}^\ell)\big)-\CS((k-1)\wh{d}^\ell, (k-1)\wh{d}^\ell)\\
&\equiv-\CS\big(k\nabla^{\kk(\EE)}\oplus\wh{d}^j\oplus(k-1)\wh{d}^\ell, h^*(k\nabla^{\kk(\FF)}\oplus\wh{d}^\ell)\oplus(k-1)\wh{d}^\ell)\big)\\
&\equiv-\CS\big(k\nabla^{\wt{\kk}(\EE)}, \wt{h}^*(k\nabla^{\wt{\kk}(\FF)})\big).
\end{aligned}
\end{equation}
By (\ref{eq 6.20}) and (\ref{eq 6.22}), (\ref{eq 6.21}) becomes
\begin{equation}\label{eq 6.23}
\begin{aligned}[b]
k\wt{\eta}(\FF, \bpi, \HH_F, s_F)-k\wt{\eta}(\EE, \bpi, \HH_E, s_E)&\equiv\int_{X/B}\todd(\nabla^{T^VX})\wedge\CS\big(k\nabla^E, \alpha^*(k\nabla^F)\big)\\
&\qquad-\CS\big(k\nabla^{\wt{\kk}(\EE)}, \wt{h}^*(k\nabla^{\wt{\kk}(\FF)})\big).
\end{aligned}
\end{equation}
Then (\ref{eq 6.18}) follows from dividing both sides of (\ref{eq 6.23}) by $k$.
\end{proof}

\subsection{Connections to $\R/\Z$ $K$-theory}\label{s 6.3}

\begin{coro}\label{coro 6.1}
Let $\pi:X\to B$ be a submersion with closed, oriented and spin$^c$ fibers of even dimension, equipped with a Riemannian and differential spin$^c$ structure $\bpi$, and let $\E=(\EE, \omega)$ be a $\Z_2$-graded generator of $K^{-1}(X; \R/\Z)$. Denote by $\DD^{S\otimes E^\pm}$ the twisted spin$^c$ Dirac operator associated to $(\EE^\pm, \bpi)$. Let $(H_\pm, s_\pm)$ satisfy the AS property for $\DD^{S\otimes E^\pm}$, and let $\HH_\pm$ be a geometric bundle defining $\bkk(\EE^\pm)$. Then the analytic index $\ind^a_{\R/\Z}(\E)$ in $\R/\Z$ $K$-theory is equal to
\begin{equation}\label{eq 6.24}
\begin{aligned}[b]
\ind^a_{\R/\Z}(\E)&=\bigg(\bkk(\EE^+)\oplus\bkk(\EE^-)^{\op}, \int_{X/B}\todd(\nabla^{T^VX})\wedge\omega\\
&\qquad+\wt{\eta}(\EE^+, \bpi, \HH_+, s_+)-\wt{\eta}(\EE^-, \bpi, \HH_-, s_-)\bigg).
\end{aligned}
\end{equation}
\end{coro}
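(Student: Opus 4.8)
The plan is to identify the right-hand side of (\ref{eq 6.24}) with the value of the ASGL analytic index $\wt{\ind}^a_{\wh{K}}$ in differential $K$-theory on $\E$, and then to invoke Proposition \ref{prop 4.1}. So the proof reduces the $\R/\Z$ statement to a computation in $\wh{K}$, where the relevant formula is the defining one (\ref{eq 4.2}).

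First I would use the defining relation (\ref{eq 1.2}): it says precisely that $\E=(\EE,\omega)$, viewed as a generator of $\wh{K}(X)$, has vanishing curvature, and that on such (flat) classes $\ind^a_{\R/\Z}$ coincides with $\ind^a_{\wh{K}}$ (this is the comparison of \cite[Definition 14]{L94} with \cite[\S7]{FL10}; see also \cite{H23}). Hence $\ind^a_{\R/\Z}(\E)=\ind^a_{\wh{K}}(\E)$, and by Proposition \ref{prop 4.1} this equals $\wt{\ind}^a_{\wh{K}}(\E)$. It therefore suffices to evaluate $\wt{\ind}^a_{\wh{K}}(\E)$ and match it with the right-hand side of (\ref{eq 6.24}).

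Second I would compute $\wt{\ind}^a_{\wh{K}}(\E)$ from the decomposition $\E=(\EE^+,0)-(\EE^-,0)+(\un{0},\omega)$ in $\wh{K}(X)$, where $(\EE^\pm,0)$ denotes the geometric bundle $\EE^\pm$ placed in the even slot and $\un{0}$ is the zero geometric bundle. Since $\wt{\ind}^a_{\wh{K}}$ is a group homomorphism ((1) of Remark \ref{remark 5.1}, or by Proposition \ref{prop 4.1}), I evaluate it on each summand. By (\ref{eq 4.2}) with $\omega=0$ one gets $\wt{\ind}^a_{\wh{K}}(\EE^\pm,0)=\big(\bkk(\EE^\pm),\wt{\eta}(\EE^\pm,\bpi,\HH_\pm,s_\pm)\big)$, so the $\EE^-$-term contributes $-\wt{\ind}^a_{\wh{K}}(\EE^-,0)=\big(\bkk(\EE^-)^{\op},-\wt{\eta}(\EE^-,\bpi,\HH_-,s_-)\big)$; and for the zero bundle the twisted spin$^c$ Dirac operator vanishes, so its geometric kernel bundle and Bismut--Cheeger eta form are zero and $\wt{\ind}^a_{\wh{K}}(\un{0},\omega)=\big(\un{0},\int_{X/B}\todd(\nabla^{T^VX})\wedge\omega\big)$. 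Adding the three contributions gives exactly the pair on the right-hand side of (\ref{eq 6.24}).

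The only real content beyond routine bookkeeping is the flat-restriction compatibility $\ind^a_{\R/\Z}=\ind^a_{\wh{K}}|_{K^{-1}(X;\R/\Z)}$; everything else amounts to keeping track of the $\Z_2$-grading conventions, in particular that the chosen decomposition of $\E$ has underlying index class $\ind^a([E^+])-\ind^a([E^-])$ and that the opposite grading on $\bkk(\EE^-)$ is the one appearing in $\ind^a_{\R/\Z}$ for $\Z_2$-graded generators. No auxiliary choices need to be tracked, since the resulting expression equals the well-defined class $\ind^a_{\R/\Z}(\E)$; in particular its independence of $(H_\pm,s_\pm)$ and $\HH_\pm$ is automatic. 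Alternatively, one can bypass differential $K$-theory and argue directly from the MFFL description of $\ind^a_{\R/\Z}$, using Lemma \ref{lemma 4.1} to replace each $\wh{\eta}\big(\EE^\pm,\bpi,\KER(\DD^{S\otimes E^\pm;\Delta_\pm})\big)$ by $\wt{\eta}(\EE^\pm,\bpi,\HH_\pm,s_\pm)$.
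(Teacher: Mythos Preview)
Your proposal is correct and follows essentially the same route as the paper: reduce to differential $K$-theory via the compatibility of $\ind^a_{\R/\Z}$ with $\ind^a_{\wh{K}}$ on flat classes, invoke Proposition~\ref{prop 4.1} to pass to $\wt{\ind}^a_{\wh{K}}$, and then evaluate termwise using (\ref{eq 4.2}). The only cosmetic differences are that the paper splits $\E$ as $(\EE^+,\omega)-(\EE^-,0)$ rather than your three-term decomposition, and it cites the specific formulas \cite[(4.34) and Theorem 4.5]{H23} for the MFFL expression of $\ind^a_{\R/\Z}(\E)$ in place of the abstract flat-restriction principle you invoke.
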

\begin{proof}
First note that the right-hand side of (\ref{eq 6.24}) is a $\Z_2$-graded generator of $K^{-1}(B; \R/\Z)$.

Let $\LL_\pm$ be a $\Z_2$-graded geometric bundle over $B$ so that $L_\pm\to B$ satisfies the MF property for $\DD^{S\otimes E^\pm}$. By \cite[(4.34) and Theorem 4.5]{H23},
\begin{equation}\label{eq 6.25}
\begin{aligned}[b]
\ind^a_{\R/\Z}(\E)&=\bigg(\LL_+\oplus\LL_-^{\op}, \int_{X/B}\todd(\nabla^{T^VX})\wedge\omega+\wh{\eta}(\EE, \bpi, \LL_+\oplus\LL_-^{\op})\bigg)\\
&=\bigg(\LL_+\oplus\LL_-^{\op}, \int_{X/B}\todd(\nabla^{T^VX})\wedge\omega+\wh{\eta}(\EE^+, \bpi, \LL_+)-\wh{\eta}(\EE^-, \bpi, \LL_-)\bigg)\\
&=\bigg(\LL_+, \int_{X/B}\todd(\nabla^{T^VX})\wedge\omega+\wh{\eta}(\EE^+, \bpi, \LL_+)\bigg)-\bigg(\LL_-, \wh{\eta}(\EE^-, \bpi, \LL_-)\bigg).
\end{aligned}
\end{equation}
Define generators $\E^+$ and $\E^-_0$ of $\wh{K}(X)$ by $\E^+=(\EE^+, \omega)$ and $\E^-_0=(\EE^-, 0)$, respectively. By lifting (\ref{eq 6.25}) to $\wh{K}(B)$ and making use of Proposition \ref{prop 4.1}, (\ref{eq 6.25}) becomes
\begin{displaymath}
\begin{split}
&\ind^a_{\R/\Z}(\E)\\
&=\bigg(\LL_+, \int_{X/B}\todd(\nabla^{T^VX})\wedge\omega+\wh{\eta}(\EE^+, \bpi, \LL_+)\bigg)-\bigg(\LL_-, \wh{\eta}(\EE^-, \bpi, \LL_-)\bigg)\\
&=\ind^a_{\wh{K}}(\E^+)-\ind^a_{\wh{K}}(\E^-_0)\\
&=\wt{\ind}^a_{\wh{K}}(\E^+)-\wt{\ind}^a_{\wh{K}}(\E^-_0)\\
&=\bigg(\bkk(\EE^+), \int_{X/B}\todd(\nabla^{T^VX})\wedge\omega+\wt{\eta}(\EE^+, \bpi, \HH_+, s_+)\bigg)-\bigg(\bkk(\EE^-), \wt{\eta}(\EE^-, \bpi, \HH_-, s_-)\bigg)\\
&=\bigg(\bkk(\EE^+)\oplus\bkk(\EE^-)^{\op}, \int_{X/B}\todd(\nabla^{T^VX})\wedge\omega+\wt{\eta}(\EE^+, \bpi, \HH_+, s_+)-\wt{\eta}(\EE^-, \bpi, \HH_-, s_-)\bigg).
\end{split}
\end{displaymath}
Thus (\ref{eq 6.24}) holds.
\end{proof}

Recall that the $\R/\Q$ Chern character $\ch_{\R/\Q}:K^{-1}(X; \R/\Z)\to H^{\odd}(X; \R/\Q)$ \cite[Proposition 1]{L94} is defined as follows. Given a $\Z_2$-graded generator $\E=(\EE, \omega)$ of $K^{-1}(X; \R/\Z)$, let $k\in\N$ satisfy $kE^+\cong kE^-$ and $\alpha:k\EE^+\to k\EE^-$ a morphism. Define $\ch_{\R/\Q}(\E)$ to be the image of the differential form
\begin{equation}\label{eq 6.26}
\frac{1}{k}\CS\big(\alpha^*(k\nabla^{E, -}), k\nabla^{E, +}\big)+\omega
\end{equation}
under the group homomorphism $H^{\odd}(X; \R)\to H^{\odd}(X; \R/\Q)$ that is associated to the short exact sequence $0\to\Q\to\R\to\R/\Q\to 0$. Note that $\ch_{\R/\Q}(\E)$ is independent of the choices made \cite[Lemma 1]{L94} (see also \cite[Remark 3]{H20}).

The following proposition says the RRG-type formula in odd $\Z/k\Z$ $K$-theory (Theorem \ref{thm 6.1}) is a refinement of the RRG theorem in $\R/\Z$ $K$-theory (\ref{eq 1.3}) at the differential form level.
\begin{prop}\label{prop 6.2}
Let $\pi:X\to B$ be a submersion with closed, oriented and spin$^c$ fibers of even dimension, equipped with a Riemannian and differential spin$^c$ structure $\bpi$, and let $\E=(\EE, \omega)$ be a $\Z_2$-graded generator of $K^{-1}(X; \R/\Z)$. Fix a $k\in\N$ satisfying $k\EE^+\cong k\EE^-$ and a morphism $\alpha:k\EE^+\to k\EE^-$. The RRG-type formula in odd $\Z/k\Z$ $K$-theory (Theorem \ref{thm 6.1}) for the $k$-cocycle $(\EE^+, \EE^-, \alpha)$, given by
\begin{equation}\label{eq 6.27}
\begin{split}
&\CCS_k\big(\ind^a_k(\EE^+, \EE^-, \alpha)\big)+\wt{\eta}(\EE^+, \bpi, \HH_+, s_+)-\wt{\eta}(\EE^-, \bpi, \HH_-, s_-)\\
&\qquad-\int_{X/B}\todd(\nabla^{T^VX})\wedge\CCS_k(\EE^+, \EE^-, \alpha)\equiv 0,
\end{split}
\end{equation}
is a refinement of the RRG theorem in $\R/\Z$ $K$-theory for $\E$, given by
\begin{equation}\label{eq 6.28}
\ch_{\R/\Q}(\ind^a_{\R/\Z}(\E))-\int_{X/B}\todd(T^VX)\cup\ch_{\R/\Q}(\E)=0
\end{equation}
in $H^{\odd}(B; \R/\Q)$, at the differential form level, i.e. the left-hand side of (\ref{eq 6.27}) is a differential form representative of that of (\ref{eq 6.28}).
\end{prop}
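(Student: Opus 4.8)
The plan is to produce explicit differential-form representatives of both sides of \textup{(\ref{eq 6.28})} using the recipe \textup{(\ref{eq 6.26})} for $\ch_{\R/\Q}$, Corollary \ref{coro 6.1}, and Theorem \ref{thm 6.1}, and to check that their difference coincides, up to an overall sign and modulo $\im(d)$, with the left-hand side of \textup{(\ref{eq 6.27})}; since that form is exact by Theorem \ref{thm 6.1}, its class in $H^{\odd}(B;\R/\Q)$ vanishes, which is exactly \textup{(\ref{eq 6.28})}.

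First I would compute a representative of $\ch_{\R/\Q}(\E)$. Applying \textup{(\ref{eq 6.26})} with the given $k$ and $\alpha:k\EE^+\to k\EE^-$, and using \textup{(\ref{eq 2.9})} together with the definition of $\CCS_k$, a representative is $-\CCS_k(\EE^+,\EE^-,\alpha)+\omega$ modulo $\im(d)$, and one checks this form is closed. Since $\todd(\nabla^{T^VX})$ is a closed form whose de Rham class lies in the image of $H^\bullet(X;\Q)$, and the Gysin map $\int_{X/B}$ on $H^\bullet(-;\R/\Q)$ is computed on such representatives by integration along the fibre, $\int_{X/B}\todd(T^VX)\cup\ch_{\R/\Q}(\E)$ is represented by $\int_{X/B}\todd(\nabla^{T^VX})\wedge\big(-\CCS_k(\EE^+,\EE^-,\alpha)+\omega\big)$.

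Next I would compute a representative of $\ch_{\R/\Q}(\ind^a_{\R/\Z}(\E))$. By Corollary \ref{coro 6.1}, together with Lemma \ref{lemma 5.1} (specifically \textup{(\ref{eq 5.2})}) and the fact that adjoining the elementary $\Z_2$-graded trivial bundles appearing in \textup{(\ref{eq 6.15})} alters neither the class in $K^{-1}(B;\R/\Z)$ nor the Bismut--Cheeger eta forms, $\ind^a_{\R/\Z}(\E)$ is represented by the generator $\big(\wt{\bkk}(\EE^+)\oplus\wt{\bkk}(\EE^-)^{\op},\ \int_{X/B}\todd(\nabla^{T^VX})\wedge\omega+\wt{\eta}(\EE^+,\bpi,\HH_+,s_+)-\wt{\eta}(\EE^-,\bpi,\HH_-,s_-)\big)$. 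I would then feed this generator into \textup{(\ref{eq 6.26})} using the \emph{same} integer $k$ and the isomorphism between its even and odd parts built from the $\Z_2$-graded morphism $\wt{h}:k\wt{\bkk}(\EE^+)\to k\wt{\bkk}(\EE^-)$ produced in \S\ref{s 6.2} during the construction of $\ind^a_k(\EE^+,\EE^-,\alpha)$, namely $\wt{h}_+\oplus(\wt{h}_-)^{-1}$. Using \textup{(\ref{eq 2.9})}, \textup{(\ref{eq 2.11})}, \textup{(\ref{eq 2.13})} and \textup{(\ref{eq 2.14})}, the Chern--Simons term in \textup{(\ref{eq 6.26})} collapses to $-\CCS_k(\ind^a_k(\EE^+,\EE^-,\alpha))$, so a representative of $\ch_{\R/\Q}(\ind^a_{\R/\Z}(\E))$ is $-\CCS_k(\ind^a_k(\EE^+,\EE^-,\alpha))+\int_{X/B}\todd(\nabla^{T^VX})\wedge\omega+\wt{\eta}(\EE^+,\bpi,\HH_+,s_+)-\wt{\eta}(\EE^-,\bpi,\HH_-,s_-)$; here I would invoke the independence of $\ch_{\R/\Q}$ from the choices \cite[Lemma 1]{L94} and of $\ind^a_{\R/\Z}$ from the choice of kernel-type data to justify using these particular representatives.

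Subtracting, the left-hand side of \textup{(\ref{eq 6.28})} is represented by
\[
-\CCS_k\big(\ind^a_k(\EE^+,\EE^-,\alpha)\big)+\wt{\eta}(\EE^+,\bpi,\HH_+,s_+)-\wt{\eta}(\EE^-,\bpi,\HH_-,s_-)+\int_{X/B}\todd(\nabla^{T^VX})\wedge\CCS_k(\EE^+,\EE^-,\alpha),
\]
which is, up to an overall sign, the left-hand side of \textup{(\ref{eq 6.27})}; exactness of the latter (Theorem \ref{thm 6.1}) then forces the corresponding class in $H^{\odd}(B;\R/\Q)$ to vanish, recovering \textup{(\ref{eq 6.28})}. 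I expect the main obstacle to be the bookkeeping in the third step: matching the Chern--Simons term of \textup{(\ref{eq 6.26})} for the $\Z_2$-graded generator $\ind^a_{\R/\Z}(\E)$ — whose $\EE^-$-factor carries the opposite grading, forcing the inverse $(\wt{h}_-)^{-1}$ into the splitting used in \textup{(\ref{eq 6.26})} — with $-\CCS_k(\ind^a_k(\EE^+,\EE^-,\alpha))$, while simultaneously confirming that the integer $k$ and the morphism $\alpha$ fixed in the hypothesis are precisely the data entering the construction of $\ind^a_k$, so that $\wt{h}$ is an admissible choice in \textup{(\ref{eq 6.26})}. A secondary point is the compatibility of fibre integration with the $\Q\hookrightarrow\R\to\R/\Q$ comparison maps underlying $\ch_{\R/\Q}$ and the pushforward, which is what converts the differential-form identity into the identity in $H^{\odd}(B;\R/\Q)$.
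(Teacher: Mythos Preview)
Your proposal is correct and follows essentially the same route as the paper: you compute differential-form representatives of $\ch_{\R/\Q}(\E)$ and $\ch_{\R/\Q}(\ind^a_{\R/\Z}(\E))$ via \textup{(\ref{eq 6.26})}, using Corollary~\ref{coro 6.1} together with the trivial-bundle stabilization \textup{(\ref{eq 6.15})} to present $\ind^a_{\R/\Z}(\E)$ with underlying bundle $\wt{\bkk}(\EE^+)\oplus\wt{\bkk}(\EE^-)^{\op}$ and the morphism $\wt{h}_+\oplus\wt{h}_-^{-1}$, then identify the resulting Chern--Simons term with $-\CCS_k(\ind^a_k(\EE^+,\EE^-,\alpha))$ --- this is exactly the computation \textup{(\ref{eq 6.32})} in the paper. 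The only difference is organizational: the paper starts from \textup{(\ref{eq 6.27})} and manipulates it until the two representatives emerge, whereas you compute each representative first and then subtract; the content is the same.
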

\begin{proof}
Denote by $\DD^{S\otimes E^\pm}$ the twisted spin$^c$ Dirac operator associated to $(\EE^\pm, \bpi)$. Let $(H_\pm, s_\pm)$ satisfy the AS property for $\DD^{S\otimes E^\pm}$ and $\HH_\pm$ a geometric bundle defining $\bkk(\EE^\pm)$. Consider the $k$-cocycle $(\EE^+, \EE^-, \alpha)$ over $X$. By (\ref{eq 6.15}),
\begin{equation}\label{eq 6.29}
\begin{split}
\wt{\bkk}(\EE^+)&=\bkk(\EE^+)\oplus\wh{\CC}^{m+\ell},\\
\wt{\bkk}(\EE^+)&=\bkk(\EE^-)\oplus\wh{\CC}^\ell
\end{split}
\end{equation}
for some $\ell\in\N$. Let $\wt{h}:k\wt{\bkk}(\EE^+)\to k\wt{\bkk}(\EE^-)$ be the $\Z_2$-graded morphism given by (\ref{eq 6.16}). Since $(\wh{\CC}^{m+\ell}, 0)$ and $(\wh{\CC}^\ell, 0)$ represent the zero element in $K^{-1}(B; \R/\Z)$, it follows from (\ref{eq 6.24}) and (\ref{eq 6.29}) that
\begin{equation}\label{eq 6.30}
\begin{aligned}[b]
\ind^a_{\R/\Z}(\E)&=\bigg(\bkk(\EE^+)\oplus\bkk(\EE^-)^{\op}, \int_{X/B}\todd(\nabla^{T^VX})\wedge\omega+\wt{\eta}(\EE^+, \bpi, \HH_+, s_+)\\
&\qquad-\wt{\eta}(\EE^-, \bpi, \HH_-, s_-)\bigg)+(\wh{\CC}^{m+\ell}, 0)+(\wh{\CC}^\ell, 0)\\
&=\bigg(\wt{\bkk}(\EE^+)\oplus\wt{\bkk}(\EE^-)^{\op}, \int_{X/B}\todd(\nabla^{T^VX})\wedge\omega+\wt{\eta}(\EE^+, \bpi, \HH_+, s_+)\\
&\qquad-\wt{\eta}(\EE^-, \bpi, \HH_-, s_-)\bigg).
\end{aligned}
\end{equation}
The RRG-type formula in odd $\Z/k\Z$ $K$-theory (Theorem \ref{thm 6.1}) for $(\EE^+, \EE^-, \alpha)$ is given by
\begin{equation}\label{eq 6.31}
\begin{aligned}[b]
&-\CCS_k\big(\ind^a_k(\EE^+, \EE^-, \alpha)\big)+\wt{\eta}(\EE^+, \bpi, \HH_+, s_+)-\wt{\eta}(\EE^-, \bpi, \HH_-, s_-)\\
&\qquad+\int_{X/B}\todd(\nabla^{T^VX})\wedge\CCS_k(\EE^+, \EE^-, \alpha)\equiv 0.
\end{aligned}
\end{equation}
Define a morphism $\wh{h}:k\wt{\bkk}(\EE^+)^+\oplus k\wt{\bkk}(\EE^-)^-\to k\wt{\bkk}(\EE^+)^-\oplus k\wt{\bkk}(\EE^-)^+$ by $\wh{h}=\wt{h}_+\oplus\wt{h}_-^{-1}$. By (\ref{eq 2.13}), (\ref{eq 2.9}) and (\ref{eq 2.11}),
\begin{equation}\label{eq 6.32}
\begin{aligned}[b]
&k\CCS_k\big(\ind^a_k(\EE^+, \EE^-, \alpha)\big)\\
&\equiv\CS\big(k\nabla^{\wt{\kk}(\EE^+)}, \wt{h}^*(k\nabla^{\wt{\kk}(\EE^-)})\big)\\
&\equiv\CS\big(k\nabla^{\wt{\kk}(\EE^+), +}, \wt{h}_+^*(k\nabla^{\wt{\kk}(\EE^-), +})\big)-\CS\big(k\nabla^{\wt{\kk}(\EE^+), -}, \wt{h}_-^*(k\nabla^{\wt{\kk}(\EE^-), -})\big)\\
&\equiv-\CS\big(\wt{h}_+^*(k\nabla^{\wt{\kk}(\EE^-), +}), k\nabla^{\wt{\kk}(\EE^+), +}\big)-\CS\big(k\nabla^{\wt{\kk}(\EE^+), -}, \wt{h}_-^*(k\nabla^{\wt{\kk}(\EE^-), -})\big)\\
&\equiv-\CS\big(\wt{h}_+^*(k\nabla^{\wt{\kk}(\EE^-), +}), k\nabla^{\wt{\kk}(\EE^+), +}\big)-\CS\big((\wt{h}_-^{-1})^*(k\nabla^{\wt{\kk}(\EE^+), -}), k\nabla^{\wt{\kk}(\EE^-), -}\big)\\
&\equiv-\CS\big(\wh{h}^*(k\nabla^{\wt{\kk}(\EE^-), +}\oplus k\nabla^{\wt{\kk}(\EE^+), -}), k\nabla^{\wt{\kk}(\EE^+), +}\oplus k\nabla^{\wt{\kk}(\EE^-), -}\big).
\end{aligned}
\end{equation}
By (\ref{eq 2.9}) and (\ref{eq 6.32}), (\ref{eq 6.31}) is equivalent to
\begin{equation}\label{eq 6.33}
\begin{aligned}[b]
&\frac{1}{k}\CS\big(\wh{h}^*(k\nabla^{\wt{\kk}(\EE^-), +}\oplus k\nabla^{\wt{\kk}(\EE^+), -}), k\nabla^{\wt{\kk}(\EE^+), +}\oplus k\nabla^{\wt{\kk}(\EE^-), -}\big)+\wt{\eta}(\EE^+, \bpi, \HH_+, s_+)\\
&\qquad-\wt{\eta}(\EE^-, \bpi, \HH_-, s_-)-\int_{X/B}\todd(\nabla^{T^VX})\wedge\bigg(\frac{1}{k}\CS\big(\alpha^*(k\nabla^{E, -}), k\nabla^{E, +}\big)\bigg)\equiv 0.
\end{aligned}
\end{equation}
By adding and subtracting $\int_{X/B}\todd(\nabla^{T^VX})\wedge\omega$ to the left-hand side of (\ref{eq 6.33}), it is equivalent to
\begin{displaymath}
\begin{split}
&\frac{1}{k}\CS\big(\wh{h}^*(k\nabla^{\wt{\kk}(\EE^-), +}\oplus k\nabla^{\wt{\kk}(\EE^+), -}), k\nabla^{\wt{\kk}(\EE^+), +}\oplus k\nabla^{\wt{\kk}(\EE^-), -}\big)\\
&\quad+\int_{X/B}\todd(\nabla^{T^VX})\wedge\omega+\wt{\eta}(\EE^+, \bpi, \HH_+, s_+)-\wt{\eta}(\EE^-, \bpi, \HH_-, s_-)\\
&\qquad-\int_{X/B}\todd(\nabla^{T^VX})\wedge\bigg(\frac{1}{k}\CS\big(\alpha^*(k\nabla^{E, -}), k\nabla^{E, +}\big)+\omega\bigg)\equiv 0.
\end{split}
\end{displaymath}
By (\ref{eq 6.26}) and (\ref{eq 6.30}),
\begin{displaymath}
\begin{split}
&\frac{1}{k}\CS\big(\wh{h}^*(k\nabla^{\wt{\kk}(\EE^-), +}\oplus k\nabla^{\wt{\kk}(\EE^+), -}), k\nabla^{\wt{\kk}(\EE^+), +}\oplus k\nabla^{\wt{\kk}(\EE^-), -}\big)\\
&\quad+\int_{X/B}\todd(\nabla^{T^VX})\wedge\omega+\wt{\eta}(\EE^+, \bpi, \HH_+, s_+)-\wt{\eta}(\EE^-, \bpi, \HH_-, s_-)
\end{split}
\end{displaymath}
is a differential form representative of $\ch_{\R/\Q}(\ind^a_{\R/\Z}(\E))$. Similarly,
$$\int_{X/B}\todd(\nabla^{T^VX})\wedge\bigg(\frac{1}{k}\CS\big(\alpha^*(k\nabla^{E, -}), k\nabla^{E, +}\big)+\omega\bigg)$$
is a differential form representative of $\int_{X/B}\todd(T^VX)\cup\ch_{\R/\Q}(\E)$. Thus the left-hand side of (\ref{eq 6.33}), and hence the left-hand side of (\ref{eq 6.31}), is a differential form representative of that of (\ref{eq 6.28}).
\end{proof}

It is instructive to compare (\ref{eq 6.33}) with \cite[(4.40)]{H23}.

The following proposition says the analytic index $\ind^a_k$ in odd $\Z/k\Z$ $K$-theory (\ref{eq 6.17}) refines the underlying geometric bundle of the analytic index $\ind^a_{\R/\Z}$ in $\R/\Z$ $K$-theory.
\begin{prop}\label{prop 6.3}
Let $\pi:X\to B$ be a submersion with closed, oriented and spin$^c$ fibers of even dimension, equipped with a Riemannian and differential spin$^c$ structure $\bpi$. For any $k$-cocycle $(\EE, \FF, \alpha)$ over $X$,
\begin{equation}\label{eq 6.34}
T\big(\ind^a_k(\EE, \FF, \alpha)\big)=\ind^a_{\R/\Z}\big(T(\EE, \FF, \alpha)\big),
\end{equation}
where $T:K^{-1}(B; \Z/k\Z)\to K^{-1}(B; \R/\Z)$ is the group homomorphism given by Proposition \ref{prop 6.1}.
\end{prop}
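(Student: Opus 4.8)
The plan is to compute both sides of (\ref{eq 6.34}) explicitly, to observe that they are represented by generators of $K^{-1}(B; \R/\Z)$ with the \emph{same} underlying $\Z_2$-graded geometric bundle over $B$, and then to reduce the asserted equality to the congruence of odd forms furnished by Theorem \ref{thm 6.1}. Fix once and for all a common choice of data $(H_E, s_E)$, $(H_F, s_F)$ satisfying the AS property for $\DD^{S\otimes E}$, $\DD^{S\otimes F}$, with $\HH_E$, $\HH_F$ trivial geometric bundles, and use it in the construction of $\ind^a_k(\EE, \FF, \alpha)$, in Theorem \ref{thm 6.1}, and in Corollary \ref{coro 6.1}.

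First I would unwind the left-hand side. Since $\ind^a_k(\EE, \FF, \alpha) = (\wt\bkk(\EE), \wt\bkk(\FF), \wt h)$ is a $\Z_2$-graded $k$-cocycle over $B$, the definition (\ref{eq 6.3}) of $T$ on $\Z_2$-graded $k$-cocycles gives
\[
T\big(\ind^a_k(\EE, \FF, \alpha)\big) = \Big(\wt\bkk(\EE) \oplus \wt\bkk(\FF)^{\op},\ \CCS_k\big(\ind^a_k(\EE, \FF, \alpha)\big)\Big),
\]
with $\CCS_k(\ind^a_k(\EE, \FF, \alpha)) = \tfrac1k\CS\big(k\nabla^{\wt\kk(\EE)}, \wt h^*(k\nabla^{\wt\kk(\FF)})\big)$.

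Next I would unwind the right-hand side. By (\ref{eq 6.2}), $T(\EE, \FF, \alpha) = (\VV, \CCS_k(\EE, \FF, \alpha))$ where $\VV$ is the $\Z_2$-graded geometric bundle with $\VV^+ = \EE$, $\VV^- = \FF$; this is a genuine $\Z_2$-graded generator of $K^{-1}(X; \R/\Z)$ because $-d\,\CCS_k(\EE, \FF, \alpha) = \ch(\nabla^E) - \ch(\nabla^F) = \ch(\nabla^{V, +}) - \ch(\nabla^{V, -})$. Applying Corollary \ref{coro 6.1} to this generator with $\EE^+ = \EE$ and $\EE^- = \FF$, and then running the computation (\ref{eq 6.30}) from the proof of Proposition \ref{prop 6.2} — which absorbs the passage from $\bkk$ to $\wt\bkk$ by adjoining trivial elementary $\Z_2$-graded generators, all of which vanish in $K^{-1}(B; \R/\Z)$, and uses $\wt\bkk(\EE) = \bkk(\EE) \oplus \wh\CC^{m+\ell}$, $\wt\bkk(\FF) = \bkk(\FF) \oplus \wh\CC^{\ell}$ from (\ref{eq 6.15}) — I obtain
\[
\ind^a_{\R/\Z}\big(T(\EE, \FF, \alpha)\big) = \Big(\wt\bkk(\EE) \oplus \wt\bkk(\FF)^{\op},\ \int_{X/B}\todd(\nabla^{T^VX}) \wedge \CCS_k(\EE, \FF, \alpha) + \wt\eta(\EE, \bpi, \HH_E, s_E) - \wt\eta(\FF, \bpi, \HH_F, s_F)\Big).
\]

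Finally, both sides of (\ref{eq 6.34}) are now represented by generators of $K^{-1}(B; \R/\Z)$ carrying the identical underlying $\Z_2$-graded geometric bundle $\wt\bkk(\EE) \oplus \wt\bkk(\FF)^{\op}$, together with its induced metrics and connections. Two such generators are equal in $K^{-1}(B; \R/\Z)$ as soon as their odd forms agree modulo $\im(d)$: take $\VV = \WW = 0$ and the identity morphism in Lott's equivalence relation, and use $\CS(\nabla, \nabla) \equiv 0$ from (\ref{eq 2.12}). But the required congruence
\[
\CCS_k\big(\ind^a_k(\EE, \FF, \alpha)\big) \equiv \int_{X/B}\todd(\nabla^{T^VX}) \wedge \CCS_k(\EE, \FF, \alpha) + \wt\eta(\EE, \bpi, \HH_E, s_E) - \wt\eta(\FF, \bpi, \HH_F, s_F)
\]
is precisely (\ref{eq 6.18}) of Theorem \ref{thm 6.1}. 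This proves (\ref{eq 6.34}). The only delicate point is bookkeeping rather than analysis: one must check that the two underlying geometric bundles agree \emph{on the nose}, which forces careful tracking of the trivial summands introduced in (\ref{eq 6.13})--(\ref{eq 6.16}) and, as usual, the symmetric treatment of the case $m < 0$; once Corollary \ref{coro 6.1} and Theorem \ref{thm 6.1} are available there is no additional content.
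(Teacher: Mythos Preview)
Your proposal is correct and follows essentially the same approach as the paper's proof: compute both sides explicitly via (\ref{eq 6.3}), (\ref{eq 6.2}) and the computation (\ref{eq 6.30}), observe that they share the underlying $\Z_2$-graded geometric bundle $\wt\bkk(\EE)\oplus\wt\bkk(\FF)^{\op}$, and invoke Theorem \ref{thm 6.1} to equate the odd forms. Your added remark explaining why equal underlying geometric bundles plus congruent forms force equality in $K^{-1}(B;\R/\Z)$ is a welcome clarification, but otherwise there is no substantive difference.
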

\begin{proof}
Let $(\EE, \FF, \alpha)$ be a $k$-cocycle over $X$. Denote by $\DD^{S\otimes E}$ and $\DD^{S\otimes F}$ the twisted spin$^c$ Dirac operators associated to $(\EE, \bpi)$ and $(\FF, \bpi)$, respectively. Let $(H_E, s_E)$ and $(H_F, s_F)$ satisfy the AS property for $\DD^{S\otimes E}$ and $\DD^{S\otimes F}$, and let $\HH_E$ and $\HH_F$ be geometric bundles defining $\bkk(\EE)$ and $\bkk(\FF)$, respectively. By (\ref{eq 6.17}) and (\ref{eq 6.3}),
\begin{equation}\label{eq 6.35}
\begin{aligned}[b]
T\big(\ind^a_k(\EE, \FF, \alpha)\big)&=T\big(\wt{\bkk}(\EE), \wt{\bkk}(\FF), \wt{h}\big)\\
&=\bigg(\wt{\bkk}(\EE)\oplus\wt{\bkk}(\FF)^{\op}, \CCS_k\big(\wt{\bkk}(\EE), \wt{\bkk}(\FF), \wt{h}\big)\bigg)\\
&=\bigg(\wt{\bkk}(\EE)\oplus\wt{\bkk}(\FF)^{\op}, \CCS_k\big(\ind^a_k(\EE, \FF, \alpha)\big)\bigg).
\end{aligned}
\end{equation}
On the other hand, by (\ref{eq 6.2}) and (\ref{eq 6.30}),
\begin{equation}\label{eq 6.36}
\begin{aligned}[b]
\ind^a_{\R/\Z}\big(T(\EE, \FF, \alpha)\big)&=\bigg(\wt{\bkk}(\EE)\oplus\wt{\bkk}(\FF)^{\op}, \int_{X/B}\todd(\nabla^{T^VX})\wedge\CCS_k(\EE, \FF, \alpha)\\
&\qquad+\wt{\eta}(\EE, \bpi, \HH_E, s_E)-\wt{\eta}(\FF, \bpi, \HH_F, s_F)\bigg).
\end{aligned}
\end{equation}
By the RRG-type formula in odd $\Z/k\Z$ $K$-theory (Theorem \ref{thm 6.1}), the right-hand side of (\ref{eq 6.35}) is equal to that of (\ref{eq 6.36}). Thus (\ref{eq 6.34}) holds.
\end{proof}
\begin{remark}\label{remark 6.2}
In this remark, we explain a difficulty in proving the analytic index $\ind^a_k$ in odd $\Z/k\Z$ $K$-theory induces a well defined group homomorphism.

Let $\pi:X\to B$ be a submersion with closed, oriented and spin$^c$ fibers of even dimension, equipped with a Riemannian and differential spin$^c$ structure $\bpi$, and let $(\EE, \FF, \alpha)$ be a $k$-cocycle over $X$. The $\Z_2$-graded morphism $\wt{h}$ (\ref{eq 6.16}) in the definition of the analytic index $\ind^a_k(\EE, \FF, \alpha)$ in odd $\Z/k\Z$ $K$-theory (\ref{eq 6.17}) is defined in terms of, among other things, the $\Z_2$-graded morphism
$$h_{0\alpha}:\KER\big(\DD^{S\otimes kE; k\Delta_E}\big)\oplus\wh{\VV}_0\to\KER\big(\DD^{S\otimes kE; \Delta_\alpha}_\alpha\big)\oplus\wh{\VV}_1$$
(see (\ref{eq 6.10})), which is obtained by applying a special case (i.e. only one Riemannian and differential spin$^c$ structure $\bpi$ on $\pi:X\to B$) of Proposition \ref{prop 5.2} to the geometric bundles $k\EE$ and $\EE_{k, \alpha}:=\big(kE, kg^E, \alpha^*(k\nabla^F)\big)$. More precisely, $h_{0\alpha}$ is defined to be the composition 
$$h_{0\alpha}:=(P_{s_\alpha r_\alpha}^{-1}\oplus\id_{\wh{\HH}_0})\circ(P_{0\alpha}\oplus\id_{\wh{\HH}_0\oplus\wh{\HH}_1})\circ(P_{s_0r_0}\oplus\id_{\wh{\HH}_1}),$$
where $P_{s_\alpha r_\alpha}$, $P_{0\alpha}$ and $P_{s_0r_0}$ are given by (\ref{eq 5.43}), (\ref{eq 5.27}) and (\ref{eq 5.41}), respectively. Note that $P_{s_0r_0}$ and $P_{s_\alpha r_\alpha}$ depend on $P_{0\alpha}$.

While all the other $\Z_2$-graded morphisms in (\ref{eq 6.10}) are uniquely determined by the objects involved, the $\Z_2$-graded morphism $P_{0\alpha}$ involves auxiliary choices as shown in the proof of (the special case of) Proposition \ref{prop 5.2}. The first choice is the smooth curve $c_b:I\to\wt{B}$ satisfying $c_b(0)=(b, 0)$ and $c_b(1)=(b, 1)$ when defining the $\Z_2$-graded morphism (\ref{eq 2.5}). The second choice, which is more problematic, is the pair $(\kk, \wt{r})$ satisfying the AS property of $\DD^{S\otimes\e}$ associated to $(\bee, \wt{\bpi})$ in the proof of Proposition \ref{prop 5.2}. The reason for making this choice is as follows: given twisted spin$^c$ Dirac operators $\DD^{S\otimes kE}$ and $\DD^{S\otimes kE}_\alpha$ associated to $(k\EE, \bpi)$ and $(\EE_{k, \alpha}, \bpi)$, respectively, consider a smooth curve of twisted spin$^c$ Dirac operators
$$\DD^{S\otimes kE}_t:=\sum_{j=1}^nc(e_j)\nabla^{S(T^VX)\otimes kE}_t,$$
where $\nabla^{S(T^VX)\otimes kE}_t$ is the tensor product of $\nabla^{S(T^VX)}$ and the smooth curve of unitary connections $\nabla^{kE}_t$ joining $k\nabla^E$ and $\alpha^*(k\nabla^F)$ defined by (\ref{eq 2.6}). The pairs $(kH_E, ks_E)$ and $(kH_F, s_\alpha)$ satisfy the AS property for $\DD^{S\otimes kE}$ and $\DD^{S\otimes kE}_\alpha$ by Propositions \ref{prop 5.5} and \ref{prop 5.3}, respectively. However, in contrast to the proof of Proposition \ref{prop 5.1}, there does not appear to be a \emph{canonical} way to construct a pair $(H, s_t)$ satisfying the AS property for $\DD^{S\otimes kE}_t$ \emph{and} joining $(kH_E, ks_E)$ and $(kH_F, s_\alpha)$. This is the reason that, in the proof of Proposition \ref{prop 5.2}, we consider the twisted spin$^c$ Dirac operator $\DD^{S\otimes\e}$ associated to $(\bee, \wt{\bpi})$ whose restrictions to $\e|_{X\times\set{0}}$ and $\e|_{X\times\set{1}}$ are $\DD^{S\otimes kE}$ and $\DD^{S\otimes kE}_\alpha$, respectively, and choose a pair $(\kk, \wt{r})$ satisfying the AS property for $\DD^{S\otimes\e}$.

Now suppose $(\kk_0, \wt{r}_0)$ and $(\kk_\alpha, \wt{r}_1)$ satisfy the AS property for $\DD^{S\otimes\e}$, and let $\wt{\Phi}^0$ and $\wt{\Phi}^1$ be the odd self-adjoint maps associated to $(\kk_0, \wt{r}_0)$ and $(\kk_1, \wt{r}_1)$ as in (\ref{eq 3.3}), respectively. Consider the geometric kernel bundles $\KER\big(\DD^{S\otimes\e; \wt{\Phi}^0}\big)$ and $\KER\big(\DD^{S\otimes\e; \wt{\Phi}^1}\big)$ over $\wt{B}$. As in the proof of Proposition \ref{prop 5.2}, let
\begin{displaymath}
\begin{split}
P^0_{0\alpha}&:\KER\big(\DD^{S\otimes kE; \Phi^0_0}\big)\to\KER\big(\DD^{S\otimes kE; \Phi^0_1}_\alpha\big),\\
P^1_{0\alpha}&:\KER\big(\DD^{S\otimes kE; \Phi^1_0}\big)\to\KER\big(\DD^{S\otimes kE; \Phi^1_1}_\alpha\big)
\end{split}
\end{displaymath}
be the $\Z_2$-graded morphisms (\ref{eq 5.27}). Denote by $\wt{h}^0$ and $\wt{h}^1$ the resulting $\Z_2$-graded morphisms (\ref{eq 6.16}) in the definition of the analytic index $\ind^a_k(\EE, \FF, \alpha)$ in odd $\Z/k\Z$ $K$-theory. If one can show that the $\Z_2$-graded $k$-cocycles 
\begin{displaymath}
\begin{split}
&\big(\KER\big(\DD^{S\otimes E; \wt{\Delta}_E}\big), \KER\big(\DD^{S\otimes F; \wt{\Delta}_F}\big), \wt{h}^0\big),\\
&\big(\KER\big(\DD^{S\otimes E; \wt{\Delta}_E}\big), \KER\big(\DD^{S\otimes F; \wt{\Delta}_F}\big), \wt{h}^1\big)
\end{split}
\end{displaymath}
are equal in $K^{-1}(B; \Z/k\Z)$, i.e. they satisfy (\ref{eq 6.1}), then $\ind^a_k(\EE, \FF, \alpha)$ does not depend on the choices made. We hope to address this difficulty in future work.
\end{remark}

\bibliographystyle{amsplain}
\bibliography{MBib}
\end{document}